\newtheorem{theorem}{Theorem}[section]
\theoremstyle{plain}
\newtheorem{corollary}[theorem]{Corollary}
\newtheorem{lemma}[theorem]{Lemma}
\newtheorem{proposition}[theorem]{Proposition}
\theoremstyle{remark}
\newtheorem{remark}[theorem]{Remark}
\numberwithin{equation}{section}
\newcommand{\tr}{\operatorname{tr}}
\newcommand{\otr}{\operatorname{0-tr}}
\newcommand{\vol}{\operatorname{vol}}
\newcommand{\ovol}{\operatorname{0-vol}}
\newcommand{\re}{\operatorname{Re}}
\newcommand{\im}{\operatorname{Im}}
\newcommand{\res}{\operatorname{Res}}
\newcommand{\rank}{\operatorname{rank}}
\newcommand{\supp}{\operatorname{supp}}
\newcommand{\norm}[1]{\Vert #1 \Vert}
\newcommand{\bnorm}[1]{\left\Vert #1 \right\Vert}
\newcommand{\brak}[1]{\langle #1 \rangle}
\newcommand{\bbR}{\mathbb{R}}
\newcommand{\bbH}{\mathbb{H}}
\newcommand{\bbC}{\mathbb{C}}
\newcommand{\bbZ}{\mathbb{Z}}
\newcommand{\bbN}{\mathbb{N}}
\newcommand{\calR}{\mathcal{R}}
\newcommand{\calH}{\mathcal{H}}
\newcommand{\calD}{\mathcal{D}}
\newcommand{\cinf}{C^\infty}
\newcommand{\del}{\partial}
\newcommand{\barX}{{\bar X}}
\newcommand{\bX}{{\partial\barX}}
\newcommand{\Ds}{(\Delta_g-s(n-s))}
\newcommand{\vep}{\varepsilon}
\newcommand{\tS}{\tilde{S}}
\newcommand{\nh}{\tfrac{n}{2}}
\newcommand{\chr}{\mathbbm{1}}
\newcommand{\bQ}{\mathbf{Q}}
\newcommand{\Ai}{{\rm Ai}}
\newcommand{\Bbkg}{B_n^{(0)}}
\newcommand{\Bvol}{B_P^{(1)}}
\newcommand{\Brad}{B_P^{(2)}}
\begin{document}

\title[Sharp upper bounds]{Sharp upper bounds on resonances for perturbations of
hyperbolic space}
\author[Borthwick]{David Borthwick}
\address{Department of Mathematics and Computer Science, Emory
University, Atlanta, Georgia, 30322, U. S. A.}
\thanks{Supported in part by NSF\ grant DMS-0901937.}
\email{davidb@mathcs.emory.edu}
\date{\today}
\subjclass[2000]{Primary 58J50,35P25; Secondary 47A40}

\begin{abstract}
For certain compactly supported metric and/or potential 
perturbations of the Laplacian on $\mathbb{H}^{n+1}$,
we establish an upper
bound on the resonance counting function with an explicit constant that depends only on
the dimension, the radius of the unperturbed region in $\mathbb{H}^{n+1}$, and the 
volume of the metric perturbation.
This constant is shown to be sharp in the case of scattering by a spherical obstacle.
\end{abstract}

\maketitle
\tableofcontents


\bigbreak
\section{Introduction}\label{intro.sec}

For conformally compact manifolds that are hyperbolic near infinity, we now have 
fairly good control over the growth of the resonance counting function.  Upper and
lower bounds have been obtained for various cases in
\cite{Borthwick:2008, BCHP, CV:2003, GZ:1995b, GZ:1995a, GZ:1997, PP:2001, Perry:2003}.
In this paper we develop techniques which can provide a sharp constant for the
upper bound, and apply these specifically to the case where the manifold is a compactly supported perturbation
of the hyperbolic space $\bbH^{n+1}$.   The techniques are inspired by Stefanov's recent 
proof of sharp upper bounds on the resonance counting function
for perturbations of the Euclidean Laplacian \cite{Stefanov:2006}.

Let $\Delta_0$ denote the positive Laplacian on $\bbH^{n+1}$.
We can write the Green's function associated to $\Delta_0$ explicitly: if 
$R_0(s) := (\Delta_0 + s(n-s))^{-1}$, then
\begin{equation}\label{R0.def}
R_0(s;z,z') =  \frac{2^{-2s-1} \pi^{-\nh} \Gamma(s)}{\Gamma(s-\nh+1)} \sigma^{-s} 
F(s,s-\tfrac{n-1}2; 2s-n+1; \sigma^{-1}),
\end{equation}
where $F$ is the Gauss hypergeometric function and $\sigma := \cosh^2 (\tfrac12 d(z,z'))$.  From this expression
we quickly deduce that $R_0(s)$ admits an analytic extension to $s\in \bbC$ if $n$ is even,
and a meromorphic extension with poles at $s = -k$ for $k =0,1,2,\dots$ if $n$ is odd.  In the latter
case the multiplicities of the poles are given by 
\begin{equation}\label{m0.k}
m_0(-k) = (2k+1) \frac{(k+1)\dots(k+n-1)}{n!}.
\end{equation}
Let $\calR_0$ denote the resonance set for $\bbH^{n+1}$ (empty for $n$ even),
with resonances repeated according to multiplicity.
The associated resonance counting function is defined by
$$
N_0(t) := \#\{ \zeta\in \calR_0:\>|\zeta - \nh| \le t\}.
$$
For $n$ odd, an asymptotic for $N_0(t)$ is easily deduced by integrating (\ref{m0.k}).
For later usage, we introduce the constant
$$
\Bbkg := \begin{cases}
\frac{2}{(n+1)!} & n \text{ odd},\\
0 & n \text{ even}.
\end{cases}
$$
The resonance counting function asymptotics for $\bbH^{n+1}$ are then summarized by
\begin{equation}\label{N0.asymp}
N_0(t) \sim \Bbkg t^{n+1},
\end{equation}
as $t \to \infty$.

The main result of this paper concerns
the resonance counting function $N_P(t)$ for $P$ a
compactly supported perturbation of $\Delta_0$.  To describe the class of perturbations
precisely, let 
$$
K_0 := \overline{B(0; r_0)} \subset \bbH^{n+1}
$$   
for some $r_0 >0$.
We assume that $(X,g)$ is a smooth Riemannian manifold, possible with boundary, such that
for some compact $K \subset X$, we have
$$
(X-K, g) \cong (X_0 - K_0, g_0).
$$
In other words, $(X,g)$ agrees with $\bbH^{n+1}$ near infinity.
Note that $X$ is allowed to have a more complicated topology than $\bbH^{n+1}$,
as illustrated in Figure~\ref{Xg}.
\begin{figure} 
\psfrag{r}{$r_0$}
\psfrag{K}{$K$}
\psfrag{H}{$\bbH^{n+1}$}
\begin{center}  
\includegraphics{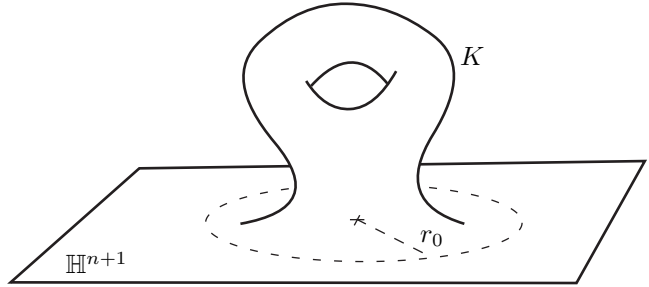} 
\end{center}
\caption{Compactly supported perturbation of $\bbH^{n+1}$, with $K$ replacing
the closed ball $K_0$.}\label{Xg}
\end{figure}

Let $\Delta_g$ denote the Laplacian on $(X,g)$, and $V \in \cinf_0(X)$ with
$\supp(V) \subset K$.
We then define the perturbed operator
$$
P := \Delta_g + V,
$$
where some self-adjoint boundary condition is imposed if $X$ has a boundary.
Since $R_0(s)$ functions as a good parametrix for $R_P(s): = (P-s(n-s))^{-1}$
near infinity, it is straightforward to prove meromorphic continuation of $R_P(s)$.
We can thus define the resonance set $\calR_P$, with resonances repeated
according to multiplicity, and the associated counting function
$$
N_P(t) := \#\{ \zeta\in \calR_P:\>|\zeta - \nh| \le t\}.
$$

The arguments of Cuevas-Vodev \cite{CV:2003} and Borthwick 
\cite{Borthwick:2008} are easily extended to show that 
$$
N_P(t) = O(t^{n+1}).
$$
Our goal in this paper is to refine this estimate by producing an explicit constant $B_P$ for this bound,
which is sharp in the sense that $N_P(t) \sim B_P t^{n+1}$ holds in at least some cases.

As in Stefanov's work \cite{Stefanov:2006}, such a result requires a slightly 
regularized version of the counting function.  
The basis of our estimate is the following relative counting formula:
\begin{equation}\label{rcf}
\int_0^a \frac{N_P(t) - N_0(t)}{t}\>dt = 2\int_0^a \frac{\sigma(t)}{t}\>dt + 
\frac{1}{2\pi} \int_{-\frac\pi2}^{\frac\pi2} \log |\tau(\nh+ ae^{i\theta})|\>d\theta + O(\log a),
\end{equation}
with $\tau(s)$ the relative scattering determinant for $P$ and $\sigma(t)$ the corresponding
relative scattering phase.  This formula holds for a general class of
background manifolds $(X_0, g_0)$ and for much more general perturbations; 
see Proposition~\ref{relcount}.

From the relative counting formula (\ref{rcf}), the role that the asymptotic 
(\ref{N0.asymp}) for $N_0(t)$ will play is clear.  
The contribution from the relative scattering phase $\sigma(t)$ is similarly easy to account for,
because it satisfies a Weyl-type asymptotic as $t \to \infty$,
\begin{equation}\label{sig.weyl}
\sigma(t) = \frac12 \Bvol\>t^{n+1} + O(t^n),
\end{equation}
where
$$
\Bvol :=  \frac{2 (4\pi)^{-\frac{n+1}2}}{\Gamma(\frac{n+3}2)} \bigl[\vol(K,g) - \vol(K_0,g_0)\bigr].
$$
It is for this result that we must require smoothness of $g$ and $V$.   
In various asymptotically hyperbolic settings, the scattering
phase asymptotic was established by Guilop\'e-Zworski \cite{GZ:1997}, Guillarmou \cite{Gui:2005b},
and Borthwick \cite{Borthwick:2008}.  By adapting of the arguments from \cite{Borthwick:2008},
we can extend the result to the class of perturbations considered here, for a general
class of background manifolds $(X_0, g_0$).

Once we have the scattering phase asymptotic, the final step in estimating the right-hand side of (\ref{rcf}) is to
study the integral of $\log |\tau(s)|$ over a half-circle.   It is here that we specialize to $\bbH^{n+1}$
as the background space.
With a combination of singular value techniques and asymptotic analysis of Legendre functions, we 
produce a bound
\begin{equation}\label{logt.est}
\frac{n+1}{2\pi} \int_{-\frac\pi2}^{\frac\pi2} \log |\tau(\nh+ ae^{i\theta})|\>d\theta
\le \Brad a^{n+1} + o(a^{n+1}),
\end{equation}
with
\begin{equation}\label{brad.def}
\Brad := \frac{n+1}{\pi \Gamma(n)} \int_{-\frac\pi2}^{\frac\pi2} \int_0^\infty  
\frac{[H(x e^{i\theta}, r_0)]_+}{x^{n+2}}\>dx\>d\theta,
\end{equation}
where $[\cdot]_+$ denotes the positive part and
\begin{equation}\label{Hdef}
\begin{split}
H(\alpha, r) & :=   \re \left[ 
2\alpha \log \Bigl( \alpha \cosh r +  \sqrt{1 + \alpha^2 \sinh^2 r} \Bigr) - \alpha \log (\alpha^2-1) \right] \\
&\qquad + \log \left| \frac{\cosh r - \sqrt{1 + \alpha^2 \sinh^2 r}}{\cosh r + \sqrt{1 + \alpha^2 \sinh^2 r}} \right|.
\end{split}
\end{equation}
The $r_0$-dependence of $\Brad$ is approximated by $\Brad \approx c_n e^{n r_0}$
for $r_0$ large, so that $\Brad$ is roughly proportional to $\vol(K_0,g_0)$.

The estimate (\ref{logt.est}) leads directly to our main result:
\begin{theorem}\label{main.thm}
For $P=\Delta_g +V$, a compactly supported perturbation of the Laplacian $\Delta_0$ on $\bbH^{n+1}$
as described above, we have
\begin{equation}\label{sharp.est}
(n+1) \int_0^a \frac{N_P(t)}{t}\>dt  
\le B_P a^{n+1} + o(a^{n+1}),
\end{equation}
where $B_P := \Bbkg + \Bvol + \Brad$.
\end{theorem}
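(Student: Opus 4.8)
The plan is to assemble Theorem~\ref{main.thm} directly from the three ingredients already set up in the introduction: the relative counting formula (\ref{rcf}), the Weyl asymptotic (\ref{sig.weyl}) for the relative scattering phase, the asymptotic (\ref{N0.asymp}) for $N_0(t)$, and the half-circle estimate (\ref{logt.est}). First I would rewrite (\ref{rcf}) as
\begin{equation*}
\int_0^a \frac{N_P(t)}{t}\>dt = \int_0^a \frac{N_0(t)}{t}\>dt + 2\int_0^a \frac{\sigma(t)}{t}\>dt + \frac{1}{2\pi}\int_{-\frac\pi2}^{\frac\pi2} \log|\tau(\nh + ae^{i\theta})|\>d\theta + O(\log a),
\end{equation*}
and then multiply through by $n+1$. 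The strategy is to evaluate the leading-order contribution of each of the first three terms and check that they sum to $(\Bbkg + \Bvol + \Brad)a^{n+1}$ up to $o(a^{n+1})$.

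For the $N_0$ term, (\ref{N0.asymp}) gives $N_0(t) \sim \Bbkg t^{n+1}$, so $\int_0^a N_0(t)/t\,dt \sim \Bbkg a^{n+1}/(n+1)$, and multiplying by $n+1$ yields the contribution $\Bbkg a^{n+1}$. (A small remark is needed here that the $O$-error in (\ref{N0.asymp}), whatever its exact form, integrates to $o(a^{n+1})$ after division by $t$; for $n$ even this term is simply zero.) For the scattering-phase term, (\ref{sig.weyl}) gives $\sigma(t) = \tfrac12 \Bvol t^{n+1} + O(t^n)$, hence $2\int_0^a \sigma(t)/t\,dt = \Bvol a^{n+1}/(n+1) + O(a^n)$, and after multiplying by $n+1$ this contributes $\Bvol a^{n+1} + o(a^{n+1})$. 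The $O(t^n)$ remainder, divided by $t$ and integrated, gives $O(a^n)$, which is absorbed. The $\log|\tau|$ term, multiplied by $n+1$, is bounded above by $\Brad a^{n+1} + o(a^{n+1})$ directly by (\ref{logt.est}). Finally, the $(n+1)O(\log a)$ error is $o(a^{n+1})$. Adding the three leading terms gives the upper bound $B_P a^{n+1} + o(a^{n+1})$ with $B_P = \Bbkg + \Bvol + \Brad$, as claimed.

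Strictly speaking, this theorem is a corollary of the three cited results — (\ref{N0.asymp}), (\ref{sig.weyl}), and (\ref{logt.est}) — and the relative counting formula; the only genuine content at this stage is the bookkeeping of error terms and the observation that one passes from an equality for $N_P - N_0$ and a two-sided asymptotic for $\sigma$ to a one-sided bound because (\ref{logt.est}) is only an inequality. The main obstacle, therefore, is not in this final assembly but in the proofs of the supporting results that occupy the body of the paper: establishing (\ref{rcf}) in the required generality (Proposition~\ref{relcount}), proving the Weyl asymptotic (\ref{sig.weyl}) for the relative scattering phase under the smoothness hypotheses on $g$ and $V$ by adapting the heat-trace/complex-scaling arguments of \cite{Borthwick:2008}, and — the technical heart — deriving the sharp half-circle estimate (\ref{logt.est}) with the explicit constant $\Brad$ of (\ref{brad.def}) via singular-value bounds on the relevant operators combined with uniform asymptotics of Legendre functions. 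Here, in writing out Theorem~\ref{main.thm}, I would simply cite those results and present the elementary integration and error-term accounting above.
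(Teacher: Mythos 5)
Your assembly is exactly the paper's proof: the paper states that Theorem~\ref{main.thm} follows immediately from the combination of Proposition~\ref{relcount}, Corollary~\ref{scphase.cor} (which gives (\ref{sig.weyl})), and Theorem~\ref{inttau.prop} (which gives (\ref{logt.est})), together with (\ref{N0.asymp}), and your error-term bookkeeping carries this out correctly. You also correctly locate the real content in the supporting results rather than in this final step.
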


To highlight the dependence of $B_P$ on $P$, we note that the constant $\Bbkg$ is dimensional,
$\Bvol$ depends on $r_0$ and on $(K,g)$ only through its volume, and $\Brad$ depends only on $r_0$.
None of these components depends on $V$.  

The factor $(n+1)$ is included in the formula (\ref{sharp.est}) so that an asymptotic
result of the same form as (\ref{sharp.est}) would be equivalent to 
$N_P(t) \sim B_P t^{n+1}$, with the same constant.
Note that the only missing ingredient needed to establish such an asymptotic result
is a lower bound of the same form as (\ref{logt.est}).  

To demonstrate the sharpness of Theorem~\ref{main.thm},
we consider explicitly the case of
scattering by a spherical obstacle in $\bbH^{n+1}$, for which $X = \bbH^{n+1}-
B(0; r_0)$, and $P = \Delta_0|_X$ with Dirichlet boundary conditions on $\del X$.
Figure~\ref{dresplot} shows a sample resonance set for a spherical obstacle in $\bbH^2$. 
\begin{figure} 
\psfrag{10}{$10$}
\psfrag{-10}{$-10$}
\begin{center}  
\includegraphics{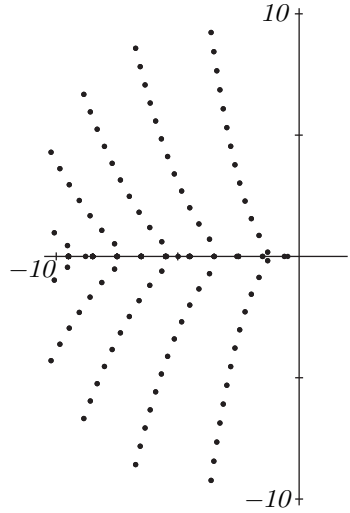} 
\end{center}
\caption{Resonances for the spherical obstacle of radius $r_0=1$ in $\bbH^2$.  All points off the real axis
have multiplicity two;  on the real axis the multiplicities are more complicated.}\label{dresplot}
\end{figure}
\begin{theorem}\label{dir.thm}
If $P$ is the Dirichlet Laplacian on $\bbH^{n+1} - B(0; r_0)$, then
$$
N_P(t) \sim B_P t^{n+1}.
$$
\end{theorem}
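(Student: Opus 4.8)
The plan is to supply the single missing ingredient identified right after Theorem~\ref{main.thm} --- a lower bound on the half-circle integral of $\log|\tau|$ matching \eqref{logt.est} --- and then to pass from the resulting averaged estimate to the pointwise asymptotic for $N_P$ by a Tauberian argument. Because the domain $\bbH^{n+1}-B(0;r_0)$ and the operator $P=\Delta_0|_X$ are invariant under rotations about the origin, everything here can be made completely explicit by separation of variables. Decomposing $L^2$ into angular-momentum sectors indexed by $\ell=0,1,2,\dots$, each with multiplicity $d_\ell=\dim\calH_\ell(S^n)\sim\tfrac{2}{(n-1)!}\,\ell^{n-1}$, the radial problem in the $\ell$-th sector is a second-order ordinary differential equation whose solutions are Legendre functions; imposing the Dirichlet condition at $r=r_0$ yields an explicit formula for the $\ell$-th diagonal entry $S_\ell(s)$ of the relative scattering matrix as a ratio of such functions evaluated at $r_0$, and the relative scattering determinant factors as
\[
\log|\tau(s)|=\sum_{\ell\ge 0} d_\ell\,\log|S_\ell(s)|.
\]

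The heart of the matter is to show that, on the circle $s=\nh+ae^{i\theta}$, this sum reproduces the bound \eqref{logt.est} asymptotically from below, i.e.
\[
\frac{n+1}{2\pi}\int_{-\frac\pi2}^{\frac\pi2}\log|\tau(\nh+ae^{i\theta})|\,d\theta\ \ge\ \Brad\,a^{n+1}-o(a^{n+1}).
\]
For fixed $\theta$ I would apply uniform asymptotics of the Legendre functions as $a\to\infty$ with $\ell/a$ held fixed, expecting the leading behavior
\[
\log|S_\ell(\nh+ae^{i\theta})|\ \approx\ \ell\,\bigl[H\bigl(\tfrac{a}{\ell}e^{i\theta},r_0\bigr)\bigr]_+ ,
\]
with $H$ the function of \eqref{Hdef}: the profile $H$ is clipped to its positive part because angular momenta whose classical turning radius exceeds $r_0$ do not interact with the obstacle and satisfy $|S_\ell|\approx 1$, contributing negligibly. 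Treating $\sum_\ell d_\ell(\cdots)$ as a Riemann sum and substituting $x=a/\ell$ converts it into the $x$-integral appearing in \eqref{brad.def}, and integrating in $\theta$ then gives exactly $\Brad a^{n+1}$ to leading order. The delicate points are the uniformity of the Legendre asymptotics in $\theta\in[-\tfrac\pi2,\tfrac\pi2]$ --- in particular as $s$ approaches the real axis --- and the analysis of the transition region near the turning radius, where one must show that the error introduced by clipping the profile is genuinely $o(a^{n+1})$.

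Granting this lower bound, I would feed it into the relative counting formula \eqref{rcf}, together with the background asymptotic \eqref{N0.asymp} and the scattering-phase Weyl law \eqref{sig.weyl}, to obtain
\[
(n+1)\int_0^a\frac{N_P(t)}{t}\,dt\ \ge\ B_P\,a^{n+1}-o(a^{n+1}),
\]
which, combined with Theorem~\ref{main.thm}, yields $(n+1)\int_0^a\tfrac{N_P(t)}{t}\,dt=B_P a^{n+1}+o(a^{n+1})$. Finally, since $N_P$ is non-decreasing, comparing this averaged quantity at the arguments $a$ and $\lambda a$ and letting $\lambda\to 1^{\pm}$ upgrades it to $N_P(t)\sim B_P t^{n+1}$. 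I expect the main obstacle to be the lower bound on $\int\log|\tau|$: one must show that the over-counting built into the singular-value estimates that produce \eqref{logt.est} is, for the spherical obstacle, asymptotically lossless --- equivalently, that the exact scattering coefficients $S_\ell$ saturate the Legendre-function growth estimates used there.
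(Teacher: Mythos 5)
Your proposal follows essentially the same route as the paper: diagonalize the relative scattering matrix in spherical harmonics, use uniform Legendre asymptotics to show that $\log|[S_PS_0^{-1}]_l|$ is governed by $k[H(\alpha,r_0)]_+$, sum to obtain the matching lower bound on $\int\log|\tau(\nh+ae^{i\theta})|\,d\theta$, and combine with Proposition~\ref{relcount}, Corollary~\ref{scphase.cor}, and Theorem~\ref{inttau.prop}. The only point you make explicit that the paper leaves implicit is the final Tauberian step converting $(n+1)\int_0^a N_P(t)t^{-1}dt\sim B_Pa^{n+1}$ into $N_P(t)\sim B_Pt^{n+1}$ via monotonicity, which is correct and needed.
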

\begin{figure} 
\psfrag{10}{$10$}
\psfrag{400}{$400$}
\psfrag{800}{$800$}
\psfrag{rh}{$r_0 = \tfrac12$}
\psfrag{ro}{$r_0 = 1$}
\psfrag{rt}{$r_0 = 2$}
\begin{center}  
\includegraphics{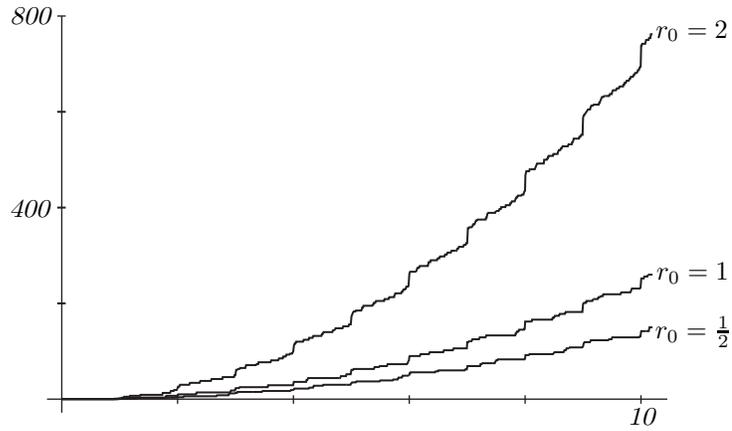} 
\end{center}
\caption{Resonance counting functions for spherical obstacles of radius $r_0$ in $\bbH^2$.}\label{Nobst}
\end{figure}
Figure~\ref{Nobst} shows that resonance counting functions $N_P(t)$ for spherical obstacles in $\bbH^2$ with several 
values of $r_0$.  These graphs are based on exact computation of the resonances.  
The approximate values of the asymptotic constants for these cases are
$$
B_P \approx \begin{cases}  1.45 & r_0 = \tfrac12, \\
2.61 & r_0 = 1, \\
7.50 & r_0 = 2.
 \end{cases}
$$
Already at $t=10$ we can see that the behavior of $N_P(t)$ is consistent with the predictions
of Theorem~\ref{dir.thm}.

The paper is organized as follows.  In \S\ref{bbox.sec} we develop basic
spectral results, such as meromorphic continuation of the resolvent,
in a very general ``black box'' perturbation setting.
In \S\ref{scatt.sec} we narrow the context somewhat, in order to
establish a nice factorization formula for the relative scattering determinant, from
which the relative counting formula (\ref{rcf}) follows.  Another application of the factorization is the 
Poisson summation formula for resonances, which leads to (\ref{sig.weyl}).  
The process of estimating the scattering determinant begins in \S\ref{pkernel.sec},
with a formula that expresses this determinant in terms of the Poisson kernel
on $\bbH^{n+1}$.  In \S\ref{tau.est.sec} we exploit this relation to prove 
(\ref{logt.est}) and complete the proof of Theorem~\ref{main.thm}.
A few explicit spherically symmetric examples are considered in~\S\ref{ex.sec},
which contains the proof of Theorem~\ref{dir.thm}.
Finally, the asymptotic analysis of Legendre functions
that is needed for \S\ref{tau.est.sec} and \S\ref{ex.sec} is developed in the Appendix. 

\vskip12pt\noindent
\textbf{Acknowledgment.}   I would like to thank to Plamen Stefanov for suggesting the extension
of his results to the hyperbolic setting.   That suggestion occurred during a workshop at 
the Banff International Research Station, and subsequently part of the work was carried out during a stay at 
the Centre International des Rencontres Math\'ematiques in Luminy.  I am grateful to both
of these institutions for their support, and also thank the Emory ICIS for additional travel support for
the Luminy meeting.

\section{Black box perturbations}\label{bbox.sec}

In geometric scattering theory, the term ``black box'' refers to 
a general class of perturbations of the Euclidean Laplacian in $\bbR^n$ introduced by
Sj\"ostrand-Zworski \cite{SZ:1991}.  Although in standard usage this terminology is specific 
to the Euclidean setting, the same abstract formulation can be adapted to other settings.  
In this section, we will discuss black box perturbations in an asymptotically hyperbolic context.  
Our goal is to set up the definition of resonances by demonstrating
meromorphic continuation of the resolvent, and then to prove a global estimate of the
counting function.  It makes sense to do this in a general setting, since only minor changes are 
required to adapt previously published arguments. This section essentially amounts to
a review of known results.

An asymptotically hyperbolic metric on $(X_0,g_0)$ admits, by definition, a compactification $\barX_0$ 
with boundary defining function $\rho$ such that 
$(\barX_0, \rho^2 g_0)$ is a smooth, compact Riemannian manifold with boundary and
$|d\rho|_{\rho^2 g_0} = 1$ on $\bX_0$.  We will assume that $(X_0, g_0)$ is \textit{even} in the sense
introduced by Guillarmou \cite{Gui:2005a}.  This means
that the Taylor series of $\rho^2 g_0$ at $\rho=0$ contains only even powers of $\rho$.  
Under this assumption the resolvent 
$R_0(s) := (\Delta_{g_0}-s(n-s))^{-1}$ admits a meromorphic to $s \in \bbC$, with poles of finite rank
\cite{Gui:2005a, MM:1987}.

Appropriating the terminology from the Euclidean case, we define a class of
perturbations of $\Delta_{g_0}$ as follows.  Given a compact $K_0 \subset X_0$, we 
consider the Hilbert space
$$
\calH = \calH_0 \oplus L^2(X_0 - K_0, dg_0),
$$
where $\calH_0$ is some abstract Hilbert space filling in for $L^2(K_0, dg_0)$.
On $\calH$ we consider a self-adjoint operator $P$ 
with domain $\calD \subset \calH$, satisfying the following assumptions:
\begin{enumerate}
\item  $\calD|_{X_0 - K_0} \subset H^2(X_0 - K_0, dg_0)$.  If 
$u \in H^2(X_0 - K_0, dg_0)$ and $u$ vanishes near $K_0$, then $u \in \calD$.
\item  For $u \in \calD$,
$$
(Pu)|_{X_0 - K_0} = \Delta_{g_0} (u|_{X_0 - K_0}).
$$
\item  As a map $\calH\to \calH$, $\chr_{K_0} (P+ i)^{-1}$ is compact.
\end{enumerate}
Here the notations $\chr_{K_0}: u \mapsto u|_{K_0}$ and $\chr_{X_0 - K_0}: u \mapsto u|_{X_0 - K_0}$
denote the orthogonal projections $\calH \to \calH_0$ and $\calH \to L^2(X_0 - K_0, dg_0)$, respectively.

We will refer to an operator $P$ defined as above as a \textit{black box}
perturbation of $\Delta_{g_0}$.  Given that meromorphic continuation of the resolvent is
already known for $\Delta_{g_0}$, it is relatively easy to extend this result to $P$.

\begin{theorem}\label{pcontinue}
Let $(X_0, g_0)$ be an even asymptotically hyperbolic manifold and $P$
a black box perturbation of $\Delta_{g_0}$.
The resolvent $R_P(s) := (P - s(n-s))^{-1}$
admits for any $N$ a meromorphic continuation to $\re s > -N + \tfrac{n}2$
as an operator $\rho^{N} \calH \to \rho^{-N} \calH$, with poles of finite rank.
\end{theorem}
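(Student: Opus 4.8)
The plan is to build a parametrix for $R_P(s)$ from two pieces — the meromorphically continued free resolvent $R_0(s)$ near infinity, where $P$ agrees with $\Delta_{g_0}$, and the resolvent of a fixed compactly-supported reference operator near $K_0$ — exactly as in the Euclidean scheme of Sj\"ostrand--Zworski \cite{SZ:1991}. Fix $N$; the essential input is that $R_0(s) = (\Delta_{g_0} - s(n-s))^{-1}$ continues meromorphically to $\re s > -N + \nh$ as a family $\rho^N L^2(X_0, dg_0) \to \rho^{-N} L^2(X_0, dg_0)$ with finite-rank poles. I would also fix a reference operator $P_\sharp$, obtained by capping off the asymptotically hyperbolic end of $X_0$ at a large radius and keeping $P$ over a neighborhood of $K_0$; by assumption (3) this $P_\sharp$ has compact resolvent, so $R_\sharp(s) := (P_\sharp - s(n-s))^{-1}$ is meromorphic on $\bbC$ with finite-rank poles, $P_\sharp = P$ away from the capped end, and (choosing the cap so that $P_\sharp$ is bounded below) $\norm{R_\sharp(s)} \to 0$ as $\re s \to +\infty$.

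Next I would choose cutoffs $\chi_0, \chi_1, \chi_2 \in \cinf_0(X_0)$, each $\equiv 1$ near $K_0$ with $\chi_{j+1} \equiv 1$ on a neighborhood of $\supp \chi_j$, so that the $d\chi_j$ are supported in $X_0 - K_0$ (where $P = \Delta_{g_0}$) and $P_\sharp = P$ on a neighborhood of $\supp \chi_2$, and set
$$
Q(s) := (1-\chi_1)\, R_0(s)\, (1-\chi_0) \;+\; \chi_2\, R_\sharp(s)\, \chi_1,
$$
a meromorphic family $\rho^N \calH \to \rho^{-N}\calH$ with finite-rank poles. The central computation is to apply $P - s(n-s)$ to $Q(s)$. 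Using $(1-\chi_1)(1-\chi_0) = 1 - \chi_1$, $\chi_2\chi_1 = \chi_1$, the resolvent identities for $R_0$ and $R_\sharp$, and that $[P,\chi_j] = [\Delta_{g_0},\chi_j]$, the leading terms collapse to $(1-\chi_1) + \chi_1 = I$ — the potentially large $s(n-s)$ term generated in the second piece cancelling precisely because $R_\sharp$ carries the \emph{same} spectral parameter $s$ — leaving
$$
(P - s(n-s))\, Q(s) = I + K(s),
$$
where $K(s) := -[\Delta_{g_0},\chi_1] R_0(s)(1-\chi_0) + [\Delta_{g_0},\chi_2] R_\sharp(s)\chi_1$. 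Each summand of $K(s)$ is a first-order differential operator whose coefficients are supported in the fixed compact set $\supp(d\chi_1) \cup \supp(d\chi_2)$, composed with a resolvent that gains two derivatives there; hence $K(s)$ is compact on $\rho^{-N}\calH$ — by Rellich, together with the fact that $\phi R_\sharp(s) \psi$ is compact for $\phi, \psi \in \cinf_0(X_0)$ since $P_\sharp$ lives on a compact manifold — and it depends holomorphically on $s$ off the poles of $R_0$ and $R_\sharp$.

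The rest is analytic Fredholm theory. One checks that $Q(s)$ maps into $\rho^{-N}\calD$ (the first term vanishes near $K_0$ and is locally $H^2$, hence in $\calD$ by assumption (1); the second lies in $\calD$ since $P_\sharp = P$ near $K_0$ and $\calD$ is stable under multiplication by $\cinf_0$-functions), so that wherever the self-adjoint resolvent $R_P(s)$ already exists one has $R_P(s)(I + K(s)) = Q(s)$. Since $\norm{K(s)} \to 0$ as $\re s \to +\infty$, $I + K(s)$ is invertible there, so $R_P(s) = Q(s)(I + K(s))^{-1}$; the analytic Fredholm theorem then continues $(I + K(s))^{-1}$ — working on the complement of the discrete, finite-rank poles of $R_0$ and $R_\sharp$, past which one passes in the usual way — to a meromorphic family on $\re s > -N + \nh$ with finite-rank residues. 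As $Q(s)$ has only finite-rank poles there, so does $R_P(s)$, and uniqueness of meromorphic continuation makes the conclusion independent of $N$.

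I expect the one genuinely non-routine point — the rest being essentially transcribed from the Euclidean case — to be the construction of the reference operator $P_\sharp$ compatibly with the black-box axioms, together with the verification that $\norm{R_\sharp(s)} \to 0$ as $\re s \to +\infty$. That decay is exactly what supplies the point of invertibility needed to start the Fredholm argument, and it is the reason for using a matched-parameter reference resolvent rather than a fixed-parameter resolvent $R_P(s_0)$, whose contribution to $K(s)$ would instead grow like $|s(n-s)|$.
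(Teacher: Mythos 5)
Your argument is correct in outline but takes a genuinely different route from the paper. You use a matched-parameter reference resolvent $R_\sharp(s)=(P_\sharp-s(n-s))^{-1}$ for a capped-off operator, as in the original Sj\"ostrand--Zworski scheme, so that the spectral parameters cancel and the error $K(s)$ consists only of commutator terms. The paper instead uses the interior resolvent $(P_1-z_0)^{-1}$ of $P$ restricted to a compact set $K_1$ with Dirichlet conditions, at a \emph{fixed} auxiliary parameter $z_0$; this produces the extra error term $L_2(s,z_0)=(s(n-s)-z_0)\chi_1(P_1-z_0)^{-1}\chi$, which grows with $s$ but is still compact, and whose norm is tamed by taking $\im z_0$ large. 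Your version buys a cleaner error term at the cost of importing the poles of $R_\sharp(s)$ into $K(s)$ (harmless, as you note, since they are a discrete set of finite-rank poles); the paper's version keeps $L(s)$ holomorphic wherever $R_0(s)$ is, and --- more to the point --- the fixed-parameter form with $z_0$ varying as $\gamma N(n-\gamma N)$ is exactly what is recycled later in the proof of Theorem~\ref{upper.bound}, which is why the paper sets it up that way here.

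One step in your writeup does not survive scrutiny as stated: you invoke ``(choosing the cap so that $P_\sharp$ is bounded below)'' to get $\norm{R_\sharp(s)}\to 0$ as $\re s\to+\infty$. In Theorem~\ref{pcontinue} the black box $P$ is \emph{not} assumed bounded below (that is only extra assumption (i), imposed later for the counting bound), and since $P_\sharp$ contains the black box near $K_0$, no choice of cap can force it to be bounded below. The fix is the one the paper uses: by self-adjointness, $\norm{R_\sharp(s)}\le |\im (s(n-s))|^{-1}$, so choose the starting point $s$ in the first quadrant far from both the real axis and the line $\re s=\nh$, which makes $|\im(s(n-s))|$ large and hence $\norm{K(s)}<1$ without any semiboundedness. (A smaller slip: $K(s)$ should be shown compact on $\rho^N\calH$, the space on which $I+K(s)$ acts in the identity $R_P(s)=Q(s)(I+K(s))^{-1}$, not on $\rho^{-N}\calH$; since both summands of $K(s)$ have range supported in a fixed compact set, this is immediate.) With these corrections your proof is complete.
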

\begin{proof}
The resolvent $R_g(s)$ serves as a suitable parametrix for $R_P(s)$ near the boundary.
Let $\chi_0, \chi, \chi_1 \in \cinf_0(X)$ be cutoff functions equal to 1 on $K_0$,
such that $\chi=0$ on the support of $\chi_1$ and $\chi_0=1$ on the support of $\chi$.  
Let $K_1 := \supp \chi_1$, and define
$$
P_1 := P|_{K_1},
$$
as an operator on $\calH|_{K_1}$ with Dirichlet boundary conditions imposed on $\del K_1$,
so that $P_1$ is self-adjoint.  We can naturally regard $\chi_1(P_1-z)^{-1}\chi$ 
as an operator on $\calH$.

Then for $z_0$ such that $z_0 \notin \sigma(P)$ we set
\begin{equation}\label{Rparametrix}
M(s) = \chi_1 (P_1 - z_0)^{-1} \chi + (1-\chi_0) R_0(s) (1-\chi).
\end{equation}
Then
\begin{equation}\label{pmik}
(P - s(n-s)) M(s)  = I - L(s),
\end{equation}
where $L(s) = L_1(s_0) + L_2(s,z_0) + L_3(s)$ with
\[
\begin{split}
L_1(z_0)& := -[\Delta_{g}, \chi_1] (P_1 - z_0)^{-1} \chi, \\
L_2(s, z_0) & := (s(n-s) - z_0) \chi_1 (P_1 - z_0)^{-1} \chi, \\
L_3(s) & := [\Delta_{g}, \chi_0] R_0(s) (1-\chi).
\end{split}
\]
Our goal is to prove that $L(s)$ is compact and then apply the analytic Fredholm theorem.

Consider first the error term $L_1(s_0)$, which we can write as
$$
L_1(z_0) =  -[\Delta_{g}, \chi_1] \chr_{X-K} (P_1 - z_0)^{-1} \chi
$$
By definition, 
$\chr_{X_0-K_0} (P_1 - z_0)^{-1}$ maps $\calH$ to $\calD|_{X_0-K_0}$ and we have assumed
that the latter is contained in $H^2(X_0-K_0, dg_0)$.  
Since $[\Delta_{g_0}, \chi_0]$ is first order with smooth coefficients
whose compact support is contained in ${X_0-K_0}$, we see that $[\Delta_{g}, \chi_0]$ is compact
as a mapping $H^2(X_0-K_0, dg_0) \mapsto L^2(X_0-K_0, dg_0)$.  
Hence $L_1(z_0)$ is compact  $\calH \to \calH$.

The black box assumption that $\chr_{K_0} (P-i)^{-1}$ is compact implies that $\chr_{K_0} (P_1-i)^{-1}$
is compact on $\calH|_{K_1}$.  And the resolvent identity
\begin{equation}\label{res.id}
(P_1-z)^{-1} = (P_1-i)^{-1} \Bigl[ I + (z-i) (P_1-z)^{-1}  \Bigr]
\end{equation}
then shows that $L_2(s, z_0)$ is compact on $\calH$.
Finally, the error term $L_3(s)$ has a smooth kernel contained in
$\rho^\infty {\rho'}^s \cinf(X\times X)$.  This implies that for 
$N>0$, $L_3(s)$ is a compact operator
on $\rho^N \calH$ for $\re s > -N + \tfrac{n}2$.  

After adding the pieces together, these arguments show that 
$L(s)$ is compact on $\rho^N\calH$ for $\re s \ge -N + \nh$.
Using the self-adjointness of $P_1$ and the standard resolvent estimate, 
\begin{equation}\label{std.res}
\norm{ (P_1 - z)^{-1}} \le \frac{1}{\text{dist}( z, \sigma(P))}
\end{equation}
we can insure that $\norm{ (P_1 - z_0)^{-1}}$ is small by choosing $\im z_0$ large.   
Similarly, we can make $\norm{R_g(s)}$ small by choosing $s$ in the first quadrant sufficiently far
from the real axis and the line $\re s = \nh$.   Thus for some $s, z_0$ we have
$\norm{L(s)} < 1$, implying that $I-L(s)$ is invertible at this point.
The analytic Fredholm theorem then applies to define $(I - L(s))^{-1}$ 
meromorphically on $\rho^N \calH$ for $\re s > -N + \tfrac{n}2$.  The claimed result follows
from
$$
R_P(s) = M(s) (I-L(s))^{-1},
$$
because $M(s)$ maps $\rho^{N} \calH \to \rho^{-N} \calH$ for $\re s > -N + \tfrac{n}2$.
\end{proof}

\bigbreak
The fact that $R_P(s)$ admits meromorphic continuation as a bounded operator
on $\calH$ for $\re s > \nh$ (the $N=0$ case) implies, as an immediate corollary, that
\begin{equation}\label{sig.disc}
\sigma(P) \cap (-\infty, \tfrac{n^2}4) \text{ is discrete}.
\end{equation}

Theorem~\ref{pcontinue} allows us to define resonances associated to $P$ as
the poles of $R_P(s)$, with multiplicities given by
$$
m_P(\zeta)  := \rank \res_\zeta R_P(s).
$$
Then $\calR_P$ is defined to be the set of resonances of $P$, repeated according to the
multiplicities $m_P$.  The corresponding counting function is 
$$
N_P(t) :=  \#\{ \zeta\in \calR_P:\>|\zeta - \nh| \le t\}.
$$
The remaining goal of this section is to establish an order-of-growth estimate for $N_P(t)$.  
This requires first of all that $(X_0, g_0)$ be hyperbolic near infinity, in the sense that
sectional curvatures all equal $-1$ outside some compact set.  (No resonance bounds
are currently known in the asymptotically hyperbolic case without this extra condition.)
Such asymptotically hyperbolic manifolds are even in particular. 

We must also make some extra assumptions of $P$:
\renewcommand{\labelenumi}{(\roman{enumi})}
\begin{enumerate}
\item The operator $P$ must be bounded below, so
that the set (\ref{sig.disc}) is actually finite.  
\item The singular values of the resolvent of the cutoff operator $P_1$ introduced in the proof of Theorem~\ref{pcontinue}
satisfy a growth estimate,
\begin{equation}\label{p1.sing}
\mu_k((P_1-z)^{-1}) \le C\>|\im z|^{-\frac12} k^{-\frac{1}{n+1}},
\end{equation}
for some $C$ independent of $z$ and $k$. 
\end{enumerate}

The natural way to satisfy the growth estimate (\ref{p1.sing})
is to assume that $\calH = L^2(X, dg)$ for some Riemannian manifold $(X,g)$, possibly with boundary,
and that $P$ is an elliptic  self-adjoint pseudodifferential operator of order $2$.   
Then to establish (\ref{p1.sing}) we can start by using the resolvent estimate 
(\ref{std.res}) to estimate
$$
\mu_k((P_1-z)^{-1}) \le |\im z|^{-\frac12} \>\mu_k(|P_1-z|^{-\frac12}).
$$
Let $\Delta_{K_1}$ denote the Dirichlet Laplacian on $K_1$.
Since $|P_1-z|^{-\frac12}$ has order $-1$, the operator $(\Delta_{K_1}+1)^{\frac12} |P_1-z|^{-\frac12}$
is zeroth order and thus bounded on $L^2(K_1, dg)$.  
Then (\ref{p1.sing}) follows from 
\[
\begin{split}
\mu_k(|P_1-z|^{-\frac12}) & \le \mu_k((\Delta_{K_1}+1)^{-\frac12})
\>\bnorm{(\Delta_{K_1}+1)^{\frac12} |P_1-z|^{-\frac12} }\\
& \le C k^{-\frac{1}{n+1}}.
\end{split}
\]
(The fact that $C$ can be chosen independently of $z$ follows from the resolvent estimate (\ref{std.res}).)

\begin{theorem}\label{upper.bound}
Let $(X_0,g_0)$ be a conformally compact manifold, hyperbolic near infinity, and
$P$ a black box perturbation of $\Delta_{g_0}$ that satisfies the extra assumptions (i) and (ii).   Then 
$$
N_P(t) = O(t^{n+1}).
$$
\end{theorem}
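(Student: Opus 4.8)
The plan is to derive the $O(t^{n+1})$ bound on $N_P(t)$ from a bound on the number of zeros of a suitable holomorphic Fredholm determinant, following the classical scheme of Melrose and Zworski as adapted in \cite{CV:2003, Borthwick:2008}. First I would express the resonances of $P$ as the zeros of $D(s) := \det(I - L(s))$, where $L(s)$ is the compact, finite-rank-modulo-smoothing error operator constructed in the proof of Theorem~\ref{pcontinue}. The point is that $R_P(s) = M(s)(I-L(s))^{-1}$, so the poles of $R_P(s)$ (counted with multiplicity $m_P(\zeta)$) are controlled by the zeros of a regularized determinant of $I - L(s)$; one has to check that the algebraic multiplicity of a zero of this determinant dominates $m_P(\zeta)$, which is the standard Gohberg--Sigal argument. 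Since $L(s)$ is not trace class in general, I would use a regularized determinant $\det_p$ for $p$ large enough that $L(s)^p$ is trace class — the singular value estimate \eqref{p1.sing} for $(P_1-z)^{-1}$, together with the smoothing nature of $L_3(s)$ and the compact-support cutoffs, guarantees $\mu_k(L(s)) = O(k^{-1/(n+1)})$, so that $L(s) \in \mathcal{S}_{n+2}$, say, and $\det_{n+2}(I - L(s))$ is well defined and holomorphic wherever $L(s)$ is.

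The heart of the argument is then an upper bound on $\log|\det_{n+2}(I - L(\nh + ae^{i\theta}))|$ that is $O(a^{n+1})$, uniformly in $\theta$, on the relevant half-plane $\re s > -N + \nh$ (taking $N$ as large as needed to capture all resonances with $|\zeta - \nh| \le t$). By the standard inequality $\log|\det_p(I - A)| \le C_p \norm{A}_{\mathcal{S}_p}^p$, it suffices to bound the Schatten $(n+2)$-norm $\norm{L(s)}_{\mathcal{S}_{n+2}}^{n+2}$ by $O(|s|^{n+1})$. This splits into the three pieces $L_1, L_2, L_3$. For $L_2(s,z_0) = (s(n-s)-z_0)\chi_1(P_1 - z_0)^{-1}\chi$, the prefactor grows like $|s|^2$ while \eqref{p1.sing} gives $\mu_k \le C|\im z_0|^{-1/2} k^{-1/(n+1)}$; choosing $z_0$ fixed and using $\sum_{k \le |s|^{?}} (\dots)$ gives the right power — more precisely one exploits that $(P_1-z)^{-1}$ applied with $z = s(n-s)$ has singular values decaying like $|s(n-s)|$ times $k^{-2/(n+1)}$, i.e. $\mu_k((P_1 - s(n-s))^{-1}) \lesssim |s|^{-1} + k^{-2/(n+1)}$-type behavior, and a Weyl-law count of eigenvalues of $\Delta_{K_1}$ below $|s|^2$ produces the $O(|s|^{n+1})$ bound. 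For $L_1(z_0)$, which is independent of $s$, the bound is trivial. For $L_3(s) = [\Delta_g,\chi_0]R_0(s)(1-\chi)$, one uses the explicit form \eqref{R0.def} of the hyperbolic-near-infinity resolvent kernel and the asymptotically hyperbolic resolvent estimates of \cite{Gui:2005a, MM:1987}: on the compact region where $[\Delta_g,\chi_0]$ is supported, $R_0(s)$ has a smooth kernel with polynomial-in-$s$ bounds, and one invokes the known $O(|s|^{n+1})$ singular-value estimates for such cutoff resolvents in the hyperbolic-near-infinity setting (exactly the estimates underlying the $N(t) = O(t^{n+1})$ bounds in \cite{Borthwick:2008}).

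Having established $\log|\det_{n+2}(I - L(s))| \le C|s|^{n+1}$ on the half-plane, together with a lower bound at the single point where $\norm{L(s)} < 1$ (so that $\det_{n+2}(I-L(s)) \ne 0$ there, as exhibited in the proof of Theorem~\ref{pcontinue}), I would apply Jensen's formula (in the form of the standard lemma counting zeros of a holomorphic function of finite order in a disk, given an upper bound on the modulus on a larger disk and nonvanishing at the center). This yields that the number of zeros of $\det_{n+2}(I-L(s))$ in $|s - \nh| \le t$, hence $N_P(t)$, is $O(t^{n+1})$. The extra hypothesis (i) that $P$ is bounded below is used to ensure the finitely many points of $\sigma(P) \cap (-\infty, n^2/4)$ contribute only a bounded number of resonances on the physical sheet, so they do not affect the leading-order count.

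The main obstacle I anticipate is the uniform-in-$s$ Schatten-norm estimate on $L(s)$, specifically handling $L_2(s,z_0)$: the quadratic growth of the spectral parameter $s(n-s)$ must be absorbed by the decay of the singular values of $(P_1 - s(n-s))^{-1}$, and making this precise requires a careful Weyl-type count of eigenvalues of the Dirichlet Laplacian on the compact set $K_1$ below the level $\sim |s|^2$, uniformly as $s$ ranges over the half-circle of radius $a$. The estimate \eqref{p1.sing} was stated precisely to make this step go through, so the work is bookkeeping rather than new ideas, but it is where the exponent $n+1$ is actually produced. The contributions of $L_1$ and $L_3$ are comparatively routine given the parametrix construction already in hand and the cited resolvent bounds for even asymptotically hyperbolic manifolds.
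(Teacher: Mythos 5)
Your overall strategy --- realize the resonances as zeros of a regularized Fredholm determinant of $I-L(s)$, bound its Schatten norms, and apply a Jensen-type zero count --- is exactly the scheme of \cite{CV:2003, Borthwick:2008} that the paper invokes, so the architecture matches. But there are two concrete gaps. The first is the treatment of $L_2(s,z_0)$. With $z_0$ fixed, the prefactor $|s(n-s)-z_0|\sim|s|^2$ combined with (\ref{p1.sing}) gives $\mu_k(L_2)\lesssim |s|^2 k^{-1/(n+1)}$, so on the order of $|s|^{2(n+1)}$ singular values fail to be small, and the determinant bound (hence the zero count) comes out as $O(t^{2(n+1)})$, not $O(t^{n+1})$. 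Your fallback of taking $z=s(n-s)$ in the reference resolvent is not available either: for $s=\nh+ae^{i\theta}$ with $\theta$ near $\pm\tfrac{\pi}2$, $s(n-s)=\tfrac{n^2}4-a^2e^{2i\theta}$ is a large positive number lying in or arbitrarily close to $\sigma(P_1)$, so $(P_1-s(n-s))^{-1}$ admits no uniform bound and your proposed singular-value estimate for it is not meaningful there. The missing idea --- the one the paper explicitly singles out --- is that $z_0$ must move with the radius: one sets $z_0=\gamma N(n-\gamma N)$ for $|s|\le N$, so that $|\im z_0|\sim N^2$ and the ratio $|s(n-s)-z_0|/|\im z_0|^{1/2}$ in (\ref{muk.L2}) is only $O(N)$; then only $O(N^{n+1})$ singular values of $L_2$ are non-negligible. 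Your Weyl-law heuristic identifies the correct exponent, but the operator-theoretic implementation requires this $N$-dependent choice of $z_0$, which is precisely why (\ref{muk.L1}) and (\ref{muk.L2}) record the dependence on $z_0$.

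The second gap is the global Jensen argument. The determinant estimates coming out of the Cuevas--Vodev analysis are uniform only in sectors $\arg(\zeta-\nh)\in[-\pi+\varepsilon,\pi-\varepsilon]$, with constants blowing up as $\varepsilon\to0$; they degenerate near the negative real axis, where for $n$ odd the background resolvent itself has the infinite family of poles at $-\bbN_0$. A single application of Jensen's formula on a disk therefore does not close the argument. The paper fills in the missing sector by a separate argument from \cite{Borthwick:2008}, whose essential input is $\norm{R_P(s)}=O(1)$ for $\re s\ge a$, which is exactly where hypothesis (i) (boundedness below) is used --- not, as you suggest, merely to control the finitely many physical-sheet resonances. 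Finally, calling the $L_3(s)$ contribution ``comparatively routine'' inverts the actual difficulty: the paper stresses that the terms supported near infinity are the hard part of \cite{CV:2003, Borthwick:2008}; citing those estimates is nonetheless legitimate here, since $P$ agrees with $\Delta_{g_0}$ outside $K_0$ and they carry over verbatim.
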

\begin{proof}
This is a fairly minor generalization of the upper bound proved by Cuevas-Vodev \cite{CV:2003}
and Borthwick \cite{Borthwick:2008}.  This is because
for those arguments the interior metric enters only in the interior parametrix term, 
i.e., the first term on the right in (\ref{Rparametrix}).  The difficult part of the upper bound
analysis involves the terms supported near infinity, and this part of 
the argument applies immediately to $P$ by the assumption that 
$P|_{X_0-K_0} = \Delta_{g_0}$.

To apply the argument from Cuevas-Vodev, we need to check some estimates on the interior
error terms $L_1(z_0)$ and $L_2(s,z_0)$.  For the former, the fact that $\calD|_{X_0-K_0} \subset H^2(X_0-K_0, dg_0)$
implies that $L_1(z_0)$ is bounded as a map $\calH \to H^1(K_1-K_0, dg)$.  If $\Delta_{K_1-K_0}$ denotes
the Dirichlet Laplacian on $K_1-K_0$, then we can estimate
\begin{equation}\label{muk.L1}
\begin{split}
\mu_k(L_1(z_0)) & \le \mu_k\left((\Delta_{K_1-K_0}+1)^{-\frac12}\right)\> \bnorm{(\Delta_{K_1-K_0}+1)^{\frac12} L_1(s)} \\ 
& \le C k^{-\frac{1}{n+1}},
\end{split}
\end{equation}
where we can use (\ref{res.id}) and (\ref{std.res}) to see that we may take $C$ to be independent
of $z_0$.  For the $L_2(s, z_0)$ term, we first of all note that (\ref{std.res}) implies
$$
\norm{L_2(s,z_0)} \le C \frac{|s(n-s)-z_0|}{|\im z_0|}.
$$
By the assumption (\ref{p1.sing}) we can immediately estimate
\begin{equation}\label{muk.L2}
\mu_k(L_2(s,z_0)) \le C\frac{|s(n-s)-z_0|}{|\im z_0|^{\frac12}} \>k^{-\frac{1}{n+1}}.
\end{equation}
For the argument in \cite{CV:2003} one needs to
set $z_0 = \gamma N( n - \gamma N)$ for each $N$ such that $|s| \le N$, 
so the precise dependence of these estimates on $s$ and $z_0$ is significant.
The estimates (\ref{muk.L1}) and (\ref{muk.L2}) 
correspond precisely to the interior estimates \cite[eq.'s~(2.23--4)]{CV:2003}.
The proof of \cite[Prop.~1.2]{CV:2003} then gives a bound
$$
\#\Bigl\{\zeta\in \calR_P:\> |\zeta| \le r,\> \arg (\zeta - \tfrac{n}2) \in [-\pi+\varepsilon, \pi - \varepsilon]\Bigr\}
\le C_\varepsilon r^{n+1}
$$

To fill in the missing sector containing the negative real axis, 
we apply the argument from Borthwick \cite{Borthwick:2008}.  Here the interior parametrix enters
only in the proof of \cite[Lemma~5.2]{Borthwick:2008}.  
The required bound is that for some constant $a\ge n$, $\norm{R_{P}(s)} = O(1)$ for 
$\re s \ge a$.   Since $P$ is self-adjoint and bounded below
by assumption, this follows from the standard resolvent estimate.
The proof of \cite[Prop.~5.1]{Borthwick:2008} then shows that
$$
\#\Bigl\{\zeta\in \calR_P:\> |\zeta| \le r,\> \arg (\zeta+ a - n) \in [\tfrac{\pi}2+\varepsilon, 
\tfrac{3\pi}2 - \varepsilon]\Bigr\} \le C_\varepsilon r^{n+1}.
$$

The combination of estimates in the two regions gives the global result.
\end{proof}

\begin{remark}
Colin Guillarmou has noted a mistake in the original argument from \cite{GZ:1995b}, which
propagated through the arguments in \cite{CV:2003} and 
\cite{Borthwick:2008}.  The faulty claim is that one can choose a family of cutoffs 
$\{\chi^i\}$ such that $\sum \chi^i = 1$
in some neighborhood of $\bX$ and also so that, in local coordinates isometric to 
the unit half-disk in $\bbH^{n+1}$, $\chi^i$ factors as $\varphi(x) \psi(y)$ in the coordinates
$(x,y) \in \bbR^n \times \bbR_+$.  It is not possible to satisfy these assumptions simultaneously.

Fortunately, this problem is relatively easy to fix.  There are two sets of cutoffs used in these proofs.
(All three proofs use the same construction.)  The inner cutoffs $\{\chi^i\}$ must form
a partition of unity near the boundary, but are not actually required to factor in local coordinates.
The essential requirement for the inner cutoffs is that their
derivatives satisfy quasi-analytic estimates 
(see \cite[eq.~(2.6)]{Borthwick:2008} for example), and this is easily obtained without
reference to a factorization.  The local factorization assumption is crucial only
for outer cutoffs $\{\chi_1^i\}$ (with $\chi_1^i=1$ 
on the support of $\chi^i$).
We may keep this assumption in place because the outer
cutoffs do not form a partition of unity.   
\end{remark}

\bigbreak
\section{Relative scattering theory}\label{scatt.sec}

For this section we continue to assume, as in Theorem~\ref{upper.bound}, a 
conformally compact background manifold $(X_0, g_0)$ that is hyperbolic
near infinity.  The restriction $h_0 = \rho^2 g_0|_{\rho=0}$ defines a Riemannian metric on $\bX_0$,
whose conformal class is independent of $\rho$.  Thus $\bX_0$ is commonly referred to as the
``conformal infinity'' of $(X_0, g_0)$.  A black box perturbation $P$ shares the
same conformal infinity, since $P$ agrees with $\Delta_{g_0}$ outside a compact set.

The scattering matrices $S_P(s)$ and $S_0(s)$, associated to $P$ and $\Delta_{g_0}$,
respectively, are pseudodifferential operators on $\bX_0$ defined as in \cite{JS:2000, GZ:2003}.
Away from the diagonal, we can realize the kernel of the scattering matrix as a boundary
limit of the resolvent:
\begin{equation}\label{sc.reslim}
S_*(s;x,x') = \lim_{\rho, \rho' \to 0}  (\rho\rho')^{-s} R_*(s;z,z')\qquad\text{for }x \ne x'.
\end{equation}
where $*$ = $P$ or $0$.  (This relationship can be extended to the diagonal if one is sufficiently 
careful - see \cite{JS:2000}.)
This connection allows us to see that $S_P(s)$ and $S_0(s)$ differ by a smoothing operator, as follows.
By applying $R_P(s)$ to (\ref{pmik}) from the left, we obtain the identity
$$
R_P(s) = M(s) + R_P(s)L(s).
$$
Then taking boundary limits as in (\ref{sc.reslim}) gives the kernel of $S_P(s)$ on the left, while on the right
we obtain the kernel of $S_0(s)$ as the limit of $M(s)$, plus a smooth contribution from the $L(s)$ term.
This implies that the relative scattering matrix $S_P(s) S_0(s)^{-1}$ is determinant class,
and we define the relative scattering determinant
\begin{equation}\label{tau.def}
\tau(s) := \det S_P(s) S_0(s)^{-1}.
\end{equation}

Let $H_*(s)$ denote the Hadamard product over the resonance set $\calR_*$:
\begin{equation}\label{pz.def}
H_*(s) := \prod_{\zeta\in \calR_*} E\Bigl(\frac{s}{\zeta}, n+1\Bigr),
\end{equation}
where
$$
E(z,p) := (1-z) \exp \Bigl(z + \frac{z^2}z + \dots + \frac{z^p}p \Bigr).
$$

\begin{proposition}\label{detsrel.factor}
Assume that $(X_0,g_0)$ is conformally compact and hyperbolic near infinity, and
$P$ is a black box perturbation of $\Delta_{g_0}$ satisfying the extra assumptions
(i) and (ii) from \S\ref{bbox.sec}.
The relative scattering determinant admits a factorization
\begin{equation}\label{detsrel.pp}
\tau(s) = e^{q(s)} \frac{H_P(n-s)}{H_P(s)} \frac{H_0(s)}{H_0(n-s)},
\end{equation}
where $q(s)$ is a polynomial of degree at most $n+1$.  
\end{proposition}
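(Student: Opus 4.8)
The plan is to combine the analytic structure of $\tau(s)$ — that it is a ratio of entire functions whose zeros and poles are located precisely at the resonances of $P$ and $\Delta_{g_0}$, together with a suitable functional equation — with a growth estimate that pins down the polynomial $q(s)$ to degree at most $n+1$. First I would argue that $S_P(s)$ and $S_0(s)$ are each families of elliptic pseudodifferential operators on $\bX_0$ that are holomorphic in $s$ except at the respective resonances and at the points $s$ with $n-s$ a resonance (the latter coming from the poles of $R_*(n-s)$ that enter via the functional equation $S_*(s) S_*(n-s) = \mathrm{Id}$, which holds in this conformally-compact-hyperbolic-near-infinity setting; see \cite{GZ:2003, JS:2000}). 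Consequently $\tau(s) = \det\big(S_P(s)S_0(s)^{-1}\big)$ is meromorphic on $\bbC$, with zeros and poles controlled by $m_P$, $m_0$ and their reflections across $\re s = \nh$. Using (\ref{sc.reslim}) and the parametrix identity $R_P(s) = M(s) + R_P(s)L(s)$ from the proof of Theorem~\ref{pcontinue}, one sees that at a resonance $\zeta$ of $P$ the operator $S_P(s)$ acquires a pole whose rank equals $m_P(\zeta)$ (the residue factors through $\res_\zeta R_P$), and symmetrically a zero of that order at $n-\zeta$ via the functional equation; the same statements hold for $S_0$ with $m_0$. Hence the divisor of $\tau$ is exactly that of $H_P(n-s)H_0(s)\big/\big(H_P(s)H_0(n-s)\big)$.

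Next I would invoke the growth bounds. By Theorem~\ref{upper.bound} the resonance counting functions satisfy $N_P(t), N_0(t) = O(t^{n+1})$, so the canonical products $H_P$ and $H_0$ formed with genus $n+1$ converge and are entire of order at most $n+1$ (the exponent $n+1$ in $E(\,\cdot\,,n+1)$ is chosen precisely so that $\sum |\zeta|^{-(n+2)} < \infty$, which follows from the $O(t^{n+1})$ bound). Therefore the ratio $G(s) := \tau(s)\,H_P(s)H_0(n-s)\big/\big(H_P(n-s)H_0(s)\big)$ is entire and nowhere vanishing, hence equals $e^{q(s)}$ for some entire $q$. To show $\deg q \le n+1$ it suffices to bound $\log|\tau(s)|$ and $\log|H_*(s)|$ by $O(|s|^{n+1+\epsilon})$ on a sequence of expanding circles avoiding the zeros/poles, together with a matching lower bound on $|H_*|$ away from the resonances; the standard Hadamard/Borel–Carathéodory machinery then forces $q$ to be a polynomial of degree $\le n+1$. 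The bound on $H_*$ is classical for canonical products of the given order. The bound on $|\tau(s)|$ I would get from a singular-value estimate: writing $\tau(s) = \det\big(\mathrm{Id} + (S_P(s)-S_0(s))S_0(s)^{-1}\big)$ and using $|\det(\mathrm{Id}+A)| \le \exp(\|A\|_1)$ together with trace-norm estimates on $(S_P-S_0)S_0^{-1}$ inherited from the trace-class bounds on $R_P - M$ (which, by assumption (ii) and the parametrix, carry the $k^{-1/(n+1)}$ singular-value decay, giving trace norms $O(|s|^{n+1})$ in the relevant region).

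The main obstacle I expect is the bookkeeping that ties the order of a pole of $S_P(s)$ at a resonance $\zeta$ to $m_P(\zeta) = \rank \res_\zeta R_P(s)$, and in particular handling the possible overlap when $\zeta$ and $n-\zeta$ are both resonances, or when a resonance of $P$ coincides with one of $\Delta_{g_0}$ — in such cases cancellations occur in the divisor of $\tau$ and one must check that the factorization (\ref{detsrel.pp}) correctly records the net order. This is where the precise relationship (\ref{sc.reslim}) between the scattering matrix and the boundary limit of the resolvent, combined with the functional equation $S_*(s)S_*(n-s) = \mathrm{Id}$, does the real work; one shows the rank of $\res_\zeta S_P(s)$ is exactly $m_P(\zeta)$ by analyzing the finite-rank residue of $R_P$ and checking its boundary data is nondegenerate, which is the standard argument from \cite{GZ:1997, Borthwick:2008} adapted to the black-box setting. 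Once the divisor is correct and the order-$n+1$ growth of all factors is in hand, the conclusion that $q$ is a polynomial of degree at most $n+1$ is automatic.
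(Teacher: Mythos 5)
Your overall architecture --- identify the divisor of $\tau$ so that $\tau(s)\,H_P(s)H_0(n-s)\big/\bigl(H_P(n-s)H_0(s)\bigr)$ is entire and nonvanishing, then bound its growth to force $q$ to be a polynomial of degree at most $n+1$ --- is the same as the paper's, which simply cites \cite[Prop.~7.2]{Borthwick:2008} (resting on Guillarmou's comparison of resolvent and scattering multiplicities) for the factorization and \cite[Lemma~5.2]{Borthwick:2008} for the growth input. But your execution of the divisor step contains a genuine error. You assert that the rank of $\res_\zeta S_P(s)$ equals $m_P(\zeta)$, so that the divisors of $S_P$ and $S_0$ are separately given by the resonance sets. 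In the asymptotically hyperbolic setting this is false: as one already sees from the explicit formula (\ref{S0.coeff}), $S_0(s)$ carries \emph{infinite-rank} poles at $s\in\nh+\bbN_0$ and infinite-rank zeros at $s\in\nh-\bbN_0$ coming from the $\Gamma(\nh-s)/\Gamma(s-\nh)$ normalization, which have nothing to do with resonances; and even after renormalizing these away, the scattering multiplicity $\nu_P(\zeta)$ differs from $m_P(\zeta)-m_P(n-\zeta)$ by correction terms at the points $\nh\pm l$ when $n$ is even (Guillarmou's theorem; see the displayed formula for $\nu_P$ in \S\ref{ex.sec}). The factorization (\ref{detsrel.pp}) is saved not by the accidental coincidences of resonances that you mention, but by the fact that all of these discrepancies depend only on the structure near infinity, which $P$ and $\Delta_{g_0}$ share, so they cancel identically in the \emph{relative} determinant $\det S_P(s)S_0(s)^{-1}$. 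Your argument needs to exhibit this cancellation rather than compute the divisor of each factor separately.

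A second, smaller gap concerns the growth bound. The estimate $|\det(I+A)|\le e^{\norm{A}_1}$ with $\norm{(S_P-S_0)S_0^{-1}}_1=O(|s|^{n+1})$ is only available where $R_P(s)$ is uniformly controlled, i.e.\ in the physical half-plane away from the discrete spectrum and the critical line; it does not by itself give the required $O(|s|^{n+1+\epsilon})$ bound on full circles $|s-\nh|=a$. You must either transfer the bound to $\re s<\nh$ via the functional equation $\tau(s)\tau(n-s)=1$ together with minimum-modulus lower bounds on the canonical products, or, as the paper does, estimate a renormalized determinant $\vartheta_*(s)$ directly in the half-plane $\re s<a-n$ away from $-\bbN_0$, using only the lower boundedness of $P$. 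Once both points are repaired, your Hadamard/Borel--Carath\'eodory conclusion is sound.
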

\begin{proof}
Since structure near infinity is unchanged from $(X_0,g_0)$, the arguments of Guillarmou~\cite{Gui:2005}
relating the resolvent and scattering pole multiplicities apply to $P$.  Thus the proof of
\cite[Prop.~7.2]{Borthwick:2008} shows that (\ref{detsrel.pp}) holds with 
with $q(s)$ a polynomial of unknown degree. 

To control the degree, we use the fact that $P$ is bounded from below to obtain
$$
\norm{R_P(s)} = O(1), \text{ for }\re s \ge a,
$$
for some $a \ge n$.
Then the proof of  \cite[Lemma~5.2]{Borthwick:2008} gives
that
$$
|\vartheta_P(s)| < e^{C_{\eta}\brak{s}^{n+1}},
$$
for $\re s < a - n$ with dist$(s, -\bbN_0) >\eta$.  The same estimate applies to $\vartheta_0(s)$.
In the formula
$$
\vartheta_P(s) = e^{-q(s)} \frac{H_0(n-s)}{H_0(s)} \frac{H_P(s)}{H_P(n-s)}\>  \vartheta_0(s),
$$
the Hadamard products have order $n+1$.  Thus the $\vartheta_*(s)$ estimates 
imply that $|q(s)| \le C |s|^{n+1+\delta}$ in the half-plane $\re s < a-n$, for any $\delta>0$.
Since $q(s)$ is already known to be polynomial, the degree of $q(s)$ is at most $n+1$.  
\end{proof}

\bigbreak
One nice application of Proposition~\ref{detsrel.factor} is a Jensen-type formula connecting the resonance
counting functions to a contour integral involving the relative scattering determinant.
To state this we introduce the relative scattering phase of $P$, defined as
$$
\sigma(\xi) := \frac{i}{2\pi} \log \tau(\tfrac12+i\xi),
$$
with branches of the log chosen so that $\sigma(\xi)$ is continuous starting from $\sigma(0) = 0$.
By the properties of the relative scattering matrix, $\sigma(\xi)$ is real and $\sigma(-\xi) = - \sigma(\xi)$.

The following relative counting formula is the asymptotically hyperbolic analog of a 
formula developed by Froese \cite{Froese:1998} for Schr\"odinger operators in the Euclidean setting.

\begin{proposition}\label{relcount}
Assume that $P$ is a black box perturbation of $(X_0,g_0)$ as in Proposition~\ref{detsrel.factor}.
As $a\to \infty$,
$$
\int_0^a \frac{N_P(t) - N_0(t)}{t}\>dt = 2\int_0^a \frac{\sigma(t)}{t}\>dt + 
\frac{1}{2\pi} \int_{-\frac\pi2}^{\frac\pi2} \log |\tau(\nh+ ae^{i\theta})|\>d\theta + O(\log a).
$$
\end{proposition}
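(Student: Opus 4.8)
The plan is to derive the formula from a Jensen-type identity on a half-disk applied to $\tau$ itself, using two consequences of the factorization in Proposition~\ref{detsrel.factor}: $\tau$ is meromorphic on $\bbC$ with zeros along $(n-\calR_P)\cup\calR_0$ and poles along $\calR_P\cup(n-\calR_0)$ (the exponential factor $e^{q(s)}$ being entire and zero-free, it contributes nothing to the divisor), and $|\tau|=1$ on the line $\re s=\nh$ because $S_P(\nh+i\xi)$ and $S_0(\nh+i\xi)$ are unitary. Throughout I would assume $\nh$ is a resonance of neither $P$ nor $\Delta_{g_0}$, so that every integral below converges at $t=0$; the general case follows by shifting the common center of the disks and changes each term by $O(\log a)$. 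With the chosen branch of the logarithm one has $\arg\tau(\nh+i\xi)=-2\pi\sigma(\xi)$; recall that $\sigma$ is odd and that $\log|\tau(\nh)|=0$.

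The first step is the following identity, valid for every generic $t>0$ (meaning $\tau$ has no zero or pole on $\{|s-\nh|=t\}$, nor on the segment $I_t=\{\nh+i\xi:|\xi|\le t\}$ of the critical line, which can only happen at an embedded eigenvalue): if $\Omega_t=\{|s-\nh|<t\}\cap\{\re s>\nh\}$ and $\nu(t)$ is the number of zeros minus poles of $\tau$ in $\Omega_t$, then
\[
\nu(t)=2\sigma(t)+\frac{1}{2\pi}\,w(t),
\]
where $w(t)$ denotes the increment of $\arg\tau$ along the arc $\Gamma_t=\{\nh+te^{i\theta}:|\theta|\le\tfrac\pi2\}$. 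This is just the argument principle on $\Omega_t$: traversing $\partial\Omega_t$ counterclockwise, the increment of $\arg\tau$ along the flat boundary $I_t$ equals $4\pi\sigma(t)$ by the relation $\arg\tau=-2\pi\sigma$ together with the oddness of $\sigma$, and along the rest of the boundary it is $w(t)$.

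Next I would compare $\nu(t)$ to the counting functions and handle the arc term. Since the resonances of $P$ with $\re\zeta\ge\nh$ are precisely the $L^2$-eigenvalues of $P$ below $n^2/4$ --- finite in number by assumption~(i), and likewise for $\Delta_{g_0}$ --- a direct count of the divisor of $\tau$ inside $\Omega_t$ gives $\nu(t)=N_P(t)-N_0(t)+b(t)$ with $b$ integer-valued, bounded, and vanishing for small $t$, whence $\int_0^a \nu(t)\,\frac{dt}{t}=\int_0^a (N_P(t)-N_0(t))\,\frac{dt}{t}+O(\log a)$. For the arc term, $\log\tau$ is holomorphic away from the divisor of $\tau$, so the polar form of the Cauchy--Riemann equations about $\nh$ gives $\partial_\theta\arg\tau=t\,\partial_t\log|\tau|$; integrating in $\theta$ over $[-\tfrac\pi2,\tfrac\pi2]$ yields $w(t)=t\,\Phi'(t)$ for generic $t$, where $\Phi(t)=\int_{-\pi/2}^{\pi/2}\log|\tau(\nh+te^{i\theta})|\,d\theta$. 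As $\Phi$ is absolutely continuous with this derivative,
\[
\int_0^a \frac{w(t)}{t}\,dt=\Phi(a)-\pi\log|\tau(\nh)|=\int_{-\pi/2}^{\pi/2}\log|\tau(\nh+ae^{i\theta})|\,d\theta .
\]
Dividing the identity for $\nu(t)$ by $t$, integrating over $(0,a)$, and substituting the last two relations gives the proposition.

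The main obstacle is the bookkeeping concealed in the first and third steps: one has to restrict to generic radii, justify differentiating $\Phi$ under the integral sign and its absolute continuity across the locally finitely many radii at which a zero or pole of $\tau$ crosses $\Gamma_t$, and carry the bounded error $b(t)$ together with any embedded eigenvalues on $\{\re s=\nh\}$ without spoiling the $O(\log a)$. Conceptually everything is transparent: the flat part of $\partial\Omega_t$ produces the term $2\int_0^a\sigma(t)\,\frac{dt}{t}$, while the circular part produces the integral of $\log|\tau|$ over the half-circle.
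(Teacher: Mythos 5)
Your argument is correct and is essentially the paper's own proof: both apply the argument principle to $\tau$ on the half-disk bounded by the critical line and the arc, identify the divisor of $\tau$ in $\re s>\nh$ from the factorization (with the discrete-spectrum contributions $d_P,d_0$ supplying the bounded error that integrates to $O(\log a)$), read off $2\sigma(t)$ from the flat boundary, and recognize the arc contribution as $\tfrac{t}{2\pi}\partial_t\int\log|\tau(\nh+te^{i\theta})|\,d\theta$ before integrating $dt/t$. The paper simply writes the contour integral as $\im\frac{1}{2\pi}\oint\frac{\tau'}{\tau}$ and splits it, which is the same computation you carry out via Cauchy--Riemann.
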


\begin{proof}
According to Proposition~\ref{detsrel.factor},
for $\re(s)>\nh$, $\tau(s)$ has zeros when $n-s \in \calR_P$ or $s \in \calR_0$ and
the latter case occurs only if $s(n-s)$ lies in the discrete spectrum of $\Delta_{g_0}$.
Likewise, poles of $\tau(s)$ for $\re s >\nh$ occur
when either $n-s \in \calR_0$ or $s \in \calR_P$, the latter only if $s(n-s)$ lies in the
discrete spectrum of $P$.  All of these are counted with multiplicity of course.

Let $\eta$ denote the contour $(\nh + t \exp(i[-\pi/2,\pi/2])) \cup [\nh+ it,\nh-it]$, as
shown in Figure~\ref{econtour}. 
Assuming $t$ is not the absolute value of a resonance in $\calR$ or $\calR_0$, we have
$$
\frac{1}{2\pi i} \oint_{\eta} \frac{\tau'}{\tau}(s)\>ds = N_P(t) - N_0(t) - 2d_P(t)+2 d_0(t),
$$
where $d_*(u)$ is the counting function for the (finite) set $\calR_* \cap (\nh,\infty)$ (the resonances
coming from the discrete spectrum).
\begin{figure} 
\psfrag{t}{$t$}
\psfrag{e}{$\eta$}
\psfrag{nh}{$\nh$}
\begin{center}  
\includegraphics{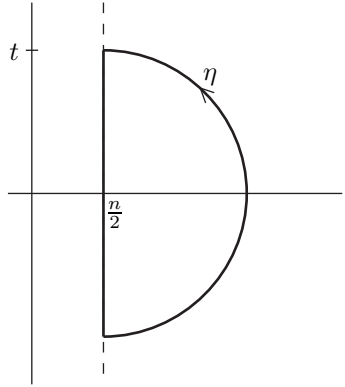} 
\end{center}
\caption{The contour $\eta$.}\label{econtour}
\end{figure}
Evaluating the contour integral yields
\[
\begin{split}
\frac{1}{2\pi i} \oint_{\eta} \frac{\tau'}{\tau}(s)\>ds
& = \im \frac{1}{2\pi} \oint_{\eta} \frac{\tau'}{\tau}(s)\>ds \\
& = \int_{-t}^t \sigma'(\xi)\>d\xi + \im \frac{1}{2\pi} \int_{-\frac\pi2}^{\frac\pi2} \frac{\tau'}{\tau}(\nh+te^{i\theta})\>ite^{i\theta}\>d\theta \\
& = 2\sigma(t) + \frac{1}{2\pi} \int_{-\frac\pi2}^{\frac\pi2} t\frac{\del}{\del t} \log |\tau(\nh+te^{i\theta})|\>d\theta.
\end{split} 
\]
Now if we divide by $t$ and integrate, we obtain the claimed formula with remainder
given by
$$
2\int_0^a \frac{d_P(t) - d_0(t)}{t}\>dt = O(\log a).
$$ 
\end{proof}

\bigbreak
Our second important application of Proposition~\ref{detsrel.factor} is to establish the Poisson formula,
which will lead to Weyl-type asymptotics for the relative scattering phase.
Define the meromorphic function $\Upsilon_*(s)$ by
$$
(2s-n) \otr [R_*(s) - R_*(1-s)],
$$
for $s\notin \bbZ/2$.  The connection between $\Upsilon_*(s)$ and the relative
scattering determinant established by Patterson-Perry \cite[Prop.~5.3 and Lemma~6.7]{PP:2001}
depends only on the structure of model neighborhoods near infinity, and so carries over
to our case without alteration.  This yields the following Birman-Krein type formula:
\begin{proposition}\label{birman.krein}
For $s\notin \bbZ/2$ we have the meromorphic identity,
$$
- \del_s \log \tau(s) = \Upsilon_P(s) - \Upsilon_0(s).
$$
\end{proposition}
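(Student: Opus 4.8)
The plan is to reduce the identity to the computation of Patterson--Perry \cite[Prop.~5.3, Lemma~6.7]{PP:2001}, checking that each ingredient of their argument survives in the present generality. Recall that the scattering matrix $S_*(s)$ is the renormalized boundary value of $R_*(s)$ (cf.\ (\ref{sc.reslim})); equivalently there is a Poisson operator $E_*(s)\colon \cinf(\bX_0)\to\cinf(X_0)$ with $(\Delta_{g_0}-s(n-s))E_*(s)=0$ and leading asymptotics $\rho^{\,n-s}(\cdot)+\rho^{\,s}\,S_*(s)(\cdot)$ at conformal infinity, and $R_*(s)$, $E_*(s)$, $S_*(s)$ obey the usual stationary relations. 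The first step is the ``absolute'' Birman--Krein identity, separately for $*=P$ and $*=0$: differentiating the stationary relation in $s$ and applying Green's formula on a truncation $\{\rho>\vep\}$, one finds in the limit $\vep\to0$ that the logarithmic $s$-derivative of a renormalization of $\det S_*(s)$ equals $(2s-n)\,\otr[R_*(s)-R_*(1-s)]$ plus boundary terms computed in a collar of $\bX_0$ from the model metric alone. The $\otr$ here makes sense because the resolvent's diagonal singularity is governed by the principal part of the operator and is independent of the spectral parameter, so its leading part cancels in $R_*(s)-R_*(1-s)$ and the finite-part regularization near $\bX_0$ is precisely the model $0$-trace.

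Second, I would subtract the two absolute identities. Since $S_P(s)S_0(s)^{-1}$ is determinant class --- shown in \S\ref{scatt.sec} from the parametrix relations (\ref{Rparametrix})--(\ref{pmik}) --- the quantity $\del_s\log\tau(s)=\del_s\tr\log\bigl(S_P(s)S_0(s)^{-1}\bigr)$ is an honest meromorphic function requiring no renormalization, and every boundary term in the two absolute identities is the \emph{same} for $P$ and for $\Delta_{g_0}$, because the operators agree on $X_0-K_0$ and the terms are evaluated near $\bX_0$. Those terms therefore cancel exactly and the asserted identity is left. The restriction $s\notin\bbZ/2$ is inherited from \cite[Lemma~6.7]{PP:2001}: the half-integers are where the two renormalizations (in the $0$-trace and in $\det S_*(s)$) can produce spurious model poles, and must be excised before the cancellation is clean; meromorphy of both sides off $\bbZ/2$ then follows from Theorem~\ref{pcontinue} and the meromorphy of $S_*(s)$.

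The only point needing genuine attention, rather than a verbatim appeal to \cite{PP:2001}, is the definition and $s$-meromorphy of $\otr R_P(s)$ when the interior of $X$ is an abstract black box. For this I would use the splitting $R_P(s)=(1-\chi_0)R_0(s)(1-\chi)+\bigl[\chi_1(P_1-z_0)^{-1}\chi+M(s)L(s)(I-L(s))^{-1}\bigr]$ coming from (\ref{Rparametrix})--(\ref{pmik}). Every contribution reaching conformal infinity is built from the model resolvent $R_0(s)$ --- either $(1-\chi_0)R_0(s)(1-\chi)$ or a composition of $R_0(s)$ with a compactly-supported factor --- so the $\vep\to0$ renormalization near $\bX_0$ is literally that of $R_0(s)$ and is untouched by the interior modification. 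The remaining, genuinely black-box, part is supported in the compact set $K_1$; there the resolvent's diagonal singularity again cancels in $R_P(s)-R_P(1-s)$, and the assumption that $\chr_{K_0}(P+i)^{-1}$ is compact, sharpened by the singular-value bound (\ref{p1.sing}), makes the relevant interior operator trace class with an $s$-meromorphic trace whose poles are exactly the resonances of $P$ in that region. I expect this interior bookkeeping --- confirming that the black-box contribution to the trace is finite and carries precisely the expected poles in $s$ --- to be the main obstacle; the rest is a transcription of \cite{PP:2001}.
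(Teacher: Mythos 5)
Your proposal follows essentially the same route as the paper, which simply observes that the Patterson--Perry computation \cite[Prop.~5.3, Lemma~6.7]{PP:2001} relating $\Upsilon_*(s)$ to the scattering determinant depends only on the model structure near infinity and therefore transfers verbatim; your additional interior bookkeeping via the parametrix (\ref{Rparametrix}) is a reasonable elaboration of what the paper leaves implicit. (One small caution: the bound (\ref{p1.sing}) alone does not make the interior resolvent trace class --- what matters, as you also note, is that the diagonal singularities cancel in the difference $R_P(s)-R_P(n-s)$.)
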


By the functional calculus, $\Upsilon_*(\tfrac{n}2 + i\xi)$ is essentially the Fourier transform 
of the continuous part of the wave 0-trace (see \cite[Lemma~8.1]{Borthwick:2008} for
the precise statement).  By Propositions~\ref{detsrel.factor} and \ref{birman.krein} we can write
$$
\Upsilon_P(s) - \Upsilon_0(s) 
= \del_s \log \left[e^{q(s)} \frac{H_P(s)}{H_P(n-s)} \frac{H_0(n-s)}{H_0(s)} \right]
$$
Taking the Fourier transform just as in the proof of  \cite[Thm.~1.2]{Borthwick:2008}
then gives a relative Poisson formula:
\begin{theorem}\label{rel.poisson}
Assume that $P$ is a black box perturbation of $(X_0,g_0)$ as in Proposition~\ref{detsrel.factor}.
The difference of regularized wave traces satisfies
\[
\begin{split}
&\otr \left[ \cos \left(t \sqrt{P - \tfrac{n^2}4}\,\right) \right] 
- \otr \left[ \cos \left(t \sqrt{\Delta_{g_0} - \tfrac{n^2}4}\,\right) \right]  \\
&\qquad = \frac12 \sum_{\zeta\in \calR_P} e^{(\zeta-\frac{n}2)|t|} 
- \frac12 \sum_{\zeta\in \calR_0} e^{(\zeta-\frac{n}2)|t|},
\end{split}
\]
in the sense of distributions on $\bbR - \{0\}$.
\end{theorem}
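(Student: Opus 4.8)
The plan is to adapt the proof of the absolute Poisson formula \cite[Thm.~1.2]{Borthwick:2008} to the relative setting, using the structural inputs already in place: the Birman--Krein identity $-\del_s\log\tau(s) = \Upsilon_P(s)-\Upsilon_0(s)$ of Proposition~\ref{birman.krein}, the factorization of $\tau$ from Proposition~\ref{detsrel.factor}, and the identification of $\Upsilon_*(\nh+i\xi)$ with the Fourier transform of the continuous part of the regularized wave trace of $\Delta_{g_*}$ (\cite[Lemma~8.1]{Borthwick:2008}). Combining Propositions~\ref{birman.krein} and~\ref{detsrel.factor} one has
\[
\Upsilon_P(s) - \Upsilon_0(s) = \del_s \log \left[ e^{q(s)} \frac{H_P(s)}{H_P(n-s)}\frac{H_0(n-s)}{H_0(s)}\right],
\]
so the task reduces to computing, in the distributional sense, the inverse Fourier transform of the right-hand side under $s = \nh + i\xi$, and matching the outcome against the wave-trace side via Lemma~8.1.

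The key computation is the inverse transform of the logarithmic derivative of an order-$(n+1)$ Hadamard product. First I would record the partial-fraction expansion
\[
\frac{H_*'}{H_*}(s) = \sum_{\zeta\in\calR_*}\left[\frac{1}{s-\zeta} + Q_\zeta(s)\right],
\]
where each $Q_\zeta$ is a polynomial in $s$ of degree at most $n$ coming from the exponential factor in $E(\cdot, n+1)$. Substituting $s=\nh+i\xi$ and transforming term by term, each simple pole $\tfrac{1}{s-\zeta}$ yields a one-sided exponential proportional to $e^{(\zeta-\nh)|t|}$, while the $Q_\zeta(s)$, together with the polynomial $q'(s)$ (degree at most $n$), yield finite linear combinations of derivatives of $\delta$ at $t=0$. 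Assembling the four Hadamard factors and using the reflection $s\mapsto n-s$ — together with Guillarmou's identification of resolvent and scattering multiplicities, already invoked in the proof of Proposition~\ref{detsrel.factor}, which guarantees that the poles and zeros of $H_P$ are precisely the resonances $\calR_P$, and likewise for $H_0$ — the nonlocal part of the transform collapses to $\tfrac12\sum_{\zeta\in\calR_P}e^{(\zeta-\nh)|t|} - \tfrac12\sum_{\zeta\in\calR_0}e^{(\zeta-\nh)|t|}$, which is the asserted identity; the $\delta$-supported remainders are exactly what is discarded on restricting to $\bbR-\{0\}$. The finitely many eigenvalue-generated resonances, arising from the discrete spectra of $P$ and $\Delta_{g_0}$ below $\tfrac{n^2}4$ and giving genuinely growing exponentials, must be tracked separately; assumption (i) ensures there are only finitely many of them.

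The analytic content beyond this bookkeeping is the justification of term-by-term Fourier transformation and the convergence, as distributions on $\bbR-\{0\}$, of the resonance sums $\sum e^{(\zeta-\nh)|t|}$. This rests on Theorem~\ref{upper.bound}, which supplies $N_P(t)=O(t^{n+1})$, together with the elementary bound $N_0(t)=O(t^{n+1})$ obtained by integrating (\ref{m0.k}); these order bounds are precisely what make the Hadamard products of order $n+1$ well defined and what permit the sums to be resummed after an Abel-type regularization, exactly as in \cite{Borthwick:2008} (and, earlier, Guillop\'e--Zworski \cite{GZ:1997}). I expect this convergence/regularization step to be the main obstacle, whereas the reflection and the matching of multiplicities are routine once Proposition~\ref{detsrel.factor} is available. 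Since nothing near conformal infinity has changed and the perturbation enters only through the already-established relative factorization, the argument of the cited references transfers essentially verbatim.
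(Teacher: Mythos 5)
Your proposal is correct and follows essentially the same route as the paper: the paper's proof consists precisely of combining Proposition~\ref{birman.krein} with the factorization of Proposition~\ref{detsrel.factor} and then taking the Fourier transform term by term as in \cite[Thm.~1.2]{Borthwick:2008}, which is exactly the computation you outline. The details you supply (partial fractions of the order-$(n+1)$ Hadamard products, the $\delta$-supported contributions from the polynomial factors, and the convergence via the $O(t^{n+1})$ counting bounds) are the ones the paper delegates to the cited reference.
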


The desired asymptotics of the scattering phase correspond to the big singularity of the wave trace
at $t=0$.  This singularity is very much analogous to that worked out by 
Duistermaat-Guillemin \cite{DG:1975} in the compact case.
The following result was proven for Riemann surfaces, possible with internal boundary, 
by Guillop\'e-Zworski \cite[Lemma~6.2]{GZ:1997} and for higher 
dimensional asymptotically hyperbolic manifolds (without boundary) 
by Joshi-S\'a Baretto \cite{JS:2001}.  

Let $(X_0, g_0)$ be a Riemannian manifold that is conformally compact and hyperbolic 
outside some compact set $K_0 \subset X_0$ (a more restrictive class than asymptotically
hyperbolic).   Then we consider another Riemannian manifold $(X,g)$, possibly with boundary, 
with compact $K \subset X$ such that $(X-K,g) \cong (X_0 - K_0, g_0)$.
Let $\Delta_g$ denote the Laplacian on $(X,g)$.
We may also include a potential $V \in \cinf_0(X)$, supported in $K$. 
Given this setup we define the operator
$$
P := \Delta_g + V,
$$  
acting on $L^2(X,dg)$ with some self-adjoint boundary condition imposed on 
the internal boundary $\del X$.  Clearly $P$ is a black box perturbation of
$\Delta_{g_0}$, and it satisfies assumptions (i) and (ii) of \S\ref{bbox.sec} by
the remark preceding Theorem~\ref{upper.bound}.

\begin{proposition}\label{wt.sing}
Assume that $P = \Delta_g + V$ as described above.  If $\psi \in \cinf_0(\bbR)$ has support 
in a sufficiently small neighborhood of $0$ and $\psi = 1$ in some smaller
neighborhood of $0$, then 
\begin{equation}\label{wave.asymp}
\int_{-\infty}^\infty e^{-it\xi} \psi(t) \otr \left[ \cos \left(t \sqrt{\smash[b]{P - n^2/4}}\,\right) \right]\>dt 
\sim \sum_{k=0}^\infty a_k |\xi|^{n-2k},
\end{equation}
where 
$$
a_0 =   \frac{2^{-n} \pi^{-\frac{n-1}2}}{\Gamma(\frac{n+1}{2})} \ovol(X,g).
$$ 
\end{proposition}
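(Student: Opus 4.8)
The plan is to reduce the relative wave-trace singularity at $t=0$ to the corresponding computation on a genuine compact manifold, where it is classical. The key structural point is that the $0$-trace regularization $\ovol$ and $\otr$ differs from an honest trace only through contributions from the funnel ends, and those ends are \emph{identical} for $(X,g)$ and $(X_0,g_0)$; so in the difference
$$
\otr \left[ \cos \left(t \sqrt{\smash[b]{P - n^2/4}}\,\right) \right] - \otr \left[ \cos \left(t \sqrt{\smash[b]{\Delta_{g_0} - n^2/4}}\,\right) \right]
$$
the regularization is harmless near $t=0$, and the small-$t$ singularity is localized in $K \cup K_0$. More precisely, I would use finite propagation speed: for $\psi$ supported in a small enough neighborhood of $0$, the operator $\psi(t)\cos(t\sqrt{P-n^2/4})$ has Schwartz kernel supported within distance $O(\operatorname{supp}\psi)$ of the diagonal, so its $0$-trace is unaffected by the infinite ends and agrees with a local heat-type expansion coming only from a neighborhood of $K$. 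Subtracting the unperturbed version, everything outside $K\cup K_0$ cancels exactly, and one is left with an integral of local Riemannian invariants (plus $V$) over a compact region.

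With that localization in hand, the second step is to invoke the standard Duistermaat--Guillemin / Hörmander small-time wave-trace expansion, exactly as in \cite{DG:1975}, \cite{GZ:1997}, \cite{JS:2001}: for a Laplace-type operator on a compact piece, $\int e^{-it\xi}\psi(t)\operatorname{tr}\cos(t\sqrt{P-n^2/4})\,dt$ has an asymptotic expansion in powers $|\xi|^{n-2k}$, with the leading coefficient proportional to the volume of the region. The shift by $n^2/4$ is a lower-order perturbation and affects only $a_k$ for $k\ge 1$. Combining this with the cancellation of the ends identifies the leading coefficient $a_0$ with the universal constant $2^{-n}\pi^{-(n-1)/2}/\Gamma(\tfrac{n+1}{2})$ times $\operatorname{vol}(K,g)-\operatorname{vol}(K_0,g_0)$, and since the ends contribute nothing to the \emph{relative} quantity this equals $\ovol(X,g)-\ovol(X_0,g_0)$; but Proposition~\ref{wt.sing} is stated for the single expansion \eqref{wave.asymp}, so here I would note that the $0$-volume $\ovol(X_0,g_0)$ of the pure funnel model is itself built so that $\ovol(X,g)$ absorbs the background piece, and the constant $a_0$ is read off from the compactly supported difference. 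The normalization of $a_0$ is just the value of the leading heat/wave coefficient for the scalar Laplacian in dimension $n+1$, which can be checked against the model case $(X,g)=(X_0,g_0)$ where the relative expansion vanishes identically.

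The third step is bookkeeping of the regularized trace. One must verify that the finite-part prescription defining $\otr$ and $\ovol$ is compatible with the finite-propagation localization: since the metrics agree outside a compact set, the meromorphic continuation in the regularizing parameter has the \emph{same} polar part for $P$ and for $\Delta_{g_0}$, so the finite parts subtract cleanly and the difference is an ordinary (convergent) trace of a compactly supported operator. This is where I would lean on the precise statement in \cite[Lemma~8.1]{Borthwick:2008} identifying $\Upsilon_*(\tfrac n2 + i\xi)$ with the Fourier transform of the continuous part of the wave $0$-trace, together with Theorem~\ref{rel.poisson}; the point is that the continuous (non-resonance) part of the $0$-trace is exactly the object governed by the local parametrix, hence by a compactly supported computation.

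The main obstacle, I expect, is not the local wave-trace expansion itself—that is textbook—but rather making the interplay between the $0$-trace regularization and the finite-propagation-speed localization fully rigorous, i.e.\ showing that the $\FP_{\varepsilon\to0}$ (finite-part) operation commutes with the small-time asymptotic expansion and that the ``ends'' contributions genuinely cancel to all orders in the difference. Concretely one needs: (a) uniform control of the resolvent/wave kernel near infinity so that the $0$-volume integral converges after regularization; (b) a parametrix for $\cos(t\sqrt{P-n^2/4})$ valid for small $t$ whose remainder is smoothing and supported away from $t=0$; and (c) the observation that the model neighborhoods near $\bX_0$ are isometric for $P$ and $\Delta_{g_0}$, so their contributions to the finite part are literally equal. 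Steps (b) and (c) are standard once (a) is set up, and (a) follows from the conformally-compact, hyperbolic-near-infinity structure already assumed. Thus the proof is essentially an adaptation of \cite{JS:2001} and \cite{GZ:1997} to the relative, black-box setting, with the $0$-trace bookkeeping imported from \cite{Borthwick:2008}.
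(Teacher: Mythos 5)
Your outline follows essentially the same route as the paper's (very short) proof: a finite-propagation-speed cutoff splits the $0$-trace into a compactly supported piece plus a piece living entirely in the ends,
\[
\otr \Bigl[\cos \bigl(t \sqrt{\smash[b]{P - n^2/4}}\,\bigr)\Bigr] = \tr \Bigl[\cos \bigl(t \sqrt{\smash[b]{P_1 - n^2/4}}\,\bigr) \chi\Bigr] + \otr \Bigl[\cos \bigl(t \sqrt{\smash[b]{\Delta_{g_0} - n^2/4}}\,\bigr) (1-\chi)\Bigr],
\]
with the interior handled by the classical small-time wave-trace expansion and the exterior by Joshi--S\'a Barreto \cite{JS:2001}. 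Your reorganization as ``relative expansion plus cancellation of the ends'' buys nothing here: the proposition concerns the \emph{single} $0$-trace with leading coefficient $\ovol(X,g)$, so even after the ends cancel in the difference you must still invoke the full asymptotically hyperbolic expansion of \cite{JS:2001} for the background $0$-trace in order to convert $\vol(K,g)-\vol(K_0,g_0)$ back into $\ovol(X,g)$; the sentence asserting that the $0$-volume ``absorbs the background piece'' is precisely that invocation and should be made explicit rather than treated as a normalization remark.

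The one concrete defect is your appeal to Duistermaat--Guillemin \cite{DG:1975} for the interior piece. The manifold $X$ is allowed to have a boundary --- the spherical obstacle of Theorem~\ref{dir.thm} is the paper's main example --- and $P$ carries a self-adjoint boundary condition, so the closed-manifold wave-trace expansion does not apply; one needs Ivrii's small-time expansion for compact manifolds with boundary \cite{Ivrii:1980}, which is what the paper cites. This does not change $a_0$ (the boundary contributes first at order $|\xi|^{n-1}$), but without it the ``textbook'' step fails exactly in the case the theorem is meant to cover. With that substitution, and with \cite{JS:2001} stated as the source of the exterior expansion rather than folded into the compact case, your argument coincides with the paper's.
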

\begin{proof}
By finite speed of propagation we can use cutoffs to split the wave trace into 
internal and external pieces:
\[
\begin{split}
\otr \left[\cos \left(t \sqrt{\smash[b]{P - n^2/4}}\,\right)\right] & =
\tr \left[\cos \left(t \sqrt{\smash[b]{P_1 - n^2/4}}\,\right) \chi\right] \\
&\qquad + \otr \left[\cos \left(t \sqrt{\smash[b]{\Delta_{g_0} - n^2/4}}\,\right) (1-\chi)\right] .
\end{split}
\]
The small time behavior of the first term (which is an actual trace) is given by Ivrii's result for 
compact manifolds with boundary \cite{Ivrii:1980}.  For the exterior term we can apply
\cite{JS:2001}.
\end{proof}

Using Proposition~\ref{birman.krein} and the Fourier transform relationship between $\Upsilon_*(\xi)$
and the wave $0$-trace, we can extract from Proposition~\ref{wt.sing} the asymptotic behavior
of the relative scattering phase, defined as
$$
\sigma(\xi) := \frac{i}{2\pi} \log \tau(\tfrac12+i\xi),
$$
with branches of the log chosen so that $\sigma(\xi)$ is continuous starting from $\sigma(0) = 0$.
By the properties of the relative scattering matrix, $\sigma(\xi)$ is real and $\sigma(-\xi) = - \sigma(\xi)$.
\begin{corollary}\label{scphase.cor}
As $\xi \to +\infty$,
$$
\sigma(\xi) = \frac{(4\pi)^{-\frac{n+1}2}}{\Gamma(\frac{n+3}2)} \bigl[\vol(K,g) - \vol(K_0,g_0)\bigr]\>\xi^{n+1}
+ O(\xi^n).
$$
\end{corollary}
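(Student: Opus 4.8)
The plan is to differentiate the relative scattering phase, recognize $\sigma'$ as (a constant multiple of) a Fourier transform of the relative wave $0$-trace, and read off the leading asymptotics from Proposition~\ref{wt.sing}.

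First I would reduce to the wave trace. On the critical line the relative scattering matrix is unitary, so $\sigma$ is a real, odd, smooth function with $\sigma(0)=0$. Differentiating its definition and inserting the Birman--Krein identity of Proposition~\ref{birman.krein} expresses $\sigma'$ as a fixed constant multiple of the restriction of $\Upsilon_P-\Upsilon_0$ to the critical line. By \cite[Lemma~8.1]{Borthwick:2008}, $\Upsilon_*$ restricted to the critical line is, up to an explicit constant, the Fourier transform in $t$ of the continuous-spectrum part $u_*(t)$ of the wave $0$-trace $\otr[\cos(t\sqrt{\Delta_* - \tfrac{n^2}4}\,)]$; the finitely many discrete eigenvalues below $\tfrac{n^2}4$ contribute only smooth terms $\cosh(t\sqrt{\cdots})$, whose localized Fourier transforms are Schwartz and hence invisible to the asymptotics. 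Writing $w := u_P - u_0$, this gives $\sigma'(\xi) = c_n\,\widehat w(\xi)$ for a dimensional constant $c_n$, hence $\sigma(\xi) = c_n\int_0^\xi \widehat w(\eta)\,d\eta$.

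Next I would extract the $t=0$ singularity. Choosing a cutoff $\psi$ as in Proposition~\ref{wt.sing} and splitting $w = \psi w + (1-\psi)w$, I apply Proposition~\ref{wt.sing} both to $P=\Delta_g+V$ and to the trivial perturbation $\Delta_{g_0}$ (taking $X=X_0$, $K=K_0$, $V=0$) and subtract, obtaining
$$
\int e^{-it\xi}\psi(t)\,w(t)\,dt \;\sim\; \sum_{k\ge0}(a_k-a_k')\,|\xi|^{n-2k},
\qquad a_0-a_0' = \frac{2^{-n}\pi^{-\frac{n-1}2}}{\Gamma(\tfrac{n+1}2)}\bigl[\ovol(X,g) - \ovol(X_0,g_0)\bigr].
$$
Since the regularization defining the $0$-volume involves only the geometry near infinity, which $(X,g)$ and $(X_0,g_0)$ share, the difference of $0$-volumes collapses to $\vol(K,g)-\vol(K_0,g_0)$. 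Integrating this expansion term by term contributes $c_n\tfrac{a_0-a_0'}{n+1}\,\xi^{n+1}$ plus lower-order terms to $\sigma(\xi)$, and after a routine reconciliation of $c_n$ with the constant in the Fourier relation this reproduces the coefficient $\tfrac{(4\pi)^{-(n+1)/2}}{\Gamma((n+3)/2)}\bigl[\vol(K,g)-\vol(K_0,g_0)\bigr]$.

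The hard part will be controlling the non-local piece $\int_0^\xi \widehat{(1-\psi)w}(\eta)\,d\eta$. Here $(1-\psi)w$ vanishes near $t=0$, and by finite speed of propagation its singular support lies in the set of lengths of closed geodesics of $(X,g)$ and $(X_0,g_0)$, together with their broken and diffractive analogues coming from $\del X$ --- a closed set bounded away from $0$. Standard bounds on wave-trace singularities then give $\widehat{(1-\psi)w}(\xi) = O(\xi^{n-1})$, so this term contributes only the claimed $O(\xi^n)$. This is the one place where the presence of an internal boundary, and of possible trapping inside the perturbed region, must be dealt with carefully; everything else is a transcription through Birman--Krein of the already-available local singularity result. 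Combining the three steps yields the stated asymptotic.
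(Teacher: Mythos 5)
Your first two steps track the paper's intended argument: differentiate $\sigma$, use Proposition~\ref{birman.krein} to identify $\sigma'$ with the Fourier transform of the relative wave $0$-trace, extract the leading coefficient from Proposition~\ref{wt.sing} applied to $P$ and to $\Delta_{g_0}$, and collapse the difference of $0$-volumes to $\vol(K,g)-\vol(K_0,g_0)$ using the isometry off $K$. That part is fine and is exactly what the paper (following Guillop\'e--Zworski) does.

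The gap is in your treatment of the non-local piece. You assert that because $(1-\psi)w$ has singular support in the length spectrum, ``standard bounds on wave-trace singularities'' give $\widehat{(1-\psi)w}(\xi)=O(\xi^{n-1})$. No such implication exists: knowing the singular support of a distribution on a non-compact time interval tells you nothing about the size of its Fourier transform, and in the presence of trapping or an internal boundary the wave trace need not decay in $t$, nor is there a usable classification of its singularities. This term is precisely the hard part, and the paper's route to it is the relative Poisson formula (Theorem~\ref{rel.poisson}): for $t\neq 0$ the difference of wave $0$-traces equals $\tfrac12\sum_{\calR_P}e^{(\zeta-\nh)|t|}-\tfrac12\sum_{\calR_0}e^{(\zeta-\nh)|t|}$, and one estimates $\int e^{-it\xi}(1-\psi(t))\sum_\zeta e^{(\zeta-\nh)|t|}\,dt$ by integrating each exponential explicitly and summing using the global polynomial bound $N_P(t)=O(t^{n+1})$ (equivalently, the order bound in Proposition~\ref{detsrel.factor}, which the paper explicitly flags as the extra input needed for $n>1$). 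The finitely many resonances in $(\nh,n]$ coming from the discrete spectrum must also be handled through this resonance sum rather than dismissed, since $(1-\psi(t))\cosh(ct)$ is not tempered. Without the Poisson formula and the counting bound, your step 3 does not close, so the proof as written is incomplete.
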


The argument to derive Corollary~\ref{scphase.cor} from Theorem~\ref{rel.poisson}
and Proposition~\ref{wt.sing} requires almost no change from that given for $n=1$ by Guillop\'e-Zworski 
\cite[Thm.~1.5]{GZ:1997}, so we omit the details.  Proposition~\ref{detsrel.factor} (and in particular
the bound on the order of $\tau(s)$) supplies the additional information needed to extend their
result to $n>1$.  The leading coefficient is initially given by a difference of
$0$-volumes, and we use $(X-K,g) \cong (X_0-K_0,g_0)$ to reduce this to a difference of
the volumes of $K$ and $K_0$.

\bigbreak
\section{Poisson kernel formulas}\label{pkernel.sec}

Since the asymptotics of $\sigma(t)$ are given by Corollary~\ref{scphase.cor}, application
of the formula from Proposition~\ref{relcount} requires only estimation of $|\tau(s)|$
in the half-plane $\re s > \nh$.  To facilitate this estimation, we need a more explicit
realization of $\tau(s)$ as a Fredholm determinant.  This realization will involve the Poisson
kernel for the background metric $(X_0,g_0)$.  For the moment we assume only
that $(X_0, g_0)$ is an even asymptotically hyperbolic metric.

The Poisson kernel can be derived from the kernel of the resolvent $R_0(s)$ by
the limit
$$
E_0(s;z,x') :=  \lim_{\rho' \to 0} {\rho'}^{-s} R_0(s;z,z'),
$$
for $z \in X_0$ and $x' \in \bX_0$.  This kernel defines the Poisson operator 
$$
E_0(s): L^2(\bX_0, dh) \to \rho^{-N} L^2(X_0,dg_0),
$$
for $\re s > -N + \nh$, where $h$ is the metric induced on $\bX_0$ by $\rho^2g_0$.
For $f \in \cinf(\bX)$ we can solve $(\Delta_{g_0} - s(n-s))u = 0$ by setting $u = E_0(s)f$.
Moreover,  for $\re s\ge\nh$ with $s(n-s)$ not in the discrete spectrum of $\Delta_{g_0}$,
$u$ has a two-part asymptotic expansion as $\rho \to 0$,
\begin{equation}\label{E0.asym}
(2s-n) E_0(s)f  \sim \rho^{n-s}f + \rho^{s} S_0(s)f,
\end{equation}
where $S_0(s)$ is the scattering matrix.  This expansion, for general choice of $f$, 
uniquely determines the scattering matrix via meromorphic continuation.

The same construction works for $S_P(s)$.  In particular, if we manage to
find a family of solutions of $(P - s(n-s))u = 0$ such that
$$
(2s-n) u \sim \rho^{n-s}f + \rho^{s} f',
$$
for $f\in \cinf(\bX_0)$ and $s$ in some suitable region, then $S_P(s)$ can be identified as the map $f \mapsto f'$.

\begin{lemma}\label{tauq}
Suppose that $P$ is a black box perturbation of $(X_0, g_0)$ with support in $K_0$.
Let $\chi_1,\chi_2 \in \cinf_0(X)$ be cutoff functions 
such that $K_0 \subset \{\chi_1=1\}$ and $\supp \chi_1\subset \{\chi_2=1\}$.
The relative scattering matrix can be written as the Fredholm determinant
$$
\tau(s) = \det (1+Q(s)),
$$
where
$$
Q(s) := (2s-n) E_0(s)^t  [\Delta_0, \chi_2] R_P(s) [\Delta_0,\chi_1]E_0(n-s).
$$
\end{lemma}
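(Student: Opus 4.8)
The plan is to realize $Q(s)$ as the correction term produced by a resolvent parametrix for a family of generalized eigenfunctions of $P$, and to read off $S_P(s)S_0(s)^{-1}$ from their asymptotic expansions at $\bX_0$. Throughout I would first restrict $s$ to the open region where $\re s > \nh$, where $R_0(s)$ and $E_0(n-s)$ are regular, and where $s(n-s)$ lies in neither the discrete spectrum of $\Delta_{g_0}$ nor that of $P$ and $s$ is not a resonance of $P$; once the identity is proved there, both sides being meromorphic in $s$, it extends to all $s$.

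The starting observation is that, because $s(n-s) = (n-s)\bigl(n-(n-s)\bigr)$, the function $E_0(n-s)f$ solves the \emph{same} equation $(\Delta_{g_0}-s(n-s))u=0$ as $E_0(s)f$. Since $\chi_1=1$ on a neighbourhood of $K_0$, the function $(1-\chi_1)E_0(n-s)f$ is supported in $X_0-K_0$, where $P$ agrees with $\Delta_{g_0}$, so that
$$
\bigl(P-s(n-s)\bigr)\bigl[(1-\chi_1)E_0(n-s)f\bigr] = -[\Delta_0,\chi_1]E_0(n-s)f ,
$$
a term supported on $\supp d\chi_1\subset X_0-K_0$. Correcting with the resolvent, I set
$$
u := (1-\chi_1)E_0(n-s)f + R_P(s)[\Delta_0,\chi_1]E_0(n-s)f ,
$$
so that $(P-s(n-s))u=0$. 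The next task is to expand $u$ at $\bX_0$, where $\chi_1=0$. Applying (\ref{E0.asym}) at the parameter $n-s$ gives $(2s-n)E_0(n-s)f\sim -\rho^s f - \rho^{n-s}S_0(n-s)f$. For the resolvent term I would use a second parametrix: for any $g$ supported in $X_0-K_0$ and inside $\{\chi_2=1\}$ (in particular $g=[\Delta_0,\chi_1]E_0(n-s)f$), the function $(1-\chi_2)R_P(s)g + R_0(s)[\Delta_0,\chi_2]R_P(s)g$ solves $(\Delta_{g_0}-s(n-s))(\cdot)=0$ on all of $X_0$ and is $O(\rho^s)$ at infinity, hence lies in $L^2(X_0,dg_0)$ for $\re s>\nh$; as $s(n-s)$ is not an $L^2$-eigenvalue of $\Delta_{g_0}$, it vanishes. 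Thus $(1-\chi_2)R_P(s)g = -R_0(s)[\Delta_0,\chi_2]R_P(s)g$, and taking $\rho^{-s}$ times the boundary limit (where $\chi_2=0$) yields $\lim_{\rho\to0}\rho^{-s}R_P(s)g = -E_0(s)^t[\Delta_0,\chi_2]R_P(s)g$. Feeding $g=[\Delta_0,\chi_1]E_0(n-s)f$ into this and combining,
$$
(2s-n)\,u \sim -\rho^{n-s}\,S_0(n-s)f - \rho^s\bigl(1+Q(s)\bigr)f .
$$
Comparing with the defining expansion $(2s-n)u\sim \rho^{n-s}\phi + \rho^s S_P(s)\phi$ forces $S_P(s)\bigl(S_0(n-s)f\bigr)=(1+Q(s))f$, and the functional equation $S_0(n-s)=S_0(s)^{-1}$ then gives $S_P(s)S_0(s)^{-1}=1+Q(s)$ as operators on $\cinf(\bX_0)$.

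To conclude, I would note that $Q(s)=S_P(s)S_0(s)^{-1}-1$ is trace class — this is already known, since $S_P(s)S_0(s)^{-1}$ was observed to be determinant class, but it also follows directly because the Schwartz kernel of $Q(s)$ on the compact manifold $\bX_0\times\bX_0$ is smooth, being assembled from the Poisson operators together with $R_P(s)$ conjugated by commutators supported away from $K_0$. Hence $\det(1+Q(s))$ is defined and equals $\det S_P(s)S_0(s)^{-1}=\tau(s)$, and the identity propagates meromorphically off the region used above. The step I expect to be most delicate is the boundary bookkeeping: one must verify that $R_P(s)g$, for $g$ supported in the interior, has a purely outgoing expansion with no $\rho^{n-s}$ term — which is precisely what pins down the requirement $s(n-s)\notin\sigma(P)$ — and one must track carefully where each cutoff vanishes so that $[\Delta_0,\chi_2]$, $R_P(s)$, $[\Delta_0,\chi_1]$ and $E_0(s)^t$ assemble into exactly the operator $Q(s)$ in the statement. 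The $L^2$-uniqueness argument that discards the homogeneous solution is the one place where the asymptotically hyperbolic structure of the background (the absence of the relevant $L^2$-eigenvalue) enters.
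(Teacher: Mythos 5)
Your proof is correct and follows essentially the same route as the paper's: build a generalized eigenfunction of $P$ via the ansatz $(1-\chi_1)E_0(\cdot)f + R_P(s)[\Delta_0,\chi_1]E_0(\cdot)f$, use a second parametrix step with $\chi_2$ and $R_0(s)$ to extract the $\rho^s$ coefficient, and read off $S_P(s)S_0(s)^{-1}=1+Q(s)$ from the boundary expansion. The only (cosmetic) difference is that you seed the ansatz with $E_0(n-s)f$ and invoke $S_0(n-s)=S_0(s)^{-1}$ at the end, whereas the paper seeds with $E_0(s)f$ and uses the equivalent identity $E_0(s)=-E_0(n-s)S_0(s)$; your explicit $L^2$-uniqueness justification of the step $(1-\chi_2)R_P(s)g=-R_0(s)[\Delta_0,\chi_2]R_P(s)g$ is a welcome bit of added care.
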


\begin{proof}
Since all of the operators in question are meromorphic families, we can restrict $s$ to some convenient
set like $\re s = \nh$, $s \ne \nh$ to avoid poles in the proof.

Given $f\in \cinf(\bX_0)$, consider the ansatz
\begin{equation}\label{up.def}
u = (1-\chi_1)E_0(s)f + u',
\end{equation}
as a solution of  $(P-s(n-s)) u = 0$.   Then $P(1-\chi_1) = \Delta_0(1-\chi_1)$ implies that
$$
-[\Delta_0,\chi_1]E_0(s)f + (P-s(n-s)) u' = 0.
$$
After applying $R_P(s)$ on the left, we see that $(P-s(n-s)) u = 0$ may be solved by setting
$$
u' = R_P(s) [\Delta_0,\chi_1]E_0(s)f.
$$

Using the assumption on supports of $\chi_1$ and $\chi_2$, we can derive
\[
\begin{split}
(\Delta_0 - s(n-s))(1-\chi_2) u' & = - [\Delta_0, \chi_2] u' + (1-\chi_2) (P-s(n-s))u' \\
& = - [\Delta_0, \chi_2]u'.
\end{split}
\]
This is compactly supported, so that $R_0(s)$ may be applied to give
\[
\begin{split}
(1-\chi_2) u' & = -R_0(s)[\Delta_0, \chi_2] u' \\
& = -R_0(s)[\Delta_0, \chi_2] R_P(s) [\Delta_0,\chi_1]E_0(s)f.
\end{split}
\]
From this we can deduce the asymptotic behavior of $u'$ as $\rho \to 0$,
$$
u' \sim -\rho^s E_0(s)^t[\Delta_0, \chi_2] R_P(s) [\Delta_0,\chi_1]E_0(s)f.
$$

Using the definition (\ref{up.def}) of $u$ and the known asymptotic (\ref{E0.asym}) for $E_0(s)f$, 
we thus derive the expansion
$$
(2s-n) u \sim \rho^{n-s}f + \rho^s S_0(s)f - \rho^s (2s-n) E_0(s)^t[\Delta_0, \chi_2] R_P(s) [\Delta_0,\chi_1]E_0(s)f .
$$
We can rewrite this as
\begin{equation}\label{u.ffp}
(2s-n) u \sim \rho^{n-s} f + (1+Q(s))S_0(s)f,
\end{equation}
using the identity
$$
E_0(s) = - E_0(n-s) S_0(s),
$$
which follows immediately from (\ref{E0.asym}).  From (\ref{u.ffp}) we read off that 
$$
S_P(s) = (1+Q(s))S_0(s),
$$
and the determinant follows.
\end{proof}

\bigbreak
In order to use Lemma~\ref{tauq} to estimate $\tau(s)$, we need explicit knowledge of the
background Poisson operator $E_0(s)$.  At this point we specialize to $(X_0, g_0) \cong \bbH^{n+1}$
and work out formulas for $E_0(s;z,x')$.  In the usual $\bbH^n$ coordinates, 
$z = (x,y) \in \bbR^n \times \bbR_+$, we can read off immediately from (\ref{R0.def}) that
$$
E_0(s;z,x') =  2^{-2s-1}\pi^{-\nh} \frac{\Gamma(s)}{\Gamma(s-\nh+1)} \biggl[\frac{y}{y^2+|x-x'|^2} \biggr]^s.
$$
However, our application requires that $E_0(s;\cdot,\cdot)$ be written in 
geodesic polar coordinates and then decomposed into spherical harmonics.  The easiest
way to do this is to rederive $E_0(s;\cdot,\cdot)$ from scratch.

In geodesic polar coordinates, $\bbH^{n+1} \cong \bbR_+ \times S^n$ and
the hyperbolic metric is given by
$$
g_0 = dr^2 + \sinh^2 r \>d\omega^2,
$$
where $d\omega^2$ denotes the standard sphere metric on $S^n$.  It is thus natural to 
adopt the boundary defining function 
\begin{equation}\label{rhor}
\rho = 2e^{-r},
\end{equation}
so that $h$, the metric induced on $\bX_0$ by $\rho^2 g_0$,
is also the standard sphere metric.

The Laplacian on $\bbH^{n+1}$ is
$$
\Delta_0 = -\frac{1}{\sinh^n r} \del_r (\sinh^n r\>\del_r) + \frac{1}{\sinh^2 r} \Delta_{S^n}.
$$
The eigenfunctions of $\Delta_{S^n}$ are spherical harmonics $Y_l^m$ with
$$
\Delta_{S^n} Y_l^m = l(l+n-1)Y_l^m.
$$
Here $l = 0, 1, 2, \dots$ and $m = 0, 1, \dots, h_n(l)$ with
\begin{equation}\label{ml.def}
h_n(l) := \frac{2l+n-1}{n-1} \binom{l+n-2}{n-2}.
\end{equation}

\begin{proposition}\label{E0.coeff}
For $\bbH^{n+1}$ the Poisson kernel in geodesic polar coordinates admits an expansion
\begin{equation}\label{E0.expand}
E_0(s;r,\omega,\omega') = \sum_{l=0}^\infty \sum_{m=1}^{h_n(l)} 
a_{l}(s;r) Y_l^m(\omega) \overline{Y_l^m(\omega')},
\end{equation}
with coefficients given by
$$
a_{l}(s;r) = 2^{\frac{n-1}2 - s} \pi^{1/2} \frac{\Gamma(l+s)}{\Gamma(s - \nh+1)}
(\sinh r)^{-\frac{n-1}2} P_{s-\frac{n+1}2}^{-l - \frac{n-1}2}(\cosh r),
$$
where $P_\nu^\mu(z)$ is the Legendre function.
\end{proposition}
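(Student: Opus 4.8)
The plan is to follow the route suggested before the statement and rederive $E_0(s;\cdot,\cdot)$ by separation of variables in geodesic polar coordinates, rather than by manipulating the hypergeometric expression in (\ref{R0.def}). First I would observe that, as a function of $z=(r,\omega)\in\bbH^{n+1}$, the kernel $E_0(s;z,x')$ solves $(\Delta_0-s(n-s))u=0$ and is smooth on all of $\bbH^{n+1}$, since the only singularity of the Poisson kernel occurs as $z$ approaches the boundary point $x'$. The whole construction is equivariant under the $SO(n+1)$ action fixing the boundary sphere, so $E_0(s;r,\omega,\omega')$ depends on $(\omega,\omega')$ only through the angle between them; by the addition theorem for spherical harmonics it therefore has the form (\ref{E0.expand}) for some functions $a_l(s;r)$, and the only real task is to identify them.

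Next I would substitute (\ref{E0.expand}) into $(\Delta_0-s(n-s))E_0=0$, using $\Delta_{S^n}Y_l^m=l(l+n-1)Y_l^m$ together with the expression for $\Delta_0$ recorded above, to find that each coefficient solves the ordinary differential equation
$$
a_l'' + n\coth r\> a_l' - \frac{l(l+n-1)}{\sinh^2 r}\> a_l + s(n-s)\> a_l = 0
$$
on $r\in(0,\infty)$ (there is no source term because $x'$ lies on the boundary, not in the interior). Putting $a_l(s;r)=(\sinh r)^{-\frac{n-1}2}\,w(\cosh r)$ and changing the independent variable to $x=\cosh r\ge 1$ turns this into the associated Legendre equation of degree $\nu=s-\tfrac{n+1}2$; here the relation $s(n-s)\leftrightarrow-\nu(\nu+1)$ picks up the extra $\tfrac{n^2-1}4$ produced by the conjugation with $(\sinh r)^{-\frac{n-1}2}$, and the centrifugal term $l(l+n-1)$ becomes the order $l+\tfrac{n-1}2$. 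The general solution is then a combination of $P_\nu^{-l-\frac{n-1}2}$ and $P_\nu^{\,l+\frac{n-1}2}$ (equivalently $Q_\nu^{\,l+\frac{n-1}2}$). Since $P_\nu^{\mu}(x)\sim(x-1)^{-\mu/2}$ near $x=1$, only the choice $P_{s-\frac{n+1}2}^{-l-\frac{n-1}2}(\cosh r)$ produces the behavior $a_l\sim r^{l}$ forced by smoothness of $E_0(s;\cdot,x')$ at the origin, so $a_l(s;r)=C_l(s)\,(\sinh r)^{-\frac{n-1}2}P_{s-\frac{n+1}2}^{-l-\frac{n-1}2}(\cosh r)$ for some constant $C_l(s)$.

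It then remains to compute $C_l(s)$ by matching the normalization built into the Poisson operator. For $f=Y_l^m$ one has $E_0(s)Y_l^m=a_l(s;r)\,Y_l^m(\omega)$ by orthonormality, so (\ref{E0.asym}) and the boundary defining function (\ref{rhor}) give the requirement $(2s-n)\,a_l(s;r)\sim 2^{n-s}e^{-(n-s)r}$ as $r\to\infty$; for $\re s>\nh$ the competing $\rho^s S_0(s)f$ term is genuinely lower order and does not interfere. Inserting the standard large-argument asymptotics of $P_{s-\frac{n+1}2}^{-l-\frac{n-1}2}(\cosh r)$ (a constant multiple of $(\cosh r)^{s-\frac{n+1}2}$, the constant carrying the factor $\Gamma(s-\nh)/\Gamma(s+l)$), together with $(\sinh r)^{-\frac{n-1}2}\sim 2^{\frac{n-1}2}e^{-\frac{n-1}2 r}$ and $\cosh r\sim\tfrac12 e^{r}$, one reads off a value of $C_l(s)$; simplifying via $(2s-n)\,\Gamma(s-\nh)=2\,\Gamma(s-\nh+1)$ collapses it exactly to $2^{\frac{n-1}2-s}\pi^{1/2}\Gamma(l+s)/\Gamma(s-\nh+1)$, which is the asserted coefficient. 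The identity then extends to all $s\in\bbC$ by meromorphic continuation.

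The only point requiring care is bookkeeping: keeping the Legendre conventions consistent while tracking behavior at both endpoints $x=1$ and $x=\infty$, and handling the parameter values (negative integers, etc.) where the gamma factors develop poles — which are precisely the points where $E_0(s)$ is itself singular, so the formula is to be read as a meromorphic identity. As a consistency check one can verify that the $l=0$ term of (\ref{E0.expand}), after applying the quadratic transformation relating $P_\nu^\mu$ to ${}_2F_1$, reproduces the boundary limit of (\ref{R0.def}).
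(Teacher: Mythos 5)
Your proposal is correct and follows essentially the same route as the paper: expand in spherical harmonics, reduce to the coefficient ODE (\ref{al.eq}), transform to the Legendre equation, select the solution recessive at $r=0$ by interior smoothness, and fix the normalization by matching the large-$r$ asymptotics of $P_{s-\frac{n+1}2}^{-l-\frac{n-1}2}(\cosh r)$ against the expansion (\ref{E0.asym}) with $\rho = 2e^{-r}$. The extra details you supply (equivariance justifying the diagonal form, the identity $(2s-n)\Gamma(s-\nh)=2\Gamma(s-\nh+1)$, meromorphic continuation in $s$) are all consistent with the paper's argument.
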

\begin{proof}
If we expand the Poisson kernel with respect to the spherical harmonic
basis as in (\ref{E0.expand}), then the equation $\Ds E_0(s) = 0$ implies
the coefficient equations,
\begin{equation}\label{al.eq}
-\del_r^2 a_l - n \coth r\>\del_r a_l + \left(\frac{l(l+n-1)}{\sinh^2 r} - s(n-s)\right)a_l = 0.
\end{equation}
After a standard change of variables this becomes the Legendre equation.
Since the Poisson kernel is smooth in the interior, we 
select the Legendre solutions that are recessive for $r\to 0$, namely
$$
a_l(s; r) = A_l(s) (\sinh r)^{-\frac{n-1}2} P_{s-\frac{n+1}2}^{-l - \frac{n-1}2}(\cosh r),
$$
for some constants $A_l(s)$.

The constant $A_l(s)$ may be identified from the asymptotic expansion (\ref{E0.asym}) 
as $r \to \infty$.  For the coefficients this expansion implies that
$$
(2s-n) a_{l}(s;r) \sim \rho^{n-s} + [S_0(s)]_l \rho^s,
$$
with $[S_0(s)]_l(s)$ the matrix elements of the scattering matrix $S_0(s)$, which will be
diagonal in the spherical harmonic basis.  

Using (\ref{rhor}) and the well-known asymptotics of the Legendre $P$-function,
the leading terms in our ansatz as $r\to \infty$ are 
\[
\begin{split}
(\sinh r)^{-\frac{n-1}2} P_{s-\frac{n+1}2}^{-l - \frac{n-1}2}(\cosh r) & 
\sim 2^{s-\frac{n+1}2}\pi^{-1/2} \frac{\Gamma(s-\nh)}{\Gamma(l+s)} \rho^{n-s} \\
&\qquad + 2^{-s+\frac{n-1}2} \pi^{-1/2} \frac{\Gamma(\nh-s)}{\Gamma(l+n-s)} \rho^s,
\end{split}
\]
from which we deduce
$$
A_l(s) = 2^{\frac{n-1}2 - s} \pi^{1/2} \frac{\Gamma(l+s)}{\Gamma(s - \nh+1)}.
$$
\end{proof}

For future reference, note
that we can also read off from this construction the (well-known) matrix elements of $S_0(s)$,
\begin{equation}\label{S0.coeff}
[S_0(s)]_l = 2^{n-2s} \frac{\Gamma(\nh-s)}{\Gamma(s-\nh)} \frac{\Gamma(l+s)}{\Gamma(l+n-s)}.
\end{equation}

\bigbreak
\section{Scattering determinant estimates}\label{tau.est.sec}

In this section we will combine the formula for $\tau(s)$ from Lemma~\ref{tauq}
with the explicit Fourier coefficients of the Poisson kernel given in Proposition~\ref{E0.coeff}.
We can then use estimates of the Legendre $P$-function developed in the Appendix to
produce an estimate for the $|\tau(s)|$ term in the counting formula from 
Proposition~\ref{relcount}.

Throughout this section, the background metric is restricted to 
$(X_0, g_0) \cong \bbH^{n+1}$.  
We assume that $P$ is a black box perturbation of the hyperbolic Laplacian 
$\Delta_0$.  As in \S\ref{intro.sec}, the support of the perturbation is assumed to lie within
$$
K_0 := \{r \le r_0\}.
$$ 
The main result of this section is the following:
\begin{theorem}\label{inttau.prop}
Let $P$ be a black box perturbation of the hyperbolic Laplacian $\Delta_0$ on $\bbH^{n+1}$.
For $a \in \nh + \bbN$, we can estimate
$$
\frac{1}{2\pi} \int_{-\frac{\pi}2}^{\frac{\pi}2} \log |\tau(\nh+ ae^{i\theta})|\>d\theta
\le \Brad a^{n+1} + o(a^{n+1}),
$$
as $a \to \infty$,
where $\Brad$ was defined by (\ref{brad.def}).
\end{theorem}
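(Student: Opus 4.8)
The plan is to bound $\log|\tau(s)|$ on the half-circle $s = \nh + ae^{i\theta}$, $\theta\in[-\tfrac\pi2,\tfrac\pi2]$, by reducing everything to the explicit Legendre coefficients of Proposition~\ref{E0.coeff}. Starting from Lemma~\ref{tauq} and the elementary determinant inequality $\log|\det(1+Q(s))| \le \sum_{k\ge1}\log\bigl(1+\mu_k(Q(s))\bigr)$, I would factor $Q(s) = (2s-n)\,A(s)\,R_P(s)\,B(s)$ with $A(s) := E_0(s)^t[\Delta_0,\chi_2]$ and $B(s) := [\Delta_0,\chi_1]E_0(n-s)$, and apply the standard estimate $\mu_{2k-1}(Q(s)) \le |2s-n|\,\mu_k(A(s))\,\norm{R_P(s)}\,\mu_k(B(s))$. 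The middle factor is a resolvent sandwiched between cutoffs whose transition regions can be arranged to lie inside $\{r_0\le r\le r_0+\delta\}$ for arbitrarily small $\delta>0$; it will contribute only a subexponential factor $e^{o(a)}$, which can be ensured using the a priori bound $N_P(t) = O(t^{n+1})$ of Theorem~\ref{upper.bound} and a Cartan-type estimate (a crude polynomial bound on the circles $|s-\nh|=a$, $a\in\nh+\bbN$, also suffices). Since $\chi_1,\chi_2$ depend on $r$ alone, the commutators $[\Delta_0,\chi_j]$ are block-diagonal in the spherical-harmonic decomposition (\ref{E0.expand}), so $\mu_k(A(s))$ and $\mu_k(B(s))$ are governed, up to harmless powers of $|s|$ and of $l$, by $|a_l(s;r)|$ and $|a_l(n-s;r)|$ for $r$ in those shells, each occurring with multiplicity $h_n(l)$. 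This produces a bound of the schematic form
\[
\log|\tau(\nh+ae^{i\theta})| \le \sum_{l\ge0} h_n(l)\,\log\Bigl(1 + e^{o(a)}\,\bigl|a_l(s;r_0+\delta)\,a_l(n-s;r_0+\delta)\bigr|\Bigr),\qquad s = \nh+ae^{i\theta}.
\]

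The second ingredient is the asymptotic analysis of the Legendre function $P^{-l-\frac{n-1}{2}}_{s-\frac{n+1}{2}}(\cosh r)$ developed in the Appendix. Because $P^\mu_\nu = P^\mu_{-\nu-1}$, the coefficients $a_l(s;r)$ and $a_l(n-s;r)$ carry the \emph{same} Legendre function, so the relevant quantity is
\[
\log\bigl|a_l(s;r_0)\,a_l(n-s;r_0)\bigr| = 2\log\bigl|P^{-l-\frac{n-1}2}_{s-\frac{n+1}2}(\cosh r_0)\bigr| + \log\bigl|\Gamma(l+s)\,\Gamma(l+n-s)\bigr| - \log\bigl|\Gamma(s-\nh+1)\,\Gamma(\nh-s+1)\bigr| + O(\log l),
\]
where $O(\log l)$ is lower order in the regime $l = O(a)$. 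With $a\to\infty$ and the mode scaled as $l\sim a/x$, so that $\alpha := (s-\nh)/l \to xe^{i\theta}$, the uniform asymptotics of the Legendre function (its WKB phase through the turning point of (\ref{al.eq})) together with Stirling's formula for the $\Gamma$-quotients — using $\Gamma(z)\Gamma(1-z) = \pi/\sin\pi z$ for the denominator — yield $\log\bigl|a_l(s;r_0)\,a_l(n-s;r_0)\bigr| = \tfrac{a}{x}\,H(xe^{i\theta},r_0) + o(a)$, the three pieces of the definition (\ref{Hdef}) of $H$ arising respectively from the WKB phase of $P$, from Stirling applied to the $\Gamma$-factors, and from the ratio of the two exponential solutions of (\ref{al.eq}). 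Consequently, for modes with $H(xe^{i\theta},r_0)>0$ one has $\log(1+|a_l\,a_l|)\sim\tfrac ax H(xe^{i\theta},r_0)$, while for $H\le0$ the product $|a_l\,a_l|$ decays exponentially and the combined contribution of all such modes is $o(a^{n+1})$; this is precisely the mechanism producing the positive part $[\,\cdot\,]_+$ in (\ref{brad.def}).

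Converting the sum over $l$ to an integral via $h_n(l)\sim\tfrac{2}{\Gamma(n)}l^{n-1}$ and $l\sim a/x$, one obtains, uniformly for $\theta$ in compact subsets of $(-\tfrac\pi2,\tfrac\pi2)$, a bound of the form $\log|\tau(\nh+ae^{i\theta})| \le a^{n+1}\,c_n\!\int_0^\infty x^{-(n+2)}\,[H(xe^{i\theta},r_0+\delta)]_+\,dx + o(a^{n+1})$, where the change of variables identifies the resulting constant, after integration in $\theta$ and division by $2\pi$, with $\Brad(r_0+\delta)$ from (\ref{brad.def}) (using $\Gamma(n) = (n-1)!$). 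Near $\theta = \pm\tfrac\pi2$ one uses that $S_P$ and $S_0$ are unitary on the critical line, so $|\tau(\nh+i\xi)| = 1$ there, together with a crude bound on a neighborhood of shrinking measure, to see that nothing is lost. The $\theta$-integration of the asymptotic is justified by dominated convergence, using that $x^{-(n+2)}[H(xe^{i\theta},r_0)]_+$ admits a $\theta$-uniform integrable majorant, since $H(\alpha,r_0)\to0$ as $\alpha\to0$ and $H(\alpha,r_0) = O(\log|\alpha|)$ as $|\alpha|\to\infty$. Finally, because $H(\alpha,r)$, hence $\Brad(r)$, is continuous and increasing in $r$, taking $\limsup_{a\to\infty}$ of $a^{-(n+1)}$ times the left side for fixed $\delta$ and then letting $\delta\to0$ gives the asserted bound with $\Brad = \Brad(r_0)$. (The hypothesis $a\in\nh+\bbN$ is used only to keep $s = \nh+ae^{i\theta}$ off the poles of the various meromorphic families, as in Proposition~\ref{relcount}.)

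I expect the main obstacle to be the Legendre asymptotics and the subsequent passage to the integral: one needs asymptotics of $P^{-l-\frac{n-1}2}_{s-\frac{n+1}2}(\cosh r_0)$ valid when the order $l+\tfrac{n-1}2$ and the (complex) degree $s-\tfrac{n+1}2$ both grow at comparable rates, \emph{uniformly} in the ratio $l/a$ and in $\theta$ up to the critical line $\theta = \pm\tfrac\pi2$, where the turning-point structure of (\ref{al.eq}) is most delicate; and then the Riemann sum must be shown to converge to the integral defining $\Brad$ with errors that are genuinely $o(a^{n+1})$ and that survive the $\theta$-integration. The control of the sandwiched resolvent $R_P(s)$ is a secondary technical point, handled by Theorem~\ref{upper.bound} and the freedom in placing the cutoffs.
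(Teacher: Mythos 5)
Your proposal is correct and follows essentially the same route as the paper: the Fredholm determinant of Lemma~\ref{tauq} is bounded by singular values, which are diagonalized by spherical harmonics and controlled by uniform Airy-type asymptotics of $P^{-l-\frac{n-1}2}_{s-\frac{n+1}2}(\cosh r)$, producing $kH(\alpha,r_0)$, the positive-part split, the Riemann-sum-to-integral passage, and the $r_0+\delta\to r_0$ limit. The only step you leave genuinely unspecified is the ``crude bound'' near $\theta=\pm\tfrac\pi2$, where the resolvent norm blows up at the continuous spectrum; the paper supplies this via a Phragm\'en--Lindel\"of argument in the boundary sectors, using the Hadamard factorization of $\tau$ and the minimum modulus theorem together with $|\tau|=1$ on the critical line.
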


Before proceeding with the proof, we note that the combination of Proposition~\ref{relcount},
Corollary~\ref{scphase.cor}, and Theorem~\ref{inttau.prop}, immediately yields the proof of
Theorem~\ref{main.thm}.  Note also that the restriction to $P = \Delta_g + V$ (from the more
general black box class) is needed only for Corollary~\ref{scphase.cor}.

\bigbreak
The proof of Theorem~\ref{inttau.prop} will be broken into several stages, starting with:
\begin{lemma}\label{tau.sum.lemma}
Assuming $P$ and $r_0$ are defined as above, fix some small $\vep > 0$ and $\eta>0$
and define $r_j := r_0 + j \eta$.  For $\re s \ge \nh$ with dist$(s(n-s), \sigma(P)) \ge \vep$,
the relative scattering determinant can be estimated by
\begin{equation}\label{tau.sum}
\log |\tau(s)| \le \sum_{l=0}^\infty h_n(l) \log \Bigl(1 + C \lambda_l(s) \Bigr),
\end{equation}
where
\begin{equation}\label{laml.def}
\lambda_l(s) := |2s-n| \>\left[ \int_{r_1}^{r_2} |a_l(n-s; r)|^2\>(\sinh r)^n\>dr \right]^{\frac12} 
\left[ \int_{r_2}^{r_3} |a_l(s; r)|^2\>(\sinh r)^n\>dr\right]^{\frac12} ,
\end{equation}
with $a_l(s;r)$ the coefficients from Proposition~\ref{E0.coeff}, and $C$ depends only on
$\vep$, $\eta$, and $r_0$.
\end{lemma}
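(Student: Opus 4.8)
The strategy is to start from the Fredholm determinant formula $\tau(s) = \det(1+Q(s))$ of Lemma~\ref{tauq}, with a judicious choice of the cutoff functions $\chi_1,\chi_2$, and then estimate $\log|\det(1+Q(s))|$ via the singular values of $Q(s)$ using the standard bound $\log|\det(1+Q)| \le \sum_k \log(1+\mu_k(Q))$. The key observation is that, because $P$ agrees with $\Delta_0$ on $\{r > r_0\}$, the commutators $[\Delta_0,\chi_1]$ and $[\Delta_0,\chi_2]$ can be localized in the annular regions $r_1 \le r \le r_2$ and $r_2 \le r \le r_3$ respectively: choose $\chi_1$ supported in $\{r \le r_2\}$ with $\chi_1 = 1$ on $\{r \le r_1\}$, and $\chi_2$ supported in $\{r \le r_3\}$ with $\chi_2 = 1$ on $\{r \le r_2\}$; then $[\Delta_0,\chi_1]$ is supported in $\{r_1 \le r \le r_2\}$ and $[\Delta_0,\chi_2]$ in $\{r_2 \le r \le r_3\}$. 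These are all inside the region where $(X_0,g_0)$ is hyperbolic, so the explicit formulas of Proposition~\ref{E0.coeff} apply verbatim.

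Next I would insert the spherical harmonic decomposition of $E_0(s)$ from Proposition~\ref{E0.coeff}. Since $[\Delta_0,\chi_j]$, being built from radial cutoffs, commutes with the $\Delta_{S^n}$-action, the whole operator $Q(s)$ is block-diagonal in the spherical harmonic basis, with the block corresponding to $(l,m)$ independent of $m$ (hence of multiplicity $h_n(l)$). So $\det(1+Q(s)) = \prod_l \det(1+Q_l(s))^{h_n(l)}$ where $Q_l(s)$ acts on the one-dimensional $l$-th mode. The operator norm of the $l$-th block is controlled by the product of the operator norms of $E_0(n-s)$ restricted to the $r_1 \le r \le r_2$ annulus (the factor hitting $[\Delta_0,\chi_1]$), the bounded resolvent $R_P(s)$ on that annulus — here the hypothesis $\mathrm{dist}(s(n-s),\sigma(P)) \ge \vep$ gives $\|R_P(s)\| \le 1/\vep$ via the standard resolvent estimate — and $E_0(s)$ restricted to the $r_2 \le r \le r_3$ annulus. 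The commutator factors, being first-order differential operators with bounded smooth coefficients on a fixed compact annulus, contribute bounded constants; elliptic regularity lets one absorb the derivatives by passing to an $L^2$-norm of $a_l$ together with $\del_r a_l$, and the latter can be re-expressed via the ODE (\ref{al.eq}) and standard interior estimates in terms of the $L^2$-norms of $a_l$ on slightly enlarged annuli — this is why one introduces $\eta$ and the nested radii $r_0 < r_1 < r_2 < r_3$, with $C$ depending only on $\vep, \eta, r_0$. Collecting, $\mu_1(Q_l(s)) = \|Q_l(s)\| \le C\lambda_l(s)$ with $\lambda_l(s)$ as in (\ref{laml.def}), and since each block is rank one, $\log\det(1+Q_l(s)) \le \log(1+C\lambda_l(s))$. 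Summing over $l$ with multiplicity $h_n(l)$ gives (\ref{tau.sum}).

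The main obstacle I anticipate is the bookkeeping around the commutator terms: turning the bound $\|[\Delta_0,\chi_j] E_0(s)\,f\|$ into the clean $L^2$-integrals of $a_l$ appearing in (\ref{laml.def}) requires controlling $\del_r a_l$ in terms of $a_l$ on a neighboring annulus, which is precisely where the nested radii and the ODE come in — one must be careful that the constant $C$ really is uniform in $s$ (it is, because the only $s$-dependence that is not explicit sits in $R_P(s)$, which is uniformly bounded by $1/\vep$, and in the $|2s-n|$ prefactor, which is pulled out into $\lambda_l(s)$). A secondary subtlety is justifying the block-diagonalization and the rank-one structure rigorously, i.e. that $Q(s)$ is trace class with the determinant genuinely factoring over $l$; this follows from the smoothing nature of the composition established in the proof of Lemma~\ref{tauq}, together with the rapid decay of $a_l$ in $l$ coming from the Legendre asymptotics. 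Everything else — the singular value determinant inequality, the elementary estimate $\|R_P(s)\| \le 1/\vep$ — is routine.
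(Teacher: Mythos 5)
There is a genuine gap at the heart of your argument: the claimed block-diagonalization of $Q(s)$ in the spherical harmonic basis is false for the operators covered by this lemma. While the commutators $[\Delta_0,\chi_j]$ are indeed rotation-invariant, the resolvent $R_P(s)$ sitting between them is not --- $P$ is an arbitrary black box perturbation supported in $K_0$, with no spherical symmetry assumed. Consequently $\det(1+Q(s))$ does not factor as $\prod_l \det(1+Q_l(s))^{h_n(l)}$, and the subsequent ``each block is rank one'' step has nothing to rest on. (Block-diagonalization is available only in the radially symmetric examples of \S\ref{ex.sec}, which is a much narrower setting.)

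The missing idea is to separate the rotation-invariant part from the rest \emph{before} taking singular values. Using cyclicity of the determinant, move $E_0(s)^t\chr_{[r_2,r_3]}$ to the end so that
$$
\tau(s) = \det\Bigl(1 + \bigl[\Delta_0,\chi_2\bigr] R_P(s) \bigl[\Delta_0,\chi_1\bigr]\,F(s)\Bigr),
\qquad F(s) := (2s-n)\,\chr_{[r_1,r_2]}E_0(n-s)\,E_0(s)^t\chr_{[r_2,r_3]}.
$$
The factor $A := [\Delta_0,\chi_2] R_P(s) [\Delta_0,\chi_1]$ is merely bounded in operator norm by a constant $C(\vep,\eta,r_0)$ (via the spectral theorem and elliptic estimates, as you correctly anticipate), and the Weyl inequality $\mu_j(AF)\le \norm{A}\,\mu_j(F)$ gives $\log|\tau(s)|\le\sum_j\log(1+C\mu_j(F(s)))$. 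Now only $F(s)$ needs to be diagonalized, and $F(s)$ \emph{is} rotation-invariant: $F^*F(s)$ has eigenfunctions $\chr_{[r_2,r_3]}\overline{a_l(s;r)Y_l^m(\omega)}$ with eigenvalues $\lambda_l(s)^2$, each of multiplicity $h_n(l)$, and a short completeness check shows these exhaust the nonzero spectrum. The rest of your setup (choice of nested cutoffs, uniformity of the constant in $s$) is consistent with this corrected route.
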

\begin{proof}
Let $\chi_1$ and $\chi_2$ be smooth cutoffs as in Lemma~\ref{tauq},
such that $\chi_j = 1$ for $r \le r_j$ and $\chi_j = 0$ for $r \ge r_{j+1}$.  Then we can rewrite
the $Q(s)$ from Lemma~\ref{tauq} as
$$
Q(s) = (2s-n) E_0(s)^t \chr_{[r_2,r_3]} [\Delta_0, \chi_2] R_P(s) [\Delta_0,\chi_1]\chr_{[r_1,r_2]}E_0(n-s),
$$
where $\chr_{[r_i,r_{i+1}]}$ denotes the characteristic function $\chi_{[r_i,r_{i+1}]}(r)$, acting as
a multiplication operator.
By Lemma \ref{tauq} and the cyclicity of the trace we have
$$
\log |\tau(s)|  = 
\det \Bigl(1 + (2s-n) [\Delta_0, \chi_2] R_P(s) [\Delta_0,\chi_1]\chr_{[r_1,r_2]}E_0(n-s) E_0(s)^t \chr_{[r_2,r_3]}\Bigr). 
$$
For $\re s \ge \nh$, under the assumption dist$(s(1-s), \sigma(P)) \ge \vep$,
we can apply the spectral theorem and standard elliptic estimates to obtain
$$
\Bigl\Vert [\Delta_0, \chi_2] R_P(s) [\Delta_0,\chi_1] \Bigr\Vert \le C,
$$
where $C$ depends on $\vep$, $\eta$, and $r_0$.
Under these restrictions,
\begin{equation}\label{tau.sv}
\log |\tau(s)| \le \sum_{j=1}^\infty \log \Bigl(1 + C \mu_j(F(s)) \Bigr),
\end{equation}
where 
$$
F(s) := (2s-n) \>\chr_{[r_1,r_2]}E_0(n-s) E_0(s)^t \chr_{[r_2,r_3]}
$$

Using Proposition~\ref{E0.coeff}, the eigenfunctions of $F^*F(s)$ can then be written down explicitly. If
we define 
$$
u_{l,m}(r,\omega) :=  \chr_{[r_2,r_3]} \overline{a_l(s;r) Y_l^m(\omega)},
$$
then 
$$
F^*F(s) u_{l,m} = \lambda_l(s)^2 u_{l,m},
$$
where $\lambda_l(s)$ is given by (\ref{laml.def}).  To see that $\{\lambda_l(s)\}$,
counted with multiplicities, contains all
of the nonzero eigenvalues of $F^*F(s)$, suppose that $w \in L^2(\bbH^{n+1})$
and $\brak{u_{l,m}, w} = 0$ for all $l,m$.  Then by (\ref{E0.expand}) we have
$E_0(s)^t \chr_{[r_2,r_3]} w = 0$, which implies that $F^*F(s)w = 0$.  
Hence, after possible rearrangement, the sequences $\{\lambda_l(s)\}$ 
and $\{\mu_j(F(s))\}$ correspond.  
The claimed estimate follows from (\ref{tau.sv}). 
\end{proof}

\bigbreak
Lemma~\ref{laml.def} reduces our problem to the estimation of the $\lambda_l(s)$'s,
which we take up next.

\begin{lemma}\label{lambda.lemma}
Assume that $\re s > \nh$ and $|s - \nh| \in \bbN$, and
set $k = l + \tfrac{n-1}2$ and $k\alpha = s-\nh$.  Assuming that $r_3 \in (r_0,r_0+1)$
in the definition (\ref{laml.def}) of $\lambda_l(s)$, we have the bound
$$
\log \lambda_l(s) \le k H(\alpha, r_3) + C \log k,
$$
where $H(\alpha, r)$ was defined in (\ref{Hdef}),
with a constant $C$ that depends only on $r_0$.
\end{lemma}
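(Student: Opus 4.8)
The plan is to express $\lambda_l(s)$ in terms of a single Legendre function, substitute the uniform large-order-and-degree asymptotics developed in the Appendix, and finish with a Laplace-type estimate of the two radial integrals in~(\ref{laml.def}). First I would simplify $\lambda_l(s)$ using Proposition~\ref{E0.coeff}. The coefficients $a_l(s;r)$ and $a_l(n-s;r)$ are built from the Legendre functions $P_{s-(n+1)/2}^{-l-(n-1)/2}(\cosh r)$ and $P_{(n-s)-(n+1)/2}^{-l-(n-1)/2}(\cosh r)$, whose degrees $k\alpha-\tfrac12$ and $-k\alpha-\tfrac12$ are interchanged by $\nu\mapsto-\nu-1$; since $P_\nu^\mu=P_{-\nu-1}^\mu$, both coefficients are fixed multiples of $P_{k\alpha-1/2}^{-k}(\cosh r)$. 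Substituting, the $\sinh r$ powers combine to $(\sinh r)^{-(n-1)}(\sinh r)^n=\sinh r$, the powers of $2$ collapse, and Euler's reflection formula applied to the denominators, $\Gamma(s-\nh+1)\Gamma(\nh-s+1)=\pi k\alpha/\sin(\pi k\alpha)$, together with $|2s-n|=2k|\alpha|$, $l+s=k(1+\alpha)+\tfrac12$ and $l+n-s=k(1-\alpha)+\tfrac12$, collapses $\lambda_l(s)$ to the product of $\bigl|\Gamma(k(1+\alpha)+\tfrac12)\,\Gamma(k(1-\alpha)+\tfrac12)\,\sin(\pi k\alpha)\bigr|$ with the geometric mean of the integrals $\int_{r_i}^{r_{i+1}}\sinh r\,\bigl|P_{k\alpha-1/2}^{-k}(\cosh r)\bigr|^2\,dr$. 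Since $k\alpha=s-\nh$ stays fixed as $l$ varies, $\log|\sin(\pi k\alpha)|$ is bounded, and Stirling turns the two $\Gamma$-factors into $2k\log k$ plus $k$-linear terms.

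The hard part is the uniform control, as $k\to\infty$, of $P_{k\alpha-1/2}^{-k}(\cosh r)$, whose order $-k$ and degree $k\alpha-\tfrac12$ both grow linearly. Here I would invoke the asymptotic analysis of the Appendix, which treats precisely this regime: writing $P_{k\alpha-1/2}^{-k}(\cosh r)=\tanh^k(r/2)\,F\bigl(k\alpha+\tfrac12,-k\alpha+\tfrac12;k+1;-\sinh^2(r/2)\bigr)\big/\Gamma(k+1)$, the factor $1/\Gamma(k+1)$ contributes a $-k\log k+k$ to the logarithm that cancels half of the $2k\log k$ from the $\Gamma$ normalization of Proposition~\ref{E0.coeff}, while the remaining hypergeometric factor is handled by the saddle-point estimates in the Appendix. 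The net effect — valid uniformly for $r$ in the range $r_0<r<r_0+1$ that appears in the hypothesis, with a constant depending only on $r_0$ — is the bound $|a_l(s;r)|^2(\sinh r)^n\le e^{\,kH(\alpha,r)+C\log k}$ with $H$ as in~(\ref{Hdef}), and likewise with $s$ replaced by $n-s$ and $\alpha$ by $-\alpha$. Since $\sqrt{1+\alpha^2\sinh^2 r}$ and $\alpha^2-1$ depend only on $\alpha^2$, one checks directly that $k\bigl(H(-\alpha,r)-H(\alpha,r)\bigr)=\pm 2\pi k\,\im\alpha=\pm 2\pi\,\im s$ is $O(1)$, so the two versions of the bound differ only by a harmless additive constant, and either may be used below.

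What remains is routine. A short computation gives $\del_r H(\alpha,r)=\frac{2}{\sinh r}\,\re\sqrt{1+\alpha^2\sinh^2 r}$, which is strictly positive: $\re s>\nh$ forces $\re\alpha>0$, so $1+\alpha^2\sinh^2 r$ stays off the branch cut of the square root and $\re\sqrt{1+\alpha^2\sinh^2 r}>0$. Hence $r\mapsto H(\pm\alpha,r)$ is increasing on $(r_0,r_0+1)$, so going back to~(\ref{laml.def}) each integral $\int_{r_i}^{r_{i+1}}|a_l(\,\cdot\,;r)|^2(\sinh r)^n\,dr$ is at most $\eta$ times its integrand at $r_{i+1}$, i.e.\ $\le\eta\,e^{\,kH(\pm\alpha,r_{i+1})+C\log k}$. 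Taking the geometric mean of the two integrals in $\lambda_l(s)$, bounding $\log|2s-n|$ by a constant, using $H(\alpha,r_2)\le H(\alpha,r_3)$, and absorbing the $O(1)$ terms (together with the finitely many small $l$, should the Appendix estimate require $k$ large) into $C\log k$, I obtain $\log\lambda_l(s)\le k\,H(\alpha,r_3)+C\log k$, which is the assertion. The one genuinely delicate step is the Legendre estimate of the second paragraph; the reduction and the final Laplace step are bookkeeping.
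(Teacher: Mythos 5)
Your overall strategy matches the paper's: reduce both coefficients to the single Legendre function $P^{-k}_{-1/2+k\alpha}(\cosh r)$ via $P^\mu_\nu=P^\mu_{-\nu-1}$, invoke the uniform bound from the Appendix (Corollary~\ref{PQ.bounds}), use monotonicity in $r$ to evaluate the integrals at the right endpoint, and run the Gamma factors through Stirling. But the middle step as you state it is wrong. The claimed individual bound $|a_l(s;r)|^2(\sinh r)^n\le e^{kH(\alpha,r)+C\log k}$ with $C=C(r_0)$ does not hold: combining $|A_l(s)|^2$ from Proposition~\ref{E0.coeff} with Corollary~\ref{PQ.bounds} and Stirling gives an exponent $2k\re[\phi(\alpha,r)-p(\alpha)]+2k\re\bigl[(1+\alpha)\log(1+\alpha)-\alpha\log(2\alpha)\bigr]+O(\log k)$, which differs from $kH(\alpha,r)$ by the $k$-linear quantity $k\re\bigl[(1+\alpha)\log(1+\alpha)-2\alpha\log(2\alpha)+(\alpha-1)\log(\alpha-1)\bigr]$; at $\alpha=0.1$ this is about $+0.52\,k$, so the individual bound fails by an exponentially large factor. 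The function $H$ is produced only by the \emph{combination} $\bigl|\sin(\pi k\alpha)\,\Gamma(l+s)\Gamma(l+n-s)\bigr|/\Gamma(k+1)^2$ together with both Legendre factors: the $s$-dependent pieces (the $\pm2k\re\alpha\log k$ from the Gammas, the $\pi k|\im\alpha|$ from the sine, etc.) cancel in the product, not separately.

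Your attempted repair --- that $k\bigl(H(-\alpha,r)-H(\alpha,r)\bigr)=\pm2\pi\im(s-\nh)$ ``is $O(1)$'' --- also fails: in the application $s=\nh+ae^{i\theta}$, so $|\im s|$ is of order $a=k|\alpha|$, again a $k$-linear error, not an additive constant (and the lemma's constant must depend only on $r_0$, so even ``constant for fixed $s$'' discrepancies are fatal). Relatedly, you never substantively use the hypothesis $|s-\nh|\in\bbN$; it is needed exactly where $\Gamma(k(1-\alpha)+\tfrac12)$ approaches its poles ($\alpha$ near the real axis beyond $1$), where one writes $\sin(\pi k\alpha)\,\Gamma(k(1-\alpha)+\tfrac12)=-\pi\tan(\pi k\alpha)/\Gamma(k(\alpha-1)+\tfrac12)$ and uses $|\tan\pi k\alpha|\le1$. (A cosmetic point: the Appendix bound comes from a Liouville--Green/Airy uniform asymptotic, not from a hypergeometric saddle-point representation.) The fix is simply not to split: estimate $\lambda_l(s)$ as a whole, keeping the sine and both Gamma factors together; Stirling then yields $\pi k|\im\alpha|+k\re\bigl[(\alpha+1)\log(\alpha+1)+(1-\alpha)\log(1-\alpha)\bigr]+O(\log k)$, which added to the $2k\re[\phi(\alpha,r_3)-p(\alpha)]$ from the two integrals gives exactly $kH(\alpha,r_3)$.
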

\begin{proof}
From Proposition~\ref{E0.coeff} we obtain the explicit formula
\[
\begin{split}
\lambda_l(s) &= \Bigl| \sin \pi (s-\nh) \> \Gamma(l + s) \Gamma(l+n-s) \Bigr| 
\left[ \int_{r_1}^{r_2} \Bigl|P_{s-\frac{n+1}2}^{-l - \frac{n-1}2}(\cosh r)\Bigr|^2\>\sinh r\>dr \right]^{\frac12} \\
&\qquad \times\left[ \int_{r_2}^{r_3} \Bigl|P_{s-\frac{n+1}2}^{-l - \frac{n-1}2}(\cosh r)\Bigr|^2\>\sinh r\>dr\right]^{\frac12},
\end{split}
\]
where we have exploited the symmetry $P^{-k}_\nu(z) = P^{-k}_{-1-\nu}(z)$.

By conjugation, if necessary, we can assume that $\arg \alpha \in [0,\frac{\pi}2]$.
Applying the estimate from Corollary~\ref{PQ.bounds} then yields
\begin{equation}\label{lam.bound1}
\begin{split}
\lambda_l(s) & \le Ck^{\frac13} \left| \frac{\sin (\pi k\alpha) \> \Gamma(k(1+\alpha)+\tfrac12) \>
\Gamma(k(1-\alpha)+\tfrac12)}{\Gamma(k+1)^2} \right| \\
&\qquad 
\times\left[ \int_{r_1}^{r_2} e^{2k \re (\phi(\alpha, r)-p(\alpha))}\>\sinh r\>dr\right]^{\frac12} 
\left[ \int_{r_2}^{r_3} e^{2k \re (\phi(\alpha, r)-p(\alpha))}\>\sinh r\>dr\right]^{\frac12}.
\end{split}
\end{equation}
By (\ref{dzph}), $\re \phi(\alpha, r)$ is increasing as a 
function of $r$.
Hence
$$
\left[ \int_{r_j}^{r_{j+1}} e^{2k \re (\phi(\alpha, r)-p(\alpha))}\>\sinh r\>dr\right]^{\frac12} 
\le  e^{k \re (\phi(\alpha, r_3)-p(\alpha))} \cosh r_3,\quad j=1,2.
$$

The first factor on the right side of (\ref{lam.bound1}) can be estimated directly via
Stirling's formula for $\alpha \notin [1,\infty)$,
\begin{equation}\label{gamma.bd1}
\begin{split}
& \log \left| \frac{\sin \pi k\alpha \> \Gamma(k(1+\alpha)+\tfrac12) \>
\Gamma(k(1-\alpha)+\tfrac12)}{\Gamma(k+1)^2} \right|  \\
&\qquad = \pi k |\im \alpha| + k \re \Bigl[ (\alpha+1) \log (\alpha+1) + (1-\alpha) \log (1-\alpha) \Bigr]
+ O(\log k),
\end{split}
\end{equation}
as $k\to \infty$, uniformly for $\arg (\alpha-1) > \delta$.  
We can extend the same estimate to $\arg(\alpha-1) \le \delta$, using
$$
\sin \pi k\alpha\> \Gamma(k(1-\alpha)+\tfrac12) = \frac{-\pi \tan\pi k\alpha}{\Gamma(k(\alpha-1) + \tfrac12)},
$$
and our assumption that $|k\alpha| \in \bbN$, which implies
$$
|\tan \pi k\alpha| \le 1.
$$

After we note that
$$
H(\alpha, r) = \re \Bigl[ 2\phi(\alpha, r) - 2p(\alpha) + (\alpha+1) \log (\alpha+1) - (\alpha-1) \log (\alpha-1) \Bigr],
$$
we obtain from (\ref{lam.bound1}) and (\ref{gamma.bd1}) the estimate
$$
\log \lambda_l(s) \le k H(\alpha, r_3) + O(\log k) + 2 \log \cosh r_3.
$$
\end{proof}

\bigbreak
Now we can combine Lemmas~\ref{tau.sum.lemma} and \ref{lambda.lemma} to estimate $\tau(s)$.
The strategy here is similar to Stefanov's in  \cite[Thm.~5a]{Stefanov:2006}.
\begin{proposition}\label{logtau.prop}
For $a - \nh \in \bbN$ and  $|\theta| \le \tfrac{\pi}2$ we have
$$
\log |\tau(\nh+ae^{i\theta})| \le b(\theta, r_0) a^{n+1} + o(a^{n+1}),
$$
uniformly for $|\theta| \le \tfrac{\pi}2 - \vep a^{-2}$, with
\begin{equation}\label{bn.def}
b(\theta, r_0) :=  \frac{2}{\Gamma(n)} \int_{0}^\infty  \frac{[H(xe^{i\theta}, r_0)]_+}{x^{n+2}}\>dx,
\end{equation}
where $[\cdot]_+$ denotes the positive part.
\end{proposition}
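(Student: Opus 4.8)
The plan is to combine the singular-value bound of Lemma~\ref{tau.sum.lemma} with the Legendre estimate of Lemma~\ref{lambda.lemma} and then perform a Riemann-sum analysis in the spirit of \cite[Thm.~5a]{Stefanov:2006}. Fix $\eta\in(0,\tfrac13)$ so that $r_3:=r_0+3\eta\in(r_0,r_0+1)$, and put $s=\nh+ae^{i\theta}$. First I would check that Lemma~\ref{tau.sum.lemma} applies: since $s(n-s)=\tfrac{n^2}4-a^2e^{2i\theta}$ we have $|\im(s(n-s))|=a^2|\sin2\theta|$, which is a bounded multiple of $a^2$ when $|\theta|$ is bounded away from $0$ and $\tfrac\pi2$, and is bounded below by a fixed multiple of $\vep$ once $|\theta|\le\tfrac\pi2-\vep a^{-2}$ approaches $\tfrac\pi2$; while for $|\theta|$ near $0$ the quantity $\re(s(n-s))=\tfrac{n^2}4-a^2\cos2\theta$ is large and negative, hence far from $\sigma(P)\subset[-M,\infty)$ by assumption (i). So for $a$ large, $\operatorname{dist}(s(n-s),\sigma(P))$ exceeds a fixed positive constant uniformly in $\theta$, and Lemma~\ref{tau.sum.lemma} gives $\log|\tau(s)|\le\sum_l h_n(l)\log(1+C\lambda_l(s))$.

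Next I would apply Lemma~\ref{lambda.lemma}. Writing $k=l+\tfrac{n-1}2$ and $\alpha_l=ae^{i\theta}/k$, it gives $\lambda_l(s)\le Ck^Ce^{kH(\alpha_l,r_3)}$, so the elementary bound $\log(1+x)\le[\log x]_++\log2$ yields $\log(1+C\lambda_l(s))\le k\,[H(\alpha_l,r_3)]_++C\log(k+2)$. The qualitative input about $H$ is that $H(\alpha,r_3)\to\log\frac{\cosh r_3-1}{\cosh r_3+1}<0$ as $\alpha\to0$ (so there is $\delta_0>0$, uniform in $\theta$, with $H(\alpha,r_3)\le-c_0<0$ for $|\alpha|\le\delta_0$), while $H(xe^{i\theta},r_3)\sim2r_3x\cos\theta$ as $x\to\infty$. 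Splitting the $l$-sum at $|\alpha_l|=\delta_0$: for $|\alpha_l|>\delta_0$, i.e., $l\le Ca$, the terms $C\log(k+2)$ sum to $O(a^n\log a)=o(a^{n+1})$; for $|\alpha_l|\le\delta_0$ one has $[H(\alpha_l,r_3)]_+=0$ and $\lambda_l(s)\le Ck^Ce^{-c_0k}$, so that tail sums to $O(1)$. This leaves
\[
\log|\tau(s)|\le\sum_{l=0}^\infty h_n(l)\,k\,[H(\alpha_l,r_3)]_++o(a^{n+1}),
\]
with error uniform in $\theta$.

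It remains to evaluate this sum. I would replace $h_n(l)\,k$ by its leading term $\tfrac{2}{(n-1)!}k^n$; since $k\,[H(\alpha_l,r_3)]_+\le Ca$ — which follows from $\sup_{x>0}[H(xe^{i\theta},r_3)]_+/x<\infty$ — the remainder is $O(k^{n-2})\cdot k[H(\alpha_l,r_3)]_+=O(a\,k^{n-2})$ and sums to $O(a^n)$. Under the substitution $x=a/k$, the function $k\mapsto k^n[H(ae^{i\theta}/k,r_3)]_+$ becomes $a^n$ times the fixed, $a$-independent, piecewise-monotone function $x\mapsto x^{-n}[H(xe^{i\theta},r_3)]_+$ (monotone reparametrizations preserve total variation), so its sum over $k\in\tfrac{n-1}2+\bbN_0$ differs from its integral by at most its total variation, which is $O(a^n)=o(a^{n+1})$. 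Substituting $k=a/x$ in that integral gives $a^{n+1}\int_0^{2a/(n-1)}x^{-n-2}[H(xe^{i\theta},r_3)]_+\,dx$, and pushing the upper limit to $\infty$ costs $O(a)=o(a^{n+1})$ since the integrand decays like $x^{-n-1}$. Using $\Gamma(n)=(n-1)!$ this is $b(\theta,r_3)a^{n+1}+o(a^{n+1})$. Finally, because $H(\alpha,r)$ is increasing in $r$, letting $\eta\downarrow0$ decreases $b(\theta,r_0+3\eta)$ to $b(\theta,r_0)$, uniformly in $\theta$ by monotone convergence, which gives the stated bound.

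The step I expect to be the main obstacle is the uniformity in $\theta$ as $\theta\to\tfrac\pi2$. There $\alpha$ becomes purely imaginary, the square roots and logarithms defining $H$ in (\ref{Hdef}) approach their branch cuts, the coefficient $2r\cos\theta$ governing the decay of the integrand at $x=\infty$ degenerates, and one must nonetheless bound the threshold $\delta_0$, the supremum $\sup_{x>0}[H(xe^{i\theta},r_3)]_+/x$, the number of monotone pieces, and the tail $\int^\infty x^{-n-2}[H(xe^{i\theta},r_3)]_+\,dx$ uniformly. The cutoff $|\theta|\le\tfrac\pi2-\vep a^{-2}$ — precisely the condition keeping $s(n-s)$ a fixed distance from $\sigma(P)$ — together with the Legendre asymptotics of the Appendix that underlie $H$ should be what makes this possible.
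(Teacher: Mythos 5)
Your proposal is correct and follows the paper's strategy: verify the spectral-distance hypothesis so that Lemma~\ref{tau.sum.lemma} applies, feed in Lemma~\ref{lambda.lemma}, isolate the part of the sum where $H>0$, compare it to the integral defining $b(\theta,\cdot)$, and finally send the cutoff width $\eta$ to zero using monotonicity of $H$ in $r$ (noting, as you implicitly do, that the error constants blow up as $\eta\to0$, so the conclusion must pass through a $\limsup$). The one genuine difference is organizational and in your favor: by writing $\log(1+C\lambda_l)\le k[H(\alpha_l,r_3)]_++C\log(k+2)$ you split the sum into only two pieces at a $\theta$-independent threshold $|\alpha|=\delta_0$ where $H\le -c_0$, whereas the paper splits at $|\alpha|=A(\theta)$ and $(1-\delta)A(\theta)$, must separately estimate a middle term $\Sigma_0\le C\delta a^{n+1}$, and then absorbs that term by enlarging $r_3$ to $r_4=r_0+4\eta$ before taking $\eta\to0$; your use of the positive part eliminates the parameter $\delta$ and that absorption step entirely. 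Your sum-to-integral comparison via total variation replaces the paper's appeal to monotonicity of $H(xe^{i\theta},r)$ in $x$, and both rest on the same bound $[H(xe^{i\theta},r)]_+\le Cx$. The uniformity in $\theta$ near $\pm\tfrac\pi2$ that you flag as the main obstacle is exactly where the paper leans on the turning-point analysis of the Appendix (Corollary~\ref{PQ.bounds} is uniform for $\re\alpha\ge0$), so no new idea is needed there beyond what you already cite.
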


\begin{proof}
Since
$$
s(n-s) = \frac{n^2}4 - a^2 e^{2i\theta},
$$
the assumption that $|\theta| \le \tfrac{\pi}2 - \vep a^{-2}$ implies that $(s(n-s)$
remains a distance $O(\vep)$ from $\sigma(P)$ for $a$ sufficiently large.
The hypothesis of Lemma~\ref{tau.sum.lemma} is thus satisfied, yielding 
the estimate (\ref{tau.sum}) with a $C$ that depends only
on $\vep$, $\eta$, and $r_0$.
To apply Lemma~\ref{lambda.lemma} to estimate the right-hand side of (\ref{tau.sum}), 
we need to distinguish the terms according to the sign of $H(\alpha, r_3)$.  
For large $a$ the sum is dominated by terms with $H(\alpha, r_3)>0$, which 
occurs for $\alpha$ outside a certain neighborhood of the origin, as shown in
Figure~\ref{hplot}.
\begin{figure} 
\psfrag{rhp}{$H(\alpha, r) > 0$}
\psfrag{rh0}{$H(\alpha, r) = 0$}
\psfrag{1}{$1$}
\psfrag{At}{$A(\theta)$}
\begin{center}  
\includegraphics{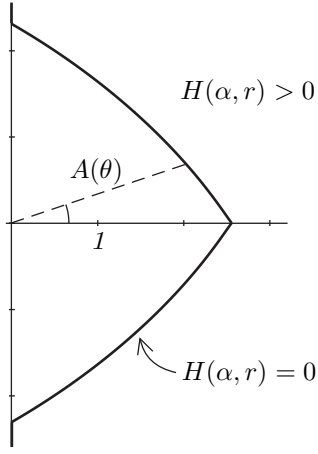} 
\end{center}
\caption{The positive region for $H(\alpha,r)$, shown for $r=1$.}\label{hplot}
\end{figure}

Let $x = A(\theta)$ be the implicit solution of the equation $H(xe^{i\theta}, r_3) = 0$,
so that $H(xe^{i\theta}, r_3) > 0$ precisely when $x > A(\theta)$.  
Given some $\delta>0$, we will subdivide the sum (\ref{tau.sum}) by breaking
at values where $|\alpha| = A(\theta)$ and $(1- \delta)A(\theta)$, leaving us with three parts.
The dominant part of the sum will be
$$
\Sigma_+ :=  \sum_{l:\> |\alpha| \ge A(\theta)} h_n(l) \log \Bigl(1 + C \lambda_l(s) \Bigr).
$$
(Recall that $\alpha = (s-\nh)/k$ where $k = l + (n-1)/2$.)
For $\alpha$ in this range, assuming $|\arg\alpha| \le \tfrac{\pi}2$,
we apply Lemma~\ref{lambda.lemma} to obtain
\begin{equation}\label{log1lam}
\log(1+C\lambda_l(s)) \le kH(\alpha, r_3) + C \log k
\end{equation}
Using this estimate together with the asymptotic
$$
h_n(l) = \frac{2l^{n-1}}{\Gamma(n)}(1 + O(l^{-1})),
$$
we have
\begin{equation}\label{sigp.est}
\Sigma_+ \le \sum_{k \le a/A(\theta)} \left(\frac{2k^{n-1}}{\Gamma(n)} +Ck^{n-2}\right) 
\biggl( kH\Bigl(\frac{ae^{i\theta}}{k}, r_3\Bigr) + C\log k \biggr).
\end{equation}
(The sum could be restricted to $k \ge \tfrac{n-1}2$, but this would not improve the bound.)
We can estimate $H(\alpha, r_3) = O(|\alpha|)$
with a constant  that depends only on $r_3$.   Thus
$$
 \sum_{k \le a/A(\theta)} k^{n-1} H\Bigl(\frac{ae^{i\theta}}{k}, r_3\Bigr) = O(a^n).
$$
With this estimate, the sums over lower order terms in (\ref{sigp.est}) are easily
controlled, and we obtain
\begin{equation}\label{sigplus.sum}
\Sigma_+ \le \frac{2}{\Gamma(n)} \sum_{k \le a/A(\theta)}  k^{n}
H\Bigl(\frac{ae^{i\theta}}{k}, r_3\Bigr)  + C a^n \log a,
\end{equation}
where $C$ depends only on $\vep$, $\eta$, and $r_0$.

Because $H(xe^{i\theta}, r)$ is an increasing function of $x$, 
the right-hand side of (\ref{sigplus.sum}) is easily estimated by the corresponding integral,
$$
\Sigma_+ \le  \frac{2}{\Gamma(n)} \int_{0}^{\frac{a}{A(\theta)}} k^n  H\Bigl(\frac{ae^{i\theta}}{k}, r_3\Bigr)\>dk
+ Ca^n \log a.
$$
Making the substitution $x = a/k$ gives
\[
\begin{split}
\Sigma_+ & \le \frac{2a^{n+1}}{\Gamma(n)} \int_{0}^\infty  \frac{[H(xe^{i\theta}, r_3)]_+}{x^{n+2}}\>dx
+ C a^n \log a \\
& =  b(\theta, r_3)a^{n+1} + C a^n \log a,
\end{split}
\]
with $C$ depending only on $\vep$, $\eta$, and $r_0$.

The middle term in (\ref{tau.sum}) will be
$$
\Sigma_0 :=  \sum_{l:\> (1-\delta)A(\theta) \le |\alpha| \le A(\theta)} 
h_n(l) \log \Bigl(1 + C \lambda_l(s) \Bigr).
$$
The number of terms in this sum is $O(a\delta)$, and we can control them using (\ref{log1lam}),
noting also that $H(\alpha, r_3) = O(\delta)$ for $|\alpha|$ in the given range.
Using an integral estimate as we did for $\Sigma_+$, we thus obtain
$$
\Sigma_0 \le C\delta a^{n+1} + C a^n \log a,
$$
where $C$ depends only on $\vep$, $\eta$, and $r_0$.

The final portion of the sum is
$$
\Sigma_- := \sum_{l:\> |\alpha| \le (1-\delta)A(\theta)} 
h_n(l) \log \Bigl(1 + C \lambda_l(s) \Bigr).
$$
We use the fact that $H(\alpha, r_3) \le -C\delta$ in this range to estimate
$$
\log (1+ C \lambda_l(s) ) \le C\lambda_l(s) \le Ce^{-ck}.
$$
This implies
$$
\Sigma_- \le C_\delta e^{-ca},
$$
for some $c>0$, where $C_\delta$ depends on $\delta$ as well as $\vep$ and the $r_j$'s.

Adding the three parts $\Sigma_+, \Sigma_0, \Sigma_-$ of (\ref{tau.sum}) together now yields
\begin{equation}\label{r3.delta}
\log |\tau(\nh+ae^{i\theta})| \le b(\theta, r_3) a^{n+1} + C(\vep,\eta,r_0) \bigl[ \delta a^{n+1}
+ a^n \log a\bigr] + C(\vep,\eta,r_0,\delta) e^{-ca},
\end{equation}
where we have made the dependence of the constants explicit.
Since $H(\alpha, r)$ is a strictly increasing function of $r$, we can absorb the $\delta a^{n+1}$
term into the first term by assuming that $\delta$ is small relative to $\eta$ and replacing $r_3$ with 
$r_4 = r_0 + 4\eta$.   With this change, we obtain from (\ref{r3.delta}) the estimate
\begin{equation}\label{r0.r4}
\frac{\log |\tau(\nh+ae^{i\theta})|}{a^{n+1}}  \le b(\theta, r_0 + 4\eta)  + 
C(\vep,\eta,r_0,\delta)a^{-1}\log a.
\end{equation}
The constant $C(\vep,\eta,r_0,\delta)$ may well blow up as $\eta \to 0$.   The best we can do
here is to observe that (\ref{r0.r4}) implies
$$
\limsup_{a \to \infty}\left[\frac{\log |\tau(\nh+ae^{i\theta})|}{a^{n+1}}  - b(\theta, r_0)\right]
\le  b(\theta, r_0 + 4\eta) - b(\theta, r_0),
$$
for any $\eta > 0$.
Since $b(\theta, r)$ is uniformly continuous on $[-\tfrac{\pi}2, \tfrac{\pi}2]\times [r_0, r_0+1]$, 
we can now let $\eta \to 0$ to obtain the claimed $o(a^{n+1})$ estimate.
\end{proof}

\bigbreak
\begin{proof}[Proof of Theorem~\ref{inttau.prop}] 
For any $\vep>0$, we can integrate the result from
Proposition~\ref{logtau.prop} over $|\theta| \le \tfrac{\pi}2 - \vep a^{-2}$, which gives,
$$
\int_{|\theta| \le \tfrac{\pi}2 - \vep a^{-2}}  \log |\tau(\nh+ ae^{i\theta})|\>d\theta 
\le \Brad a^{n+1} + o(a^{n+1}).
$$
The factorization given by Proposition~\ref{detsrel.factor}, 
together with the minimum modulus theorem (see e.g.~\cite[Thm.~8.71]{Titchmarsh}), 
implies that for any $\delta>0$, there exists a sequence $r_i \to \infty$ such that
\begin{equation}\label{tau.order.est}
|\tau(\nh + r_i e^{i\theta})| \le C_\delta \exp(r_i^{n+1+\delta}),
\end{equation}
uniformly in $\theta$.   
In sectors of the form $|\theta| \in [\tfrac{\pi}2 - \beta,  \tfrac{\pi}2]$, where $\tau(\nh+ae^{i\theta})$ is analytic,
we can apply a Phragm\'en-Lindel\"of argument, using (\ref{tau.order.est}),
$\log |\tau(s)| = 0$ for $\re s = \nh$, and the estimate from Proposition~\ref{logtau.prop} 
for $|\theta| = \tfrac{\pi}2 - \beta$, to conclude that
$$
|\tau(\nh + a e^{i\theta})| \le Ca^{n+1},
$$
uniformly for $|\theta| \in [\tfrac{\pi}2 - \beta,  \tfrac{\pi}2]$.  Thus, 
$$
\int_{\frac{\pi}2 - \vep a^{-2} \le |\theta| \le \frac{\pi}2}  \log |\tau(\nh+ ae^{i\theta})|\>d\theta
\le C \vep a^{n-1}.
$$
\end{proof}

\bigbreak
\section{Examples}\label{ex.sec}

Suppose that $X= \bbH^{n+1}$ and we consider a black box perturbation $P = \Delta_g + V$, where
both the metric $g$ and potential $V$ are spherically symmetric.  The symmetry assumption 
guarantees that the perturbed Poisson kernel is ``diagonalized'' by spherical harmonics,
in the sense that
$$
E_P(s;r,\omega,\omega') = \sum_{l=0}^\infty \sum_{m=1}^{h_n(l)} 
a_{l}(s;r) Y_l^m(\omega) \overline{Y_l^m(\omega')},
$$
The coefficients $a_l(s;r)$ will satisfy (\ref{ml.def}) for $r > r_0$ and are thus expressible in terms
of Legendre functions.  Following the convention of Olver, we use the Legendre Q-function in
the form
\begin{equation}\label{bQ.def}
\bQ_\nu^\mu(z) := \frac{e^{-\mu\pi i}}{\Gamma(\nu+\mu+1)}\>  Q_\nu^\mu(z),
\end{equation}
where $Q_\nu^\mu(z)$ is the standard definition.  This makes $\bQ_\nu^\mu(z)$ an 
entire function of either $\mu$ or $\nu$, which is much more convenient for identifying resonances.
We can formulate the general solution of (\ref{ml.def}) for $r>r_0$ as
\begin{equation}\label{al.general}
a_l(s;r) = (\sinh r)^{-\frac{n-1}2} \biggl[ A_l(s)  \bQ_{\nu}^{k}(\cosh r) 
+ B_l(s) \bQ_{-\nu-1}^{k}(\cosh r) \biggr],
\end{equation}
where
\begin{equation}\label{k.nu}
k:= l + \frac{n-1}2, \qquad \nu := s - \frac{n+1}2.
\end{equation}
In particular examples, $A_l(s)$ and $B_l(s)$ will be determined by
matching $a_l$ and its first derivative to the corresponding solutions for $r < r_0$.
The scattering matrix elements can be read off from the asymptotics of these
solutions as $r\to \infty$, using
$$
a_l(s; r) \sim c_s \bigl(\rho^{n-s} +  \rho^s [S_P(s)]_l\bigr),
$$
in the same way that we found $[S_0(s)]_l$ in (\ref{S0.coeff}).  
Indeed, from the well-known asymptotic \cite[eq.~(12.09)]{Olver}
\begin{equation}\label{Q.asym}
\bQ_{\nu}^{k}(z) = \frac{\pi^\frac12}{\Gamma(\nu+\tfrac32)} \Bigl(\frac{z}2\Bigr)^{-\nu-1}
(1 + O(z^{-2})), \qquad \text{as }z\to\infty,
\end{equation}
we can see from (\ref{al.general}) that the scattering matrix elements are given by
\begin{equation}\label{SP.coeff}
[S_P(s)]_l = - 2^{n-2s} \frac{\Gamma(\nh-s)}{\Gamma(s-\nh)}
\frac{A_l(s)}{B_l(s)}.
\end{equation}

Consider the case where $P$ is the Laplacian for a spherical obstacle of radius $r_0$
in $\bbH^{n+1}$.  Imposing the Dirichlet boundary condition at $r=r_0$ gives 
coefficients
$$
A_s = \bQ_{-\nu-1}^{k}(\cosh r_0)  ,\qquad B_s = - \bQ_{\nu}^{k}(\cosh r_0).
$$
In this case, from (\ref{SP.coeff}) we see that
\begin{equation}\label{obst.S}
[S_P(s)]_l = 2^{n-2s} \frac{\Gamma(\nh-s)}{\Gamma(s-\nh)}
\frac{\bQ_{-\nu-1}^{k}(\cosh r_0)}{\bQ_{\nu}^{k}(\cosh r_0)}.
\end{equation}
With this observation we can give the:
\begin{proof}[Proof of Theorem~\ref{dir.thm}]
Our goal is to show that
\begin{equation}\label{logtau.asym}
\frac{1}{2\pi} \int_{-\frac\pi2}^{\frac\pi2} \log |\tau(\nh+ ae^{i\theta})|\>d\theta
\sim B_n(r_0) a^{n+1}.
\end{equation}
In conjunction with Proposition~\ref{relcount} and Corollary~\ref{scphase.cor}, (\ref{logtau.asym}) 
would imply that
$$
(n+1) \int_0^a \frac{N_P(t)}{t}\>dt \sim B_P a^{n+1},
$$ 
and this is equivalent to the stated asymptotic for $N_P(t)$.

Using (\ref{obst.S}) with (\ref{S0.coeff}) gives 
the relative scattering matrix elements,
$$
[S_P(s)S_0(s)^{-1}]_l =  \frac{\Gamma(l+n-s)}{\Gamma(l+s)}
\frac{\bQ_{-\nu-1}^{k}(\cosh r_0)}{\bQ_{\nu}^{k}(\cosh r_0)},
$$
with $k$ and $\nu$ defined as in (\ref{k.nu}).
With the connection formula \cite[eq.~(12.12)]{Olver},
$$
\frac{\bQ_{-\nu-1}^{k}(z)}{\Gamma(k+\nu+1)}  - \frac{\bQ_{\nu}^{k}(z)}{\Gamma(k-\nu)}
= \cos (\pi \nu) \>P_\nu^{-\mu}(z),
$$
we can rewrite the coefficient in the form 
\begin{equation}\label{dir.SS}
[S_P(s)S_0(s)^{-1}]_l =  1 - \cos (\pi \nu)\> \Gamma(k-\nu)
\frac{P_{\nu}^{-k}(\cosh r_0)}{\bQ_{\nu}^{k}(\cosh r_0)}.
\end{equation}

Now consider
$$
\log |\tau(s)| = \sum_{l=0}^\infty h_n(l) \log \Bigl|[S_P(s)S_0(s)^{-1}]_l\Bigr|.
$$
Defining $\alpha$ by $k\alpha = s - \nh$, we can use (\ref{dir.SS}) to write this as
\begin{equation}\label{lt.eta}
\log |\tau(s)| = \sum_{l=0}^\infty h_n(l) \log |1 - \eta_k(\alpha)|,
\end{equation}
where
$$
\eta_k(\alpha) := \sin (\pi k\alpha)\> \Gamma(k(1-\alpha)+\tfrac12)\>
\frac{P_{-\frac12+k\alpha}^{-k}(\cosh r_0)}{\bQ_{-\frac12+k\alpha}^{k}(\cosh r_0)}.
$$
Assuming that $|\arg \alpha| \le \tfrac{\pi}2 - \vep$, Corollary~\ref{PoverQ} gives
the estimate
$$
\log |\eta_k(\alpha)| \asymp  \left|\frac{\sin (\pi k\alpha)\> \Gamma(k(1-\alpha)+\tfrac12) \Gamma(k\alpha+1)}
{\Gamma(k+1)} \right|  e^{k \re[2\phi(\alpha, r_0) - p(\alpha) - q(\alpha)]},
$$
with constants depending only on $\vep$.
Applying Stirling's formula and avoiding the poles by assuming $|s - \nh| \in \bbN$ 
as in the proof of Lemma~\ref{lambda.lemma}, we have
$$
\frac{\sin (\pi k\alpha)\> \Gamma(k(1-\alpha)+\tfrac12) \Gamma(k\alpha+1)}
{\Gamma(k+1)} = k \re \Bigl[ \alpha \log \alpha - (\alpha-1) \log (\alpha-1) \Bigr] + O(\log \alpha)
$$
Since
$$
\re\Bigl[ 2\phi(\alpha, r_0) - p(\alpha) - q(\alpha) + \alpha \log \alpha - (\alpha-1) \log (\alpha-1)\Bigr] 
= H(\alpha, r_0),
$$
the full estimate is
\begin{equation}\label{etak.est}
\log |\eta_k(\alpha)| \asymp  kH(\alpha, r_0) + O(\log \alpha).
\end{equation}

As in the proof of Proposition~\ref{logtau.prop} we divide the sum (\ref{lt.eta}) into 
three pieces $\Sigma_+, \Sigma_0$, and $\Sigma_-$,
with breaks at $|\alpha| = (1\pm \delta)A(\theta)$ for some $\delta>0$.
The dominant piece is
$$
\Sigma_+ :=  \sum_{l:\> |\alpha| \ge (1+\delta)A(\theta)} h_n(l)
\log |1 - \eta_k(\alpha)|
$$
Using the lower bound from (\ref{etak.est}), but otherwise arguing as in
the proof of Proposition~\ref{logtau.prop}, we have
$$
\Sigma_+ \ge b(\theta, r_0)a^{n+1} + O(a^n \log a)
$$
The estimates on $\Sigma_0$ and $\Sigma_-$ are identical to those in
Proposition~\ref{logtau.prop}:
$$
\Sigma_0 \le c\delta a^{n+1} + O(a^n \log a),
$$
and 
$$
\Sigma_- = O(e^{-ca}).
$$
Hence we conclude that
$$
\log |\tau(\nh+ae^{i\theta})| \ge (b(\theta, r_0)-c\delta)a^{n+1} + O(a^n \log a),
$$
for $a \in \bbN$ and $|\theta| \le \tfrac{\pi}2 - \vep$, with constants that depend only
on $r_0$ and $\vep$.

Integrating, over $\theta$, and using Proposition~\ref{logtau.prop} to control the errors
from $|\theta| \in [\tfrac{\pi}2 - \vep, \tfrac{\pi}2]$, we obtain the estimate
$$
\frac{1}{2\pi} \int_{-\frac\pi2}^{\frac\pi2} \log |\tau(\nh+ ae^{i\theta})|\>d\theta
\ge (\Brad-\epsilon) a^{n+1} - C_{r_0,\epsilon} a^n \log a,
$$
valid for any $\epsilon >0$.  (This $\epsilon$ combines the terms proportional
to $\vep$ and $\delta$ from above.)
In combination with Theorem~\ref{inttau.prop}, this proves (\ref{logtau.asym}).
\end{proof}

\bigbreak
With some care, the explicit scattering matrix provided by (\ref{SP.coeff}) can
be used to compute resonances.  
Scattering poles and zeros are defined a renormalized scattering
matrix $\tS_P(s)$, in which the infinite rank poles and zeros coming from the gamma functions
are removed,
$$
\tS_P(s) := \frac{\Gamma(s-\nh)}{\Gamma(\nh-s)} S_P(s).
$$
The scattering multiplicity is then defined by 
$$
\nu_P(\zeta) := - \tr \bigl[\res_\zeta \tS_P'(s) \tS_P(s)^{-1}\bigr].
$$
For $\bbH^{n+1}$, the connection between scattering multiplicities and resonances is given by
\cite{GZ:1997, BP:2002, Gui:2005}
$$
\nu_P(\zeta) = m_P(\zeta) - m_P(n-\zeta) 
+ \begin{cases}0 & n\text{ odd}\\
\sum_{l\in \bbN} \Bigl( \mathbbm{1}_{n/2 - l}(\zeta) - \mathbbm{1}_{n/2+l}(\zeta) \Bigr) h_{n+1}(l)
& n \text{ even} \end{cases}
$$
For $\re \zeta < \nh$, the term $m_P(n-\zeta)$ plays a role only if $P$ has discrete spectrum.
In the examples that we will consider explicitly, $n=1$ and the discrete spectrum is empty.
For these cases, the resonances are precisely the poles of the $\tS_P(s)$.

Consider first the spherical obstacle of radius $r_0$ in $\bbH^2$, for which the scattering matrix
is given by (\ref{obst.S}).  From this expression we can read off the resonance set
$$
\calR_P = \bigcup_{k\in \bbZ} \bigl\{s: \bQ^{k}_{s-1}(\cosh r_0) = 0\bigr\}.
$$
Figures~\ref{dresplot} and \ref{Nobst} were thus obtained through numerical computation of zeroes
of the Legendre Q-function.

As a second example, we consider scattering in $\bbH^2$ by a radial step potential of the form
$$
V(r) = \begin{cases} c & r \le r_0, \\ 
0 & r > r_0. \end{cases}
$$
In this case,  with $P = \Delta_0 + V$, 
the coefficient solutions for $r \le r_0$ are Legendre $P$ functions $P^{-k}_{\omega(s)}(r_0)$,
with 
$$
\omega(s) := -\tfrac12 + \sqrt{(s-\tfrac12)^2+c}.
$$
The corresponding resonance set is
\begin{equation}\label{R.pot}
\calR_P = \bigcup_{k\in \bbZ} \Bigl\{s: \mathcal{W}\bigl[{\bQ^{k}_{s-1}}(z), 
P^{-k}_{\omega(s)}(z)\bigr]\Big|_{z = \cosh r_0} 
= 0\Bigr\}.
\end{equation}
where $\mathcal{W}$ is the Wronskian.   Resonance counting functions for $c=1$ and $c=5$, with $r_0 = 1$,
are shown in Figure~\ref{Npot}.  

We should note that Theorem~\ref{main.thm} does not apply to the step potential, because the lack 
of smoothness means that we cannot derive scattering phase asymptotics through Corollary~\ref{scphase.cor}. 
In view of the scattering phase asymptotics proved by Christiansen \cite{Christ:1998} 
in the black box Euclidean case, one might hope that the smoothness requirement in our case
could be loosened.  However, the technique of Robert used in \cite{Christ:1998} does not seem to be
applicable to the conformally compact hyperbolic case.
In any case, it is interesting to compare the putative upper bound suggested
by Theorem~\ref{main.thm} to the empirical results based on (\ref{R.pot}).  
For both of the cases shown in Figure~\ref{Npot}, the constant from the theorem would be
$B_P \approx 3.15$.   The numerical results thus suggest that $N_P(t)$ satisfies an asymptotic
with a constant significantly smaller than upper bound that Theorem~\ref{main.thm} would predict.
\begin{figure} 
\psfrag{10}{$10$}
\psfrag{100}{$100$}
\psfrag{200}{$200$}
\psfrag{c1}{$c = 1$}
\psfrag{c5}{$c = 5$}
\begin{center}  
\includegraphics{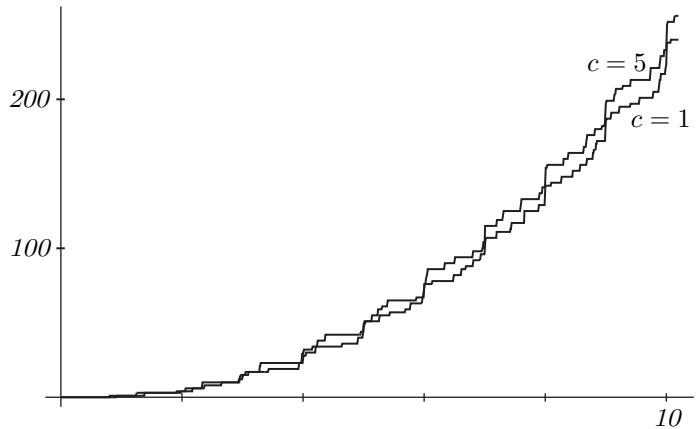} 
\end{center}
\caption{Resonance counting functions for radial step potentials in $\bbH^2$.}\label{Npot}
\end{figure}

Our final example is a ``transparent'' spherical obstacle.  Let $P = \Delta_g$ where 
$$
g = \begin{cases} \kappa^2 g_0 & r < r_0, \\
g_0 & r \ge r_0. \end{cases}.
$$
Then
$$
\calR_P = \bigcup_{k\in \bbZ} \Bigl\{s: \mathcal{W}\bigl[{\bQ^{k}_{s-1}}(r_0), P^{-k}_{\omega(s)}(r_0)\bigr] = 0\Bigr\},
$$
where 
$$
\omega(s) := -\tfrac12 + \sqrt{\kappa^2 s(s-1)+\tfrac14}
$$
Figure~\ref{Ntrans} shows resonance counting functions for $\kappa = \tfrac12$ and $\kappa = \sqrt{2}$.
Once again, Theorem~\ref{main.thm} does not apply because of the lack of smoothness.  However,
in this case the predicted constants, 
$$
B_P = \begin{cases} 2.75 & \kappa = \tfrac12, \\ 
3.70 & \kappa = \sqrt{2}, \end{cases}
$$
at least roughly match the observed behavior, so that one might believe that the theorem would give
a sharp result if extended to this case.
\begin{figure} 
\psfrag{10}{$10$}
\psfrag{100}{$100$}
\psfrag{200}{$200$}
\psfrag{300}{$300$}
\psfrag{kh}{$\kappa = \tfrac12$}
\psfrag{k2}{$\kappa = \sqrt{2}$}
\begin{center}  
\includegraphics{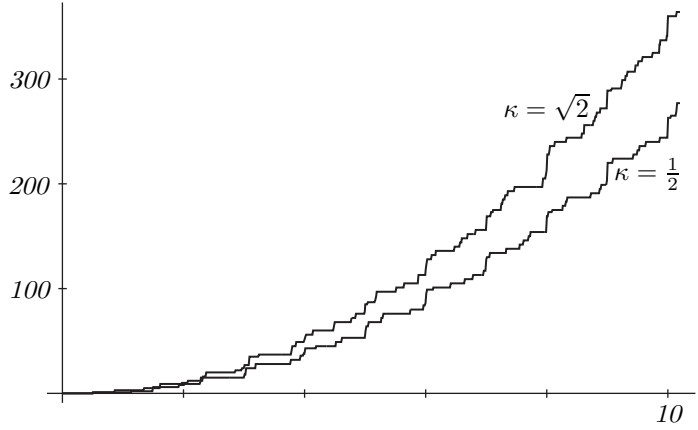} 
\end{center}
\caption{Resonance counting functions for transparent spherical obstacles 
in $\bbH^2$ with $r_0=1$.}\label{Ntrans}
\end{figure}
%

\appendix

\bigbreak
\section{Legendre function estimates}\label{legend.app}

In this section we will estimate the growth of the Legendre functions
$P_{\nu}^{k}(\cosh r)$ and $\bQ_{\nu}^{k}(\cosh r)$ as $k, |\nu| \to \infty$ simultaneously.
We wish to extract the leading asymptotic behavior, with error bounds
uniform in $\alpha:= (\nu+\tfrac12)/k$ for $\re \alpha \ge 0$.   
The construction of these estimates leans heavily on techniques from Olver \cite[\S11]{Olver}.  

Throughout this discussion we identify $z = \cosh r$ and switch freely between the two
variables.  Let
$$
w(z) = (\sinh r)\> 
\begin{cases}
P_{-\frac12+k\alpha}^{-k}(\cosh r), & \text{ or} \\
\bQ_{-\frac12+k\alpha}^{k}(\cosh r).\\
\end{cases}
$$
Then the Legendre equation reduces to 
\begin{equation}\label{wleg}
\del_z^2 w = (k^2 f + g)\>w,
\end{equation}
with
\begin{equation}\label{fz.gz}
f(r) := \frac{1+\alpha^2\sinh^2 r}{\sinh^4 r},\qquad 
g(r) := -\frac{\sinh^2 r+4}{4 \sinh^4 r}.
\end{equation}
If $\re \alpha = 0$ then the equation (\ref{wleg}) has turning points (points where $f$ vanishes to first order)
when $\alpha = \pm i/\sinh r$.  By conjugation, it suffices to assume $\im \alpha \ge 0$
and so we focus on the upper turning point.
To obtain uniform estimates near this point, we introduce the complex variable $\zeta$ defined by
integrating 
\begin{equation}\label{zeta.fdz}
\sqrt{\zeta}\>d\zeta = \sqrt{f}\>dz,
\end{equation}
starting from $\zeta = 0$ on the left and from $z_0 = \sqrt{1-1/\alpha^2}$ (the turning point)
on the right.  Throughout this section we assume principal branches for the logs and square roots,
under the restriction that $\arg \alpha \in [0,\pi/2]$.

Integrating both sides of (\ref{zeta.fdz}) yields
\begin{equation}\label{zeta.def}
\tfrac23 \zeta^{\frac32} = \phi,
\end{equation}
where
\begin{equation}\label{phi.def}
\begin{split}
\phi(\alpha, r) & :=  \int_{\cosh^{-1} z_0}^r \frac{\sqrt{1+\alpha^2\sinh^2 t}}{\sinh t}\>dt \\
& = 
\alpha \log \left( \frac{\alpha \cosh r +  \sqrt{1 + \alpha^2 \sinh^2 r}}{\sqrt{\alpha^2-1}} \right) 
+ \frac12 \log \left[ 
\frac{\cosh r - \sqrt{1 + \alpha^2 \sinh^2 r}}{\cosh r + \sqrt{1 + \alpha^2 \sinh^2 r}} \right].
\end{split}
\end{equation}
The expression (\ref{phi.def}) is well-defined by principal branches for $\arg \alpha \in (0,\pi/2]$,
and we extend the definition to the positive real axis by continuity.  
(At the apparent singularity at $\alpha = 1$, this extension yields $\phi(1,r)  = \log \sinh r$.)

The region of interest, namely $\arg \alpha \in [0,\pi/2]$ and $r \ge 0$, corresponds to the
sector $\arg \phi \in [-\pi, \tfrac\pi{2}]$, as illustrated in Figure~\ref{phiplot}.   Figure~\ref{zetaplot}
show the corresponding picture for $\zeta$, and illustrates in particular how passing
from $\phi$ to $\zeta$ resolves the singularity at the turning point.  
\begin{figure}
\psfrag{tz}{$\theta=0$}
\psfrag{tph}{$\theta = \tfrac{\pi}2$}
\psfrag{api}{$r\to \infty$}
\begin{center}  
\includegraphics{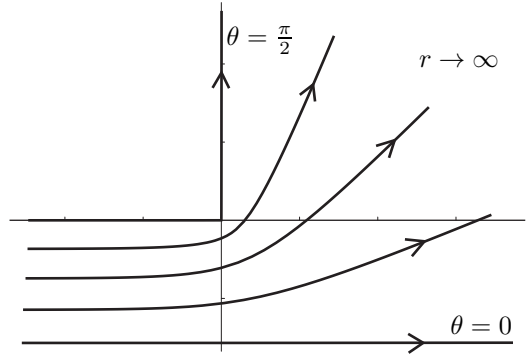} 
\end{center}
\caption{Trajectories of $\phi(\alpha,\cdot)$ with $|\alpha|=2$ and $\theta = \arg \alpha$.
The turning point occurs at the origin.}\label{phiplot}
\end{figure}
%
\begin{figure}
\psfrag{tz}{$\theta=0$}
\psfrag{tph}{$\theta = \tfrac{\pi}2$}
\psfrag{api}{$r\to \infty$}
\begin{center}  
\includegraphics{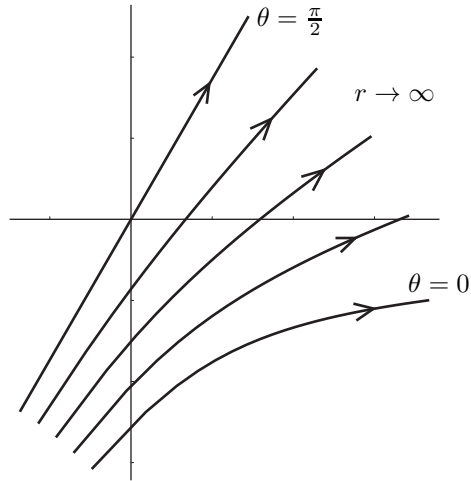} 
\end{center}
\caption{Trajectories of $\zeta(\alpha,\cdot)$ with $|\alpha|=2$ and $\theta = \arg \alpha$.}\label{zetaplot}
\end{figure}
For future reference, we note that $\phi$ satisfies the equation
\begin{equation}\label{dzph}
\del_r \phi = \sqrt{f}\>\sinh r,
\end{equation}
implying in particular that $\re \del_r \phi \ge 0$.  The fact that $\re \phi$ is an increasing function of $r$ will
be important later, and is not so evident from (\ref{phi.def}).

The asymptotics of $\phi(\alpha, \cdot)$ will also play a crucial role.  As $r\to 0$, we have
\begin{equation}\label{phi.asym1}
\phi(\alpha, r) = \log \left(\frac{r}2\right) + p(\alpha) + O(r^2),
\end{equation}
where 
\begin{equation}\label{pal.def}
p(\alpha) := \frac{\alpha}2 \log \left(\frac{\alpha+1}{\alpha-1}\right)
+ \frac12 \log (1-\alpha^2),
\end{equation}
And as $r \to \infty$, we have
\begin{equation}\label{phi.asym2}
\phi(\alpha, r) = \alpha r + q(\alpha) + O(r^{-2}),
\end{equation}
where 
\begin{equation}\label{qal.def}
q(\alpha) := \alpha \log \left(\frac{\alpha}{\sqrt{\alpha^2-1}}\right)
+ \frac12 \log \left(\frac{1-\alpha}{1+\alpha}\right),
\end{equation}

\begin{proposition}\label{legPQ.airy}
Assuming that $k>0$, $\arg \alpha \in [0,\tfrac{\pi}2]$ and $r \in [0, \infty)$, we have 
\begin{equation}\label{p.asym}
P_{-\frac12+k\alpha}^{-k}(\cosh r) =  \frac{2\pi^{\frac12}}{\Gamma(k+1)}
\frac{k^{\frac16} \zeta^{\frac14} e^{\frac{\pi i}6}}{\bigl[1+\alpha^2 \sinh^2 r\bigr]^{\frac14}} 
\>e^{-k p(\alpha)} \Bigl[ \Ai\bigl(k^{\frac23} e^{\frac{2\pi i}3} \zeta\bigr)
+ h_1(k, \alpha, r)\Bigr],
\end{equation}
and
\begin{equation}\label{q.asym}
\bQ_{-\frac12+k\alpha}^{k}(\cosh r) =  
\frac{2\pi}{\Gamma(k\alpha+1)} \frac{k^{\frac16} \zeta^{\frac14} 
(\frac{\alpha}2)^{\frac12}}{\bigl[1+\alpha^2 \sinh^2 r\bigr]^{\frac14}} 
\> e^{kq(\alpha)} \Bigl[ \Ai\bigl(k^{\frac23} \zeta\bigr) + h_0(k,\alpha, r)\Bigr],
\end{equation}
where $\zeta$ is defined by (\ref{zeta.def}) and (\ref{phi.def}), $p(\alpha)$ and $q(\alpha)$ 
are defined in (\ref{pal.def}) and (\ref{qal.def}), respectively.  The error terms satisfy
\begin{equation}\label{hestimates}
\begin{split}
|k^{\frac16} \zeta^{\frac14} h_1(k,\alpha,r)| & \le Ce^{k\re \phi} k^{-1} \Bigl(1+ |\alpha|^{-\frac23}\Bigr),\\
|k^{\frac16} \zeta^{\frac14} h_0(k,\alpha,r)| & \le Ce^{-k\re \phi} k^{-1} \Bigl(1+ |\alpha|^{-\frac23}\Bigr).
\end{split}
\end{equation}
with $C$ independent of both $\alpha$ and $r$.
\end{proposition}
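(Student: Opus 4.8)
The plan is to put the Legendre equation into Airy standard form via Olver's transformation at the simple turning point \cite[\S11]{Olver}, to read off from the Stokes geometry of Figures~\ref{phiplot}--\ref{zetaplot} which solution is which, and then to fix the scalar constants by matching against the known behaviour of $P$ and $\bQ$ at $r=0$ and $r=\infty$. Starting from the normal form (\ref{wleg}), $\del_z^2 w = (k^2 f + g)w$ with $f,g$ as in (\ref{fz.gz}) and $w$ equal to $\sinh r$ times the relevant Legendre function, I would change the independent variable from $z$ to $\zeta$ by $(d\zeta/dz)^2\,\zeta = f$ --- which is exactly (\ref{zeta.fdz})--(\ref{phi.def}) --- and set $W := (d\zeta/dz)^{1/2} w$. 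This produces the Airy-type equation $\del_\zeta^2 W = (k^2\zeta + \psi(\alpha,\zeta))\,W$, where $\psi$ is the usual Schwarzian correction and is analytic at the turning point $\zeta=0$, since $\zeta$ vanishes there to first order. Because $dz/d\zeta = (\zeta/f)^{1/2} = \zeta^{1/2}(\sinh^2 r)\,[1+\alpha^2\sinh^2 r]^{-1/2}$, we have $(dz/d\zeta)^{1/2}/\sinh r = \zeta^{1/4}[1+\alpha^2\sinh^2 r]^{-1/4}$, which is precisely the algebraic prefactor appearing in (\ref{p.asym}) and (\ref{q.asym}). Thus the proposition amounts to the assertion that $W$ is a specific scalar multiple of $\Ai(k^{2/3}e^{2\pi i/3}\zeta)$, respectively of $\Ai(k^{2/3}\zeta)$, up to relative error $O(k^{-1})$; the rotated argument in the first case versus the unrotated one in the second reflects that the relevant Stokes sectors for the two solutions differ.

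Next I would apply Olver's uniform Airy-type approximation to $\del_\zeta^2 W = (k^2\zeta+\psi)W$ in the complex $\zeta$-plane. The uniform (Airy rather than plain Liouville--Green) version is essential here because, as noted before (\ref{zeta.fdz}), when $\arg\alpha=\tfrac\pi2$ the turning point $z_0=\sqrt{1-1/\alpha^2}$ actually lies on the integration path $\{z=\cosh r:\ r\ge0\}$. For each admissible sheet $j$ Olver's theory supplies a solution, recessive in a prescribed subsector, of the form $W\sim\mathrm{const}\cdot\Ai(k^{2/3}e^{2\pi ij/3}\zeta)$ whose relative error is $O(k^{-1})$, controlled by the total variation of an error-control function built from $|\zeta|^{-1/2}|\psi|$ along the contour; after multiplication by the normalizing factor $k^{1/6}\zeta^{1/4}$ this becomes a bound of the form (exponential scale of the leading term)$\,\times k^{-1}\times(1+|\alpha|^{-2/3})$. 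The trajectories in Figures~\ref{phiplot}--\ref{zetaplot} show that $r\in[0,\infty)$ maps, for $\arg\alpha\in[0,\tfrac\pi2]$, into the $\phi$-sector $\arg\phi\in[-\pi,\tfrac\pi2]$; the solution recessive as $r\to0$, where $\re\phi\to-\infty$, is thereby matched to the sheet $j=1$, and the one recessive as $r\to\infty$, where $\re\phi\to+\infty$, to the sheet $j=0$. Since $P_{-1/2+k\alpha}^{-k}(\cosh r)$ is the Legendre solution recessive at $r=0$ and $\bQ_{-1/2+k\alpha}^{k}(\cosh r)$ the one recessive at $r=\infty$, each of the two is a scalar multiple of a single rotated Airy solution, and no full connection matrix is needed.

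The scalar constants are then pinned down at the recessive endpoints. As $r\to0$ one has $P_{-1/2+k\alpha}^{-k}(\cosh r)=(r/2)^{k}/\Gamma(k+1)+O(r^{k+2})$, while (\ref{phi.asym1}) gives $\tfrac23\zeta^{3/2}=\phi=\log(r/2)+p(\alpha)+O(r^2)$ and the recessive Airy asymptotic is $\Ai(k^{2/3}e^{2\pi i/3}\zeta)\sim\tfrac1{2\sqrt\pi}(k^{2/3}e^{2\pi i/3}\zeta)^{-1/4}e^{k\phi}$; equating the leading terms forces the constant $\tfrac{2\pi^{1/2}k^{1/6}e^{\pi i/6}}{\Gamma(k+1)}e^{-kp(\alpha)}$ of (\ref{p.asym}), with $p(\alpha)$ as in (\ref{pal.def}). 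Likewise, as $r\to\infty$ the asymptotic (\ref{Q.asym}) for $\bQ$, combined with (\ref{phi.asym2}), $\phi=\alpha r+q(\alpha)+O(r^{-2})$, and $\Ai(k^{2/3}\zeta)\sim\tfrac1{2\sqrt\pi}(k^{2/3}\zeta)^{-1/4}e^{-k\phi}$, forces the constant $\tfrac{2\pi k^{1/6}(\alpha/2)^{1/2}}{\Gamma(k\alpha+1)}e^{kq(\alpha)}$ of (\ref{q.asym}), with $q(\alpha)$ as in (\ref{qal.def}). The factors $e^{\pm k\re\phi}$ in (\ref{hestimates}) are simply $|e^{\mp k\phi}|$, the exponential size of the respective leading Airy terms, and the $k^{-1}$ is the gain from truncating Olver's expansion after a single term. (The apparent singularities of $p(\alpha)$, $q(\alpha)$ and $\zeta$ at $\alpha=1$ are removable and handled by the continuity convention fixed after (\ref{phi.def}).)

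The main obstacle is the uniformity of (\ref{hestimates}) in $\arg\alpha\in[0,\tfrac\pi2]$ and $r\in[0,\infty)$, together with its precise $|\alpha|$-dependence: one must bound the total variation of the error-control function uniformly along the image contour. This requires (a) that the $\zeta$-contours be progressive, so that a single exponential governs the bound --- for this the monotonicity $\re\del_r\phi\ge0$ from (\ref{dzph}) is the key input; (b) control of $\psi$ near $\zeta=0$, where it is analytic and bounded, with care only for the branch of $\zeta^{1/4}$; (c) control as $r\to0$, the regular singular point of the Legendre equation, where $f$ and $g$ blow up like $r^{-4}$ by (\ref{fz.gz}) but $|\zeta|\to\infty$ fast enough by (\ref{phi.asym1}) that the contribution to the error-control function still converges; (d) control as $r\to\infty$, where $f$ and $g$ decay exponentially in $r$; and (e) control as $\alpha\to0$, where the turning point $z_0=\sqrt{1-1/\alpha^2}$ recedes to infinity and the error-control integral picks up exactly the factor $|\alpha|^{-2/3}$ recorded in (\ref{hestimates}), while as $|\alpha|\to\infty$ one has $z_0\to1$ and the bound stays finite. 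The explicit asymptotics (\ref{phi.asym1})--(\ref{phi.asym2}) of $\phi$, the formulas (\ref{fz.gz}) for $f$ and $g$, and the monotonicity (\ref{dzph}) are the tools that make these uniform estimates go through.
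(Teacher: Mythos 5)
Your proposal follows essentially the same route as the paper: the Liouville transformation $W=(f/\zeta)^{1/4}w$ to the perturbed Airy equation, selection of the rotated Airy solution by recessiveness at $r=0$ (for $P$) and at $r=\infty$ (for $\bQ$), normalization of the constants by matching against the known endpoint asymptotics of the Legendre functions and of $\Ai$, and error control via Olver's error-control function with the monotonicity of $\re\phi$ and a careful uniform bound on $\int|\psi f^{1/2}\zeta^{-1/2}|\sinh r\,dr$ producing the $k^{-1}(1+|\alpha|^{-2/3})$ factor. The only part left as a sketch is the zone-by-zone estimation of that integral (near the turning point, near $r=0$, and as $\alpha\to0$), which is exactly where the paper spends most of its effort, but you have correctly identified all the relevant difficulties and the source of each term in the bound.
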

\begin{proof}
If we set $W = (f/\zeta)^{1/4} w$, then the equation (\ref{wleg}) transforms to:
\begin{equation}\label{airy.psi}
\del_\zeta^2 W = (k^2\zeta + \psi)W,
\end{equation}
a perturbed version of the Airy equation, with the extra term given by
\begin{equation}\label{psi.gfz}
\psi =  \frac{\zeta}{4f^2}\del_z^2 f - \frac{5\zeta }{16f^3} (\del_z f)^2 + \frac{\zeta g}{f}
+\frac{5}{16 \zeta^2}.
\end{equation}
Following Olver \cite[Thm.~11.9.1]{Olver}, we consider solutions of the form
\begin{equation}\label{w.airy}
W_\sigma = \Ai(k^{\frac23} e^{\frac{2\pi i \sigma}3} \zeta) + h_\sigma(k, \alpha, r),
\end{equation}
for $\sigma = -1,0,1$, where the error terms satisfy
\begin{equation}\label{hsig.eq}
\del_\zeta^2 h_\sigma - k^2 \zeta h_\sigma = \psi \>\Bigl[h_\sigma + \Ai(k^{\frac23}e^{\frac{2\pi i \sigma}3}\zeta)\Bigr],
\end{equation}

Let us focus first on the Legendre $P$-function.
As $|w| \to \infty$, the Airy function $\Ai(w)$ is exponentially decreasing for $|\arg w| < \tfrac{\pi}3$ 
and exponentially increasing for $|\arg w| \in (\tfrac{\pi}3, \pi]$.  
Since $P_{-\frac12+k\alpha}^{-k}(\cosh r)$ is recessive at zero, and $r \to 0$
corresponds to $\zeta \to e^{-\frac{2\pi i}3}\infty$,
we choose the solution $W_1$ from (\ref{w.airy}).
The assumption that the solution is recessive as $r \to 0$
implies boundary conditions,
\begin{equation}\label{h.bndry1}
h_1|_{r=0} =\del_r h_1|_{r=0} = 0,
\end{equation}
which we must impose on (\ref{hsig.eq}).
To identify the Legendre $P$ function with a multiple of $W_1$, we compare the well-known
asymptotic
$$
P_{-\frac12+k\alpha}^{-k}(\cosh r) = \frac{1}{\Gamma(k+1)} \Bigl(\frac{r}{2} \Bigr)^{k} (1 + O(r^2)),
$$
to the behavior of the ansatz
$$
(\sinh r)^{-1} (\zeta/f)^{\frac14} W_1 = \frac{\zeta^{\frac14}}{[1+\alpha^2 \sinh^2 r]^{\frac14}}
\Bigl(\Ai(k^{\frac23}e^{\frac{2\pi i}3} \zeta) + h_1(k, \alpha, r)\Bigr).
$$

Away from the negative real axis, the Airy function has the asymptotic behavior
\cite[eq.~(4.4.03)]{Olver}
\begin{equation}\label{ai.asym1}
\Ai(w) = \frac{1}{2\pi^{\frac12}} w^{-\frac14} \exp\bigl(-\tfrac23 w^{\frac32}\bigr)\bigl[1 + O(|w|^{-\frac32})\bigr],
\end{equation}
uniformly for $|\arg w| \le \pi-\delta$, with a constant that depends only on $\delta>0$.  
To cover the negative real axis we have also \cite[eq.~(4.4.05)]{Olver},
\begin{equation}\label{ai.asym2}
\Ai(w) = \frac{1}{\pi^{\frac12}} (-w)^{-\frac14} \cos \Bigl(\tfrac23 (-w)^{\frac32}-\tfrac{\pi}4\Bigr) 
\bigl[1 + O(|w|^{-\frac32})\bigr],
\end{equation}
uniformly for $|\arg w| \in [\tfrac{\pi}3 + \delta, \pi]$.  (These estimates agree where they
overlap.)

As $r\to 0$, we have $e^{\frac{2\pi i}3}\zeta \to + \infty$, which is in the range covered by (\ref{ai.asym1}).
Along with the asymptotic behavior of $\zeta$ deduced from (\ref{phi.asym1}), this yields
$$
\zeta^{\frac14}\Ai(k^{\frac23} e^{\frac{2\pi i}3} \zeta) \sim k^{-\frac16} \frac{e^{-\frac{\pi i}6}}{2\pi^{\frac12}} 
e^{kp(\alpha)} \left(\frac{r}2\right)^k \quad\text{as } r \to 0.
$$ 
Thus for the Legendre $P$-function we find
$$
P_{-\frac12+k\alpha}^{-k}(\cosh r) =  2\pi^{\frac12} k^{\frac16} e^{\frac{\pi i}6} \frac{e^{-kp(\alpha)}}{\Gamma(k+1)}
(\sinh r)^{-1} (\zeta/f)^{\frac14} W_1,
$$
which proves (\ref{p.asym}).

To complete the analysis of the $P$ case, it remains to control the size of the error term $h_1(k,\alpha,r)$.  
The error bounds may be derived as in the proof of \cite[Thm.~11.9.1]{Olver},
starting from the differential equation (\ref{hsig.eq}) satisfied by $h_1$.
Using the boundary condition (\ref{h.bndry1}), we can apply variation of parameters to transform
this to an integral equation, 
$$
h_1(k,\alpha, r) = -\frac{2\pi e^{\frac{i\pi}6}}{k^{\frac23}}
\int_0^r K_1(r,r') \psi(r') \>[h_1(k,\alpha,r') + \Ai(k^{\frac23}e^{\frac{2\pi i}3}\zeta(r'))]\>  
\frac{f(r')^\frac12 \sinh r'}{\zeta(r')^\frac12} \>dr',
$$
where
$$
K_1(r,r') := \Ai(k^\frac23e^{\frac{2\pi i}3} \zeta(r'))\Ai(k^\frac23 \zeta(r)) 
- \Ai(k^\frac23 \zeta(r'))\Ai(k^\frac23e^{\frac{2\pi i}3} \zeta(r)).
$$
Then, using the method of successive approximations as in \cite[Thm.~6.10.2]{Olver},  
together with the bounds on the Airy function and its derivatives developed in \cite[\S 11.8]{Olver}, 
we obtain the bound,
\begin{equation}\label{h.bound}
\Bigl| k^{\frac16}\>\zeta^{\frac14} h_1\Bigr| \le Ce^{k \re \phi} \Bigl(e^{ck^{-1} \Psi_1(r)} - 1 \Bigr),
\end{equation}
where 
\begin{equation}\label{Psi.def}
\Psi_1(r) := \int_0^r \left|\psi  f^{\frac12} \zeta^{-\frac12} \right|\> \sinh r'\>dr'.
\end{equation}
Using (\ref{fz.gz}) and (\ref{psi.gfz}), direct computation shows that
\begin{equation}\label{Psi.intg}
\psi f^{\frac12} \zeta^{-\frac12} =
\zeta^{\frac12} \left[ \frac{\alpha^4 \sinh^2 r - 4\alpha^2 \cosh^2 r + 1}{4(1+\alpha^2\sinh^2 r)^{\frac52}} \right]
+\frac{5}{16 } \frac{(1+\alpha^2 \sinh^2 r)^{\frac12}}{\zeta^\frac52\sinh^2 r}.
\end{equation}
Because we require estimates that are uniform in both $\alpha$ and $r$, the analysis of
(\ref{Psi.intg}) is somewhat complicated.  For some small $c>0$,
we will break the estimation into 3 different zones as described below. 
We use the notation $A\asymp B$ to mean that the ratio $A/B$ is bounded
above and below by positive constants that do not depend on $\alpha$ or $r$.

\emph{Zone 1:}
Assume that $|1+\alpha^2 \sinh^2 r| \ge c$ and $|\alpha| \ge 1$.
The first term in the formula (\ref{phi.def}) for $\phi$ dominates for large
$r$ and the second term for small $r$.  We can thus derive the bounds, 
$$
|\phi| \asymp \begin{cases} - \log |\alpha|r &\text{for } |\alpha| \sinh r \le \tfrac12 \\
|\alpha|r &\text{for } |\alpha| \sinh r \ge \tfrac12, \end{cases}
$$
Using this to estimate $\zeta = (\tfrac32 \phi)^{\frac23}$ in (\ref{Psi.intg}) gives
$$
\left|\psi  f^{\frac12} \zeta^{-\frac12} \right|\> \sinh r
\le \begin{cases}
C_1 |\alpha|^2 (-\log |\alpha| r)^{\frac13} r + C_2 (-\log |\alpha| r)^{-\frac53} r^{-1} & \text{for } |\alpha| \sinh r \le \tfrac12, \\
C_1 |\alpha|^{-\frac23} r^{\frac13} e^{-2r} + C_2 |\alpha|^{-\frac23} r^{-\frac53}& \text{for } |\alpha| \sinh r \ge \tfrac12.
\end{cases}
$$
It is then relatively straightforward to control the contribution of these terms to (\ref{Psi.def}).  
For $|\alpha| \ge 1$, we obtain
\begin{equation}\label{zone1}
\int_{|1+\alpha^2 \sinh^2 r| \ge c} \left|\psi  f^{\frac12} \zeta^{-\frac12} \right|\> \sinh r\>dr \le C,
\end{equation}
for some $C$ that depends on $c$ but not on $\alpha$.

\emph{Zone 2:}
Assume that $|1+\alpha^2 \sinh^2 r| \ge c$ and $|\alpha| \le 1$.
In this case we claim that
$$
|\phi| \asymp \begin{cases} |\log(1 - e^{-r})| &\text{for } |\alpha| \sinh r \le \tfrac12 \\
|\alpha|\bigl(r+ \log 2|\alpha|\bigr) &\text{for } |\alpha| \sinh r \ge \tfrac12, \end{cases}
$$
Using these in conjunction with (\ref{Psi.intg}) then gives
$$
\left|\psi  f^{\frac12} \zeta^{-\frac12} \right|\> \sinh r
\le \begin{cases}
C_1 |\log(1 - e^{-r})|^{\frac13} \sinh r + C_2 |\log(1 - e^{-r})|^{-\frac53} (\sinh r)^{-1} & \text{for } |\alpha| \sinh r \le \tfrac12, \\
C_1 |\alpha|^{-\frac23} (r+ \log 2|\alpha|)^{\frac13} e^{-2r} + C_2 |\alpha|^{-\frac23} 
\bigl(r+ \log 2|\alpha|\bigr)^{-\frac53}& \text{for } |\alpha| \sinh r \ge \tfrac12.
\end{cases}
$$
For $|\alpha| \le 1$, we obtain
\begin{equation}\label{zone2}
\int_{|1+\alpha^2 \sinh^2 r| \ge c} \left|\psi  f^{\frac12} \zeta^{-\frac12} \right|\> \sinh r\>dr \le C|\alpha|^{-\frac23},
\end{equation}
for some $C$ that depends on $c$ but not on $\alpha$.

\emph{Zone 3:}
Assume that $|1+\alpha^2 \sinh^2 r| \le c$.  This puts us near the turning point.
It is convenient to use the $z = \cosh r$ variable here.  The turning point occurs 
at the point
$$
z_0 := \sqrt{1-\alpha^{-2}},
$$
which lies near the path of integration for (\ref{Psi.def}) only when $\arg\alpha$ is close to $\tfrac{\pi}2$.  
Note that
$$
1+\alpha^2 \sinh^2 r = \alpha^2 (z^2-z_0^2),
$$
so that the assumption $|1+\alpha^2 \sinh^2 r| \le c$ translates to 
$$
|z-z_0| = \begin{cases}O(|\alpha|^{-1}) & \text{for }|\alpha| \le 1\\
O(|\alpha|^{-2}) & \text{for }|\alpha| \ge 1 \end{cases}
$$

To obtain estimates near the turning point, we introduce the functions
$$
p(z) := \left( \frac{f}{z-z_0}\right)^\frac12, \quad q(z) =  \frac{\phi}{(z-z_0)^\frac32}.
$$
By rewriting $p(z)$ in the form
$$
p(z) = \frac{\alpha\sqrt{z + z_0}}{z^2 -1},
$$
we can easily obtain estimates, 
\begin{equation}\label{pk.est}
|p^{(k)}(z)| \asymp 
\begin{cases} |\alpha|^{\frac52+k} & \text{for }|\alpha| \le 1\text{ and }|z-z_0| = O(|\alpha|^{-1}),\\
|\alpha|^{3+2k} & \text{for }|\alpha| \ge 1\text{ and }|z-z_0| = O(|\alpha|^{-2}). \end{cases}
\end{equation}
Using the definition of $\phi$ as $\int_{z_0}^z \sqrt{f}\>dz$, we can write $q(z)$ in the form
$$
q(z) = \int_{0}^1 t^\frac12 p(z_0 + t(z- z_0)) \>dt.
$$
Then from (\ref{pk.est}) we can derive estimates of the same form for $q(z)$,
\begin{equation}\label{qk.est}
|q^{(k)}(z)| \asymp 
\begin{cases} |\alpha|^{\frac52+k} & \text{for }|\alpha| \le 1\text{ and }|z-z_0| = O(|\alpha|^{-1}),\\
|\alpha|^{3+2k} & \text{for }|\alpha| \ge 1\text{ and }|z-z_0| = O(|\alpha|^{-2}). \end{cases}
\end{equation}

Using (\ref{pk.est}) and (\ref{qk.est}), with the fact that
$f/\zeta = p^2 (\tfrac32 q)^{-\frac23}$ and the formula for $\psi$ given in (\ref{psi.gfz}),
we obtain
$$
\left|\psi  f^{\frac12} \zeta^{-\frac12} \right| \le 
\begin{cases} O(|\alpha|^{\frac13}) & \text{for }|\alpha| \le 1\text{ and }|z-z_0| = O(|\alpha|^{-1}),\\
O(|\alpha|^{2}) & \text{for }|\alpha| \ge 1\text{ and }|z-z_0| = O(|\alpha|^{-2}). \end{cases}
$$
For $|\alpha| \le 1$, the result is 
\begin{equation}\label{zone3a}
\int_{|z-z_0| \le C|\alpha|^{-1}} \left|\psi  f^{\frac12} \zeta^{-\frac12} \right|\>dz = O(|\alpha|^{-\frac23}),
\end{equation}
For $|\alpha|\ge 1$ the corresponding estimate is
\begin{equation}\label{zone3b}
\int_{|z-z_0| \le C|\alpha|^{-2}} \left|\psi  f^{\frac12} \zeta^{-\frac12} \right|\>dz = O(1).
\end{equation}

Now we can combine the estimates of contributions to $\Psi(r)$ from all three zones, 
namely (\ref{zone1}), (\ref{zone2}), (\ref{zone3a}), and (\ref{zone3b}), to obtain
\begin{equation}\label{Psi.est}
|\Psi_1(r)| \le C(1 + |\alpha|^{-\frac23}),
\end{equation}
for $\arg \alpha \in [0, \tfrac{\pi}2]$ and $r \in [0,\infty)$, with $C$ independent of both $r$ and $\alpha$.
Applying the resulting estimate of $\Psi(r)$ in (\ref{h.bound}) then gives
$$
\Bigl| k^{\frac16}\>\zeta^{\frac14} h_1\Bigr| \le Ck^{-1} \Bigl(1+ |\alpha|^{-\frac23}\Bigr) e^{k \re \phi}.
$$
This completes the error analysis in the $P$ case.

We turn now to the Legendre $Q$-function and the proof of (\ref{q.asym}).  
We want the solution to be recessive at $r = \infty$,
so we set $\sigma = 0$ in the ansatz (\ref{w.airy}) and impose
the condition
\begin{equation}\label{h2.bc}
h_0 = O(r^{-2}), \quad\text{ as }r \to \infty.
\end{equation}
Using (\ref{phi.asym2}) and (\ref{ai.asym1}) we have 
$$
\frac{\zeta^{\frac14}}{[1 + \alpha^2 \sinh^2 r]^{\frac14}} \Ai(k^{\frac23}\zeta) \sim
\frac{k^{-\frac16}(\frac{\alpha}2)^{-\frac12}}{2\pi^{\frac12}} e^{-(k\alpha+\frac12)r},
$$
as $r \to \infty$.  
From the asymptotic (\ref{Q.asym}) we find
$$
\bQ_{-\frac12+k\alpha}^{k}(\cosh r) \sim \frac{\pi^{\frac12}}{\Gamma(k\alpha +1)} e^{-(k\alpha+\frac12)r}.
$$
Hence, for the $Q$-Legendre function we have
$$
\bQ_{-\frac12+k\alpha}^{k}(\cosh r) = \frac{2\pi k^{\frac16} (\frac{\alpha}2)^{\frac12}}{\Gamma(k\alpha +1)}
(\sinh r)^{-1} (\zeta/f)^{\frac14} W_0,
$$
which proves (\ref{q.asym}).

To control $h_0$ we use the boundary condition (\ref{h2.bc}) to transform the differential equation (\ref{hsig.eq})
for $h_0$ into an integral equation,
$$
h_0(k,\alpha, r) = \frac{2\pi e^{-\frac{i\pi}6}}{k^{\frac23}}
\int_r^\infty K_0(r,r') \psi(r') \>[h_0(k,\alpha,r') + \Ai(k^{\frac23}\zeta(r'))]\>  
\frac{f(r')^\frac12 \sinh r'}{\zeta(r')^\frac12} \>dr',
$$
where
$$
K_0(r,r') := \Ai(k^\frac23 \zeta(r'))\Ai(k^\frac23 e^{-\frac{2\pi i}3}\zeta(r)) 
- \Ai(k^\frac23 e^{-\frac{2\pi i}3}\zeta(r'))\Ai(k^\frac23 \zeta(r)).
$$

The consequence is that the analog of (\ref{h.bound}) for $h_2$ is
\begin{equation}\label{h2.bound}
\Bigl| k^{\frac16}\>\zeta^{\frac14} h_0\Bigr| \le Ce^{-k \re \phi} \Bigl(e^{ck^{-1} \Psi_0(r)} - 1 \Bigr),
\end{equation}
with 
$$
\Psi_0(r) := \int_r^\infty \left|\psi  f^{\frac12} \zeta^{-\frac12} \right|\> \sinh r'\>dr'.
$$
Since $\Psi_0(r) = \Psi_1(\infty) - \Psi_1(r)$, we can simply apply the estimate (\ref{Psi.est}) from the $P$ case to
complete the proof.
\end{proof}

\bigbreak
The first application we need from Proposition~\ref{legPQ.airy} is a set of good upper
bounds.  

\begin{corollary}\label{PQ.bounds}
Assuming that $|k\alpha| \ge 1$, $\re \alpha \ge 0$, and $r \in [r_0,r_1]$, we have the following estimates: 
\begin{equation}\label{p.upper}
\Bigl| P_{-\frac12+k\alpha}^{-k}(\cosh r) \Bigr| \le  \frac{Ck^{\frac16}}{\Gamma(k+1)} 
\> e^{k \re [\phi(\alpha,r) - p(\alpha)]}
\end{equation}
and
\begin{equation}\label{q.upper}
\Bigl| \bQ_{-\frac12+k\alpha}^{k}(\cosh r) \Bigr| \le  
\frac{Ck^{\frac16} |\alpha|^{\frac12}}{|\Gamma(k\alpha+1)|} \> e^{-k\re[\phi(\alpha, r) - q(\alpha)]},
\end{equation}
where $C$ depends only on $r_0$ and $r_1$.
\end{corollary}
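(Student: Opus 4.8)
The plan is to read both bounds off the Airy-function representations (\ref{p.asym}) and (\ref{q.asym}) of Proposition~\ref{legPQ.airy}, estimating each factor uniformly in $\alpha$ and in $r\in[r_0,r_1]$; by the conjugation symmetry used in the Appendix it is enough to treat $\arg\alpha\in[0,\tfrac\pi2]$. The first step is to control the prefactor $\zeta^{1/4}[1+\alpha^2\sinh^2 r]^{-1/4}$. Since $1+\alpha^2\sinh^2 r=f\sinh^4 r$ and $f/\zeta=p(z)^2(\tfrac32 q(z))^{-2/3}$ in the turning-point variables of the proof of Proposition~\ref{legPQ.airy}, with $p,q$ non-vanishing near the turning point $z_0=\sqrt{1-\alpha^{-2}}$, the ratio $\zeta/(1+\alpha^2\sinh^2 r)=\sinh^4 r\,p(z)^{-2}(\tfrac32 q(z))^{2/3}$ is bounded above and below by constants depending only on $r_0,r_1$; in particular the common zero of numerator and denominator at $z_0$ cancels, the prefactor is bounded, and $|1+\alpha^2\sinh^2 r|\asymp|\zeta|$ throughout.

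Next I would estimate the bracketed Airy term, splitting according to the size of $k^{2/3}|\zeta|$. For $P$ the Airy argument is $w=k^{2/3}e^{2\pi i/3}\zeta$ with $\arg w\in[0,\pi]$, for $\bQ$ it is $w=k^{2/3}\zeta$ with $\arg w\in[-\tfrac{2\pi}3,\tfrac\pi3]$, and with the branch conventions of Proposition~\ref{legPQ.airy} the magnitude of $\exp(-\tfrac23 w^{3/2})$ is $e^{k\re\phi}$ in the $P$ case and $e^{-k\re\phi}$ in the $\bQ$ case. When $k^{2/3}|\zeta|$ is large, the fixed choice $\delta=\tfrac\pi6$ makes (\ref{ai.asym1}) and (\ref{ai.asym2}) cover the whole range of $\arg w$ with a uniform constant, giving $|k^{1/6}\zeta^{1/4}\Ai(w)|\le Ce^{k\re\phi}$ for $P$ and $\le Ce^{-k\re\phi}$ for $\bQ$; here the oscillatory estimate (\ref{ai.asym2}) is needed only in a sector near the positive imaginary $\phi$-axis, where $\re\phi\ge0$, so the bounded cosine does no harm, and moreover $|1+\alpha^2\sinh^2 r|\ge c|\zeta|\ge ck^{-2/3}$ there, whence $|1+\alpha^2\sinh^2 r|^{-1/4}\le Ck^{1/6}$. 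When $k^{2/3}|\zeta|$ is bounded we are near the turning point: $\Ai(w)$ is $O(1)$, the prefactor is bounded, $e^{\pm k\re\phi}\asymp1$ since $k|\phi|=\tfrac23(k^{2/3}|\zeta|)^{3/2}=O(1)$, and the relative-error form of Olver's Airy remainder bound \cite[Thm.~11.9.1]{Olver} keeps $h_1,h_0$ of size $O(1)$ as well, so the whole bracket contributes at most $Ck^{1/6}$. In the large-$|\zeta|$ regime the remainders are handled by (\ref{hestimates}) together with the hypothesis $|k\alpha|\ge1$, which gives $|\alpha|^{-2/3}\le k^{2/3}$, hence $k^{-1}(1+|\alpha|^{-2/3})\le2$ for $k\ge1$, so $|k^{1/6}\zeta^{1/4}h_1|\le Ce^{k\re\phi}$ and $|k^{1/6}\zeta^{1/4}h_0|\le Ce^{-k\re\phi}$.

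Putting these into (\ref{p.asym}) gives $|P_{-\frac12+k\alpha}^{-k}(\cosh r)|\le\frac{Ck^{1/6}}{\Gamma(k+1)}|e^{-kp(\alpha)}|\,e^{k\re\phi}=\frac{Ck^{1/6}}{\Gamma(k+1)}e^{k\re[\phi-p]}$, and into (\ref{q.asym}) gives $|\bQ_{-\frac12+k\alpha}^{k}(\cosh r)|\le\frac{Ck^{1/6}|\alpha|^{1/2}}{|\Gamma(k\alpha+1)|}e^{-k\re[\phi-q]}$, the factor $|\alpha|^{1/2}$ coming straight from the $(\alpha/2)^{1/2}$ in (\ref{q.asym}). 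The main obstacle I anticipate is the turning-point zone: making the prefactor estimate and the Airy bound hold \emph{simultaneously} uniformly for $r$ near $z_0$ and for $|\alpha|$ in the range that forces a turning point into $[r_0,r_1]$, and ensuring the remainders $h_\sigma$ do not blow up like $|\zeta|^{-1/4}$ there — this is exactly where the factor $k^{1/6}$ in the statement (coming from $|\Ai(w)|=O(1)$ against the $k^{1/6}$ prefactor, rather than decay) is unavoidable, and where the relative-error form of Olver's estimate is essential. A secondary bookkeeping task is tracking the branch of $w^{3/2}$ consistently with Proposition~\ref{legPQ.airy}.
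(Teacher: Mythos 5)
Your argument is correct and follows essentially the same route as the paper: read both bounds off Proposition~\ref{legPQ.airy}, split according to distance from the turning point, use (\ref{ai.asym1})--(\ref{ai.asym2}) and (\ref{hestimates}) away from it, and absorb the turning-point zone (where $\Ai$ is merely $O(1)$) into the factor $k^{1/6}$. One small overstatement: the two-sided comparison $|\zeta|\asymp|1+\alpha^2\sinh^2 r|$ fails for large $|\alpha|$ (there $|\zeta|\asymp|\alpha|^{2/3}$ while $|1+\alpha^2\sinh^2 r|\asymp|\alpha|^2$), but only the one-sided bound $|\zeta|\le C|1+\alpha^2\sinh^2 r|$ is actually used, and that one holds.
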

\begin{proof}
By conjugation, it suffices to assume that $\arg \alpha \in [0,\tfrac{\pi}2]$.
Using the asymptotics, (\ref{ai.asym1}) and (\ref{ai.asym2}), and the first error estimate from (\ref{hestimates}),
we have
\begin{equation}\label{Aih.est}
\Bigl|k^{\frac16} \zeta^{\frac14}  \bigl[ \Ai\bigl(k^{\frac23} e^{\frac{2\pi i}3} \zeta\bigr)
+ h_1(k, \alpha, r)\bigr] \Bigr|
\le C e^{k\re \phi}
\end{equation}
If we assume that $|1 + \alpha^2 \sinh^2 r| \ge c$, for some $c>0$, 
then the estimate (\ref{p.upper}) follows immediately from (\ref{p.asym}).

On the other hand, if $|1 + \alpha^2 \sinh^2 r| \le c$, then by the assumption that $r \in [r_0, r_1]$,
we deduce that $|\alpha|$, $|\phi|$, and the ratio $\zeta/[1+\alpha^2 \sinh^2 r]$ are all $O(1)$.
For $|\phi| \ge k^{-1}$, we can use (\ref{Aih.est}) to complete the estimate.  If $|\phi| < k^{-1}$,
then $|\Ai(k^{\frac23} e^{\frac{2\pi i}3} \zeta) + h_1(k, \alpha, r)|$ is bounded by (\ref{hestimates})
and the fact that $\Ai(w)$ is regular at the origin.  (It is only because of this last case that the 
factor $k^{\frac16}$ must be included in the final estimate.)

The argument for (\ref{q.upper}) is essentially identical.
\end{proof}

\bigbreak
Our second application of Proposition~\ref{legPQ.airy} is to control the ratio of Legendre functions.
\begin{corollary}\label{PoverQ}
Assuming that $|k\alpha| \ge 1$, $\vep>0$, and $\arg \alpha \in [0,\tfrac{\pi}2-\vep]$, we have
uniform bounds for $k$ sufficiently large:
$$
\left| \frac{P_{-\frac12+k\alpha}^{-k}(\cosh r)}{\bQ_{-\frac12+k\alpha}^{k}(\cosh r)} \right|
\asymp \left| \frac{\Gamma(k\alpha+1)}{\alpha^{\frac12}\Gamma(k+1)} \right| 
e^{k \re[2\phi(\alpha,r) - p(\alpha) - q(\alpha)]},
$$
meaning the the ratio of the two sides is bounded above and below by constants depending only on $\vep$.
(The upper bound extends to $\arg \alpha = \tfrac{\pi}2$, but the lower bound does not.)
\end{corollary}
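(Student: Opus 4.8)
The plan is to read the estimate off the uniform Airy asymptotics of Proposition~\ref{legPQ.airy}. Dividing (\ref{p.asym}) by (\ref{q.asym}), the common factors $k^{1/6}\zeta^{1/4}\bigl[1+\alpha^2\sinh^2 r\bigr]^{-1/4}$ cancel, and the surviving prefactors combine to give
$$
\left|\frac{P_{-\frac12+k\alpha}^{-k}(\cosh r)}{\bQ_{-\frac12+k\alpha}^{k}(\cosh r)}\right|
= \sqrt{\tfrac{2}{\pi}}\;\left|\frac{\Gamma(k\alpha+1)}{\alpha^{1/2}\Gamma(k+1)}\right|\;
e^{-k\re(p(\alpha)+q(\alpha))}\;
\left|\frac{\Ai(k^{2/3}e^{2\pi i/3}\zeta)+h_1}{\Ai(k^{2/3}\zeta)+h_0}\right|.
$$
Since $\re(2\phi - p - q) = 2\re\phi - \re(p+q)$, the whole statement reduces to showing that the last factor is $\asymp e^{2k\re\phi}$, with constants depending only on $\vep$ (and on the fixed compact interval to which $r$ is restricted).

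To handle the denominator, I would note that throughout the admissible region $\arg\zeta$ lies in $[-\tfrac{2\pi}{3},\tfrac{\pi}{3}]$, so the argument $k^{2/3}\zeta$ never approaches the oscillatory sector of $\Ai$; moreover, because $\arg\alpha\le\tfrac{\pi}{2}-\vep$ and $r$ is confined to a compact subinterval of $(0,\infty)$, the turning point $z_0=\sqrt{1-\alpha^{-2}}$ stays a definite distance from the $z$-interval, so $|\zeta|$ is bounded below. The recessive asymptotic (\ref{ai.asym1}), together with $\zeta^{3/2}=\tfrac{3}{2}\phi$ and the $O(k^{-1})$ error bound for $h_0$ from (\ref{hestimates}), then gives $|k^{1/6}\zeta^{1/4}(\Ai(k^{2/3}\zeta)+h_0)|\asymp e^{-k\re\phi}$ for $k$ large, i.e.\ a lower bound on $|\bQ_{-\frac12+k\alpha}^{k}(\cosh r)|$ matching the upper bound (\ref{q.upper}). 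Combining this lower bound on $|\bQ|$ with the upper bound (\ref{p.upper}) on $|P|$ from Corollary~\ref{PQ.bounds} already yields the claimed \emph{upper} bound for the ratio; since nothing here degenerates as $\arg\alpha\to\tfrac{\pi}{2}$, this half of the statement extends to $\arg\alpha=\tfrac{\pi}{2}$.

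The crux — and the step I expect to take the most care — is the matching lower bound for $|P_{-\frac12+k\alpha}^{-k}(\cosh r)|$. Here the relevant Airy argument is $k^{2/3}e^{2\pi i/3}\zeta$, whose argument equals $\tfrac{2\pi}{3}+\tfrac{2}{3}\arg\phi$; to apply (\ref{ai.asym1}) with a uniform constant I need this to stay bounded away from $\pm\pi$, i.e.\ I need a uniform gap $\arg\phi(\alpha,r)\le\tfrac{\pi}{2}-c(\vep)$. I would establish this from the explicit formula (\ref{phi.def}): tracking the principal branches there, $\arg\phi$ can approach $\tfrac{\pi}{2}$ only in the turning-point limit $\arg\alpha\to\tfrac{\pi}{2}$ (the trajectories of Figure~\ref{phiplot} pinch against the line $\arg\phi=\tfrac{\pi}{2}$), so the restriction $\arg\alpha\le\tfrac{\pi}{2}-\vep$ buys the required gap. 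With the gap in hand, $(e^{2\pi i/3}\zeta)^{3/2}=-\tfrac{3}{2}\phi$ and (\ref{ai.asym1}) give $|k^{1/6}\zeta^{1/4}(\Ai(k^{2/3}e^{2\pi i/3}\zeta)+h_1)|\asymp e^{k\re\phi}$, and dividing by the denominator estimate of the previous paragraph produces a ratio of brackets $\asymp e^{2k\re\phi}$, which combined with the prefactor identity above completes the proof. This argument visibly breaks at $\arg\alpha=\tfrac{\pi}{2}$, where $\Ai(k^{2/3}e^{2\pi i/3}\zeta)$ oscillates through zeros on the negative real axis while the factor $e^{k\re\phi}$ it is being compared to remains bounded — precisely the reason the lower bound cannot be pushed to $\arg\alpha=\tfrac{\pi}{2}$.
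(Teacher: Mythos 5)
Your overall route is the paper's: divide (\ref{p.asym}) by (\ref{q.asym}), observe that the prefactors assemble into $|\Gamma(k\alpha+1)/(\alpha^{1/2}\Gamma(k+1))|\,e^{-k\re(p+q)}$, and reduce everything to showing that the ratio of Airy brackets is $\asymp e^{2k\re\phi}$, using the sector gap $\arg\zeta\le\tfrac{\pi}{3}-\vep_1$ furnished by $\arg\alpha\le\tfrac{\pi}{2}-\vep$ to keep $k^{2/3}e^{2\pi i/3}\zeta$ away from the oscillatory ray. That part, including your diagnosis of exactly where the lower bound fails at $\arg\alpha=\tfrac{\pi}{2}$, is correct and matches the paper.

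There is, however, one step that fails: the claim that because the turning point $z_0=\sqrt{1-\alpha^{-2}}$ stays a definite distance from the $z$-interval, $|\zeta|$ is bounded below, so that (\ref{ai.asym1}) always applies with large argument. The variable $\zeta$ vanishes exactly where $\phi$ vanishes (via $\tfrac23\zeta^{3/2}=\phi$), and $\phi$ has zeros at admissible points that are not turning points: for instance $\phi(1,r)=\log\sinh r$ vanishes at $r=\sinh^{-1}1$, and more generally $\re\phi(\alpha,\cdot)$ increases from $-\infty$ to $+\infty$ by (\ref{dzph}), (\ref{phi.asym1}), (\ref{phi.asym2}), so for a range of admissible $\alpha$ the quantity $k\phi$ can be $O(1)$ or smaller on any fixed $r$-interval. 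In that regime $|k^{2/3}\zeta|$ is bounded, (\ref{ai.asym1}) gives no information (its relative error $O(|w|^{-3/2})$ is not small), and your asymptotic evaluation of both Airy factors breaks down. The paper therefore splits into the two cases $|k\phi|>c$ and $|k\phi|\le c$. The missing case is easy to repair along the paper's lines: when $|k\phi|\le c$ the arguments $k^{2/3}e^{2\pi i\sigma/3}\zeta$ lie in a compact subset of the sector $|\arg w|\le\pi-\tfrac23\vep_1$, on which $\Ai$ is bounded and bounded away from zero (its zeros lie only on the negative real axis), the errors $h_\sigma$ are $O(k^{-1})$ by (\ref{hestimates}), and $e^{2k\re\phi}\asymp1$, so the ratio of brackets is trivially $\asymp e^{2k\re\phi}$. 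You should add this case rather than dismiss it; as written, the dismissal rests on a false inference from the location of the turning point to a lower bound on $|\zeta|$.
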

\begin{proof}
By  (\ref{p.asym}) and (\ref{q.asym}) we have
\begin{equation}\label{PQratio}
\frac{P_{-\frac12+k\alpha}^{-k}(\cosh r)}{\bQ_{-\frac12+k\alpha}^{k}(\cosh r)} =
c\>\frac{\alpha^{\frac12} \Gamma(k\alpha+1)}{\Gamma(k+1)}
\> e^{-k[p(\alpha) + q(\alpha)]}
\>\frac{\Ai\bigl(k^{\frac23} e^{\frac{2\pi i}3} \zeta\bigr) + h_1(k, \alpha, r)}
{\Ai\bigl(k^{\frac23} \zeta\bigr) + h_2(k,\alpha, r)}
\end{equation}
For some $c>0$, consider first the case where $|k \phi| > c$.  
The assumption that $\arg \alpha$ is bounded away from $\tfrac{\pi}2$
implies that $\arg \zeta \in [-\tfrac{2\pi}3, \tfrac{\pi}3 - \vep_1]$, so that we can apply (\ref{ai.asym1})
to estimate both of the Airy functions in  (\ref{PQratio}).  By choosing $c$
sufficiently large, we can assume that the factor $1+ O(|w|^{-3/2})$ appearing in (\ref{ai.asym1}) is bounded away
from zero, since $\tfrac23 |w|^{\frac32} = |k\phi|$ in our case.  By the estimates (\ref{hestimates}) and
the assumption $|k\alpha| \ge 1$, by choosing $k$ sufficiently large we can assume that $|h_1|$ and $|h_2|$
are arbitrarily small relative to the Airy function estimates.  Under these assumptions we have
$$
\left| \frac{\Ai\bigl(k^{\frac23} e^{\frac{2\pi i}3} \zeta\bigr) + h_1(k, \alpha, r)}
{\Ai\bigl(k^{\frac23} \zeta\bigr) + h_2(k,\alpha, r)} \right| \asymp e^{2k\re \phi(\alpha,r)}.
$$
The bound then follows immediately.

If $|k \phi| \le c$, then because $\Ai(w)$ is non-zero near the origin, the bound follows immediately 
from (\ref{p.asym}), (\ref{q.asym}), and (\ref{hestimates}), provided that
$k$ is sufficiently large.
\end{proof}


\end{document}